\documentclass[11pt]{article}

\headheight=8pt     \topmargin=0pt \textheight=624pt
\textwidth=432pt \oddsidemargin=18pt \evensidemargin=18pt

\usepackage{authblk}
\usepackage{amsfonts}
\usepackage{amsmath,amssymb}
\usepackage{amscd}
\usepackage{amsthm}
\usepackage{fancyhdr}
\usepackage{color}
\usepackage{hyperref} 
\usepackage{tikz-cd}
\usepackage[all,poly,web,knot]{xy}
\usepackage{fdsymbol}
\usepackage{float}

\theoremstyle{plain}
\newtheorem{lem}{Lemma}[section]
\newtheorem{thm}[lem]{Theorem}

\newtheorem*{thmnonumber}{Theorem}
\newtheorem{cor}[lem]{Corollary} 
\newtheorem{prop}[lem]{Proposition} 
\newtheorem*{propnonumber}{Proposition}


\theoremstyle{definition}
\newtheorem{defi}[lem]{Definition}  
\newtheorem{ex}[lem]{Example} 
 
\theoremstyle{remark}
\newtheorem{rem}[lem]{Remark}


\newcommand{\cev}[1]{\reflectbox{\ensuremath{\vec{\reflectbox{\ensuremath{#1}}}}}}

\newcommand{\ZZ}{\ensuremath{\mathbb Z}}

\newcommand{\RR}{\ensuremath{\mathbb R}}

\newcommand{\CH}{\mathcal{H}}
\newcommand{\CS}{\mathcal{S}}
\newcommand{\CR}{\mathcal{R}}
\newcommand{\cB}{\mathcal{B}}

\newcommand{\cF}{\mathcal{F}}
\newcommand{\cG}{\mathcal{G}}

\newcommand{\cS}{\mathcal{S}}
\newcommand{\cO}{\mathcal{O}}

\newcommand{\cH}{\mathcal{H}}
\newcommand{\CK}{\mathcal{K}}

\newcommand{\g}{\mathfrak{g}}
\newcommand{\h}{\mathfrak{h}}

\newcommand{\CI}{C^\infty}
\newcommand{\CX}{\mathfrak{X}}

\newcommand{\SEC}{\Gamma}
\newcommand{\CF}{\mathcal{F}}
\newcommand{\CU}{\mathcal{U}}
\newcommand{\CG}{\mathcal{G}}
\newcommand{\Ho}{\mathcal{H}}

\newcommand{\fto}{\rightarrow}

\newcommand{\st}{\hspace{.05in}:\hspace{.05in}}
\newcommand{\y}{\hspace{.08in}\text{and} \hspace{.08in}}
\newcommand{\soutar}{\rightrightarrows}



\newcommand{\bt}{\mathbf{t}}                  
\newcommand{\bs}{\mathbf{s}}                  
\newcommand{\pd}[1]{\frac{\partial}{\partial #1}} 
\newcommand{\vX}{\mathfrak{X}} 

\setcounter{tocdepth}{2}

\title{Quotients of singular foliations\\
{and Lie 2-group actions}}

\author{Alfonso Garmendia and Marco Zambon\footnote{KU Leuven, Department of Mathematics, Celestijnenlaan 200B box 2400, BE-3001 Leuven, Belgium. \texttt{alfonso.garmendia@kuleuven.be}, \texttt{marco.zambon@kuleuven.be}}}

\begin{document}

\date{}

\maketitle

\begin{abstract}
Every singular foliation has an associated topological groupoid, called holonomy groupoid \cite{AndrSk}. In this note  
we exhibit some functorial properties of this assignment:  if a foliated manifold $(M,\mathcal{F}_M)$ is the quotient of a foliated manifold $(P,\mathcal{F}_P)$ {along} a surjective submersion with connected fibers, then the same is true for the corresponding holonomy groupoids. For quotients by a Lie group action, an {analogue} statement holds under suitable assumptions, yielding a Lie 2-group action on the holonomy groupoid.
	
\end{abstract}
\tableofcontents

 \section*{{Introduction}}
 \addcontentsline{toc}{section}{Introduction}

The space of leaves of a foliation is typically not smooth, and might fail to be Hausdorff. As a replacement for the leaf space one often takes a smooth group-like object canonically associated to the foliation, namely the holonomy Lie groupoid, and declares two Lie groupoids to model the same  space if they  are Morita equivalent. 

\emph{Singular foliations} extend the classical notion of foliation by allowing singularities. In this note we will adopt the definition of singular foliation that appears in \cite{AndrSk} and is inspired by the work of Stefan and Sussmann (among others) in the 1970's. It entails not only a smooth partition of the underlying manifold in immersed submanifolds of varying dimension, but it also contains information about the dynamics along the leaves, i.e. the ways that one can flow along them. This notion of singular foliation allows for Lie-theoretic constructions. The most prominent of them is the canonical assignment of a topological groupoid by Androulidakis-Skandalis \cite{AndrSk}, which is called \emph{holonomy groupoid} since in the case of (regular) foliations it recovers the holonomy Lie groupoid mentioned above.

 The assignment of the holonomy groupoid to a singular foliation satisfies  functoriality properties: under certain conditions, a map $\pi$ between foliated manifolds induces canonically a morphism of holonomy groupoids, which can be regarded as a replacement for the induced map of leaf spaces.
In this paper we prove properties of this assignment when  $\pi$ satisfies a surjectivity property, and thus can be regarded as a quotient map. {We do so motivated by the desire to establish to which extent the construction of the holonomy groupoid satisfies functorial properties.} {Since every holonomy groupoid has an associated a $C^*$-algebra \cite{AndrSk}, this is relevant also in non-commutative geometry,    and  the natural appearance of higher Lie groupoids in our work seems an interesting phenomenon in that context too.}

 We now outline the main results.
 
  \paragraph{\bf Statement of results}
 Let $\cF$ be a singular foliation on a manifold $P$, and  
let $\pi \colon P \to M$ be a {surjective} submersion with connected fibers.
Under mild invariance conditions, $\cF$ can be pushed forward along $\pi$ to a singular foliation $\cF_M$ on $M$. 
Since the foliation $\cF_M$ is obtained from $\cF$ by a quotient procedure, it is natural to wonder whether the 
holonomy groupoid $\Ho(\cF_M)$  is also quotient of the holonomy groupoid $\Ho(\cF)$. We show that this is always the case (see Thm. \ref{thm:sur.hol}):
\begin{thmnonumber}
The map $\pi$ induces a canonical {open} surjective morphism $$\Xi \colon \Ho(\cF)\to \Ho(\cF_M).$$
\end{thmnonumber}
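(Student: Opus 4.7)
The plan is to use the bi-submersion atlas presentation of the holonomy groupoid from \cite{AndrSk} and to define $\Xi$ by post-composing the source and target maps of every bi-submersion with $\pi$.

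First, I would recall that $\Ho(\cF)$ is a quotient of an atlas of bi-submersions $(U, \bs, \bt)$ for $\cF$, where $\bs, \bt : U \to P$ are submersions satisfying the compatibility $\bs^{-1}(\cF) = \bt^{-1}(\cF) = \SEC_c(\ker d\bs) + \SEC_c(\ker d\bt)$, and equivalence of points is generated by local morphisms of bi-submersions. I would then verify the key construction: if $(U, \bs, \bt)$ is a bi-submersion for $\cF$, then $(U, \pi \circ \bs, \pi \circ \bt)$ is a bi-submersion for $\cF_M$. The source and target maps are submersions by composition, and the pullback identity transfers using the hypothesis that $\cF$ pushes forward to $\cF_M$ (so that $\pi^{-1}(\cF_M) = \cF$, since the mild invariance assumption gives $\SEC_c(\ker d\pi) \subset \cF$) together with the observation that $\ker d(\pi \circ \bs) = \ker d\bs + \bs^{*}(\ker d\pi)$ and similarly for $\bt$.

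Next, I would set $\Xi([u]_\cF) := [u]_{\cF_M}$, with the right-hand side computed in the pushed-forward bi-submersion, and check well-definedness and the groupoid-morphism property by post-composing with $\pi$ every local morphism, identity, composition and inverse of bi-submersions for $\cF$: each of these operations transfers tautologically to the pushed-forward bi-submersions. For surjectivity, any class in $\Ho(\cF_M)$ is locally represented by a path-holonomy bi-submersion associated to a finite system of local generators of $\cF_M$ near some $m \in M$; these generators lift, via $\pi_{*} \cF = \cF_M$ and the submersion property of $\pi$, to local generators of $\cF$ near a chosen preimage in $\pi^{-1}(m)$, and the corresponding path-holonomy bi-submersion for $\cF$ is sent by $\Xi$ to the given class. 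Openness of $\Xi$ follows because on every bi-submersion $(U, \bs, \bt)$ in the atlas the construction underlying $\Xi$ leaves $U$ unchanged, so at the level of atlases the map is the identity of $U$, which is open, and this openness descends to the atlas-quotient topologies.

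The step I expect to be the main obstacle is making these constructions coherent across different preimages in a fiber of $\pi$: two lifts of the same bi-submersion for $\cF_M$ chosen over different points of $\pi^{-1}(m)$ must be shown to project to the same class under $\Xi$, and this is precisely where the connectedness of fibers of $\pi$, combined with $\SEC_c(\ker d\pi) \subset \cF$, is crucial, as it supplies the bi-submersions for $\cF$ that link different preimages within a single fiber and whose image under $\Xi$ is trivial in $\Ho(\cF_M)$.
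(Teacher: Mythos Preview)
Your proposal has a genuine gap rooted in a misreading of the hypothesis. The ``mild invariance condition'' is \emph{not} $\Gamma_c(\ker d\pi)\subset\cF$; it is the weaker bracket condition $[\Gamma_c(\ker d\pi),\cF]\subset \Gamma_c(\ker d\pi)+\cF$ (Thm.~\ref{thm:sur.hol}). In general $\pi^{-1}(\cF_M)=\cF+\Gamma_c(\ker d\pi)$, which strictly contains $\cF$. Under the actual hypothesis your key claim---that every bisubmersion $(U,\bt,\bs)$ for $\cF$ yields a bisubmersion $(U,\pi\circ\bt,\pi\circ\bs)$ for $\cF_M$---is false. The paper gives an explicit counterexample (the Remark following Lemma~\ref{prop:carry.diffM}): take $P=\RR^2$, $\cF=0$, $\pi=$ first projection, and any diffeomorphism $\phi$ of $P$ not preserving the fibres; then $(P,\mathrm{Id},\phi)$ is a bisubmersion for $\cF=0$, yet $(P,\pi,\pi\circ\phi)$ is not one for $\cF_M=0$. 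Note that $\cF=0$ does satisfy the bracket condition, so this lies squarely inside the theorem's scope.

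What the paper does instead is prove (Lemma~\ref{lem:projgen2}) that $\cF$ is locally generated by $\pi$-projectable vector fields in $\widehat{\cF}$---this is where the bracket condition is really used---and then build a \emph{specific} path-holonomy atlas from such projectable generators (Lemma~\ref{prop:carry.diffM}). For bisubmersions in that atlas one can verify the push-forward property, essentially because there is a morphism to a path-holonomy bisubmersion for $\cF_M$ built from the projected generators; the computation showing $(\pi\circ\bs)^{-1}\cF_M=\ker_c(d(\pi\circ\bs))+\ker_c(d(\pi\circ\bt))$ is then carried out in that lemma. Your map $\Xi$, well-definedness, openness, and the surjectivity-via-source-connectedness are then handled as in Prop.~\ref{prop:morphfol}, much along the lines you sketched. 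In short: your overall architecture is right, but the construction only goes through on a carefully chosen atlas, and producing that atlas (via projectable generators) is the missing substantive step. Your final paragraph about coherence across $\pi$-fibres is not where the difficulty lies; connectedness of fibres enters elsewhere (e.g.\ in $\pi^{-1}\cF_M=\cF^{\text{big}}$ and in source-connectedness arguments), not in reconciling lifts for surjectivity.
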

\noindent
 We {emphasize} that this is a statement about (typically not source simply connected) topological groupoids. It is an analogue of the following fact in Lie groupoid theory: let $A, B$ be  integrable Lie algebroids,  and $\cG, \cH$ the source simply connected Lie groupoids integrating them.
Given a morphism of integrable Lie algebroids $\phi\colon A\to B$ which is fiberwise surjective and covers a surjective submersion between the manifolds of objects\footnote{Such a morphism is called a Lie algebroid fibration in \cite{MK2}.}, there is a unique Lie groupoid morphism $\Phi\colon \cG \to \cH$ integrating $\phi$, and further $\Phi$ is a\footnote{{$\Phi$ may fail to be a Lie groupoid fibration, see for instance Ex. \ref{ex:fail}.}}surjective submersion.

 We then refine the above result in the case of Lie group actions. That is, we suppose that $\pi \colon P \to M=P/G$ is the quotient map of the action of a Lie 
  group $G$   on $P$, which we assume to be  free, proper, and
   preserving the singular foliation $\cF$. The action lifts naturally to a $G$-action on $\Ho(\cF)$, but a simple dimension count shows that the quotient can not be isomorphic to $\Ho(\cF_M)$ in general. In \S\ref{sec:quotgrouppull} we show that 
when  $\cF$ contains the infinitesimal generators of the $G$-action,
there is a natural action of a semidirect product Lie group $G \rtimes G$ on $\Ho(\cF)$ -- not by Lie groupoid automorphisms -- with quotient $\Ho(\cF_M)$. 
Remarkably, this is a Lie 2-group action (see Thm.
\ref{thm:act.q.fol}). In other words:
\begin{thmnonumber}
When  $\cF$ contains the infinitesimal generators of the $G$-action, the induced morphism $\Xi$ is the quotient map of a Lie group action in the category of {topological} groupoids.
\end{thmnonumber}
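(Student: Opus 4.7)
The plan is to construct the $G \rtimes G$-action on $\Ho(\cF)$ as a Lie 2-group acting on a topological groupoid, and then identify $\Xi$ with its quotient map. The action combines two $G$-actions. The first is the canonical lift of the $G$-action on $P$ to an action by groupoid automorphisms of $\Ho(\cF)$, available because $G$ preserves $\cF$. The second uses the hypothesis that $\cF$ contains the infinitesimal generators of the action: each element of $\mathfrak{g}$ integrates to a leafwise flow, producing for every $g \in G$ a bisection $b_g$ of $\Ho(\cF)$, and right multiplication by $b_g$ gives a second action, which translates sources by $g$ but fixes targets. The failure of these two actions to commute is controlled by the adjoint representation, assembling them into an action of the semidirect product $G \rtimes G$. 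Viewing $G \rtimes G \rightrightarrows G$ as a strict Lie 2-group, this combined action qualifies as a Lie 2-group action because groupoid automorphisms preserve multiplication, and multiplication by a fixed bisection is equivariant with respect to composition on the opposite side.

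By the preceding theorem, $\Xi$ is a continuous, open surjection. Both $G$-actions preserve its fibers: the first because $\pi$ is $G$-invariant, and the second because each $b_g$ translates only within $\pi$-fibers, hence becomes an identity bisection in $\Ho(\cF_M)$. Hence $\Xi$ factors through a continuous open surjection $\Ho(\cF)/(G \rtimes G) \to \Ho(\cF_M)$. What remains is to show that this factored map is injective, i.e., that the $G \rtimes G$-orbits in $\Ho(\cF)$ are exactly the fibers of $\Xi$; once this holds, openness of $\Xi$ upgrades the bijection to a homeomorphism, and a dimension count (matching the orbit dimension $2\dim G$ with the fiber dimension of $\Xi$) confirms that the action is free.

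The main obstacle is this fiberwise injectivity. The approach is to work with the local bisubmersion descriptions of $\Ho(\cF)$ and $\Ho(\cF_M)$ from \cite{AndrSk}: given two leafwise paths in $\cF$ whose classes coincide in $\Ho(\cF_M)$, one lifts a witnessing equivalence in a bisubmersion for $\cF_M$ to a near-equivalence in a bisubmersion for $\cF$, and measures the endpoint discrepancies in the $\pi$-fibers by elements of $G$ using the local freeness of the $G$-action. Because the source correction and the target correction are a priori independent, the acting group must be the full $G \rtimes G$ rather than a diagonal copy of $G$; this is exactly the structural feature reflected in the theorem statement, and pinning down the two correcting elements as a genuine element of the semidirect product is where most of the technical work will go.
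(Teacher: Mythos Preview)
Your construction of the $G\rtimes G$-action is correct and in fact appears in the paper as the ``alternative description'' in \S\ref{subsec:alternative} (Prop.~\ref{prop:Lie.2.grpACTION}): the lifted action $\vec{\star}$ by groupoid automorphisms, combined with left multiplication $\cev{\star}$ by the image of the canonical morphism $\phi\colon G\times P\to \Ho(\cF)$ (your bisections $b_g$). So the first half of your proposal matches the paper's secondary treatment, modulo a left/right convention.

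However, the paper's \emph{primary} proof (Thm.~\ref{thm:act.q.fol} and Prop.~\ref{prop:afterthm:act.q.fol}) takes a much shorter route that you do not mention: it invokes the canonical isomorphism $\varphi\colon \Ho(\cF)\xrightarrow{\sim}\pi^{-1}(\Ho(\cF_M))=P\times_M \Ho(\cF_M)\times_M P$ from Thm.~\ref{thm:Homeo}, valid precisely because $\cF$ is a pullback foliation. On the pullback groupoid the action is simply $(h,g)*(p,[v],q)=(hgp,[v],gq)$, which is transparently a Lie 2-group action, and $\Xi$ corresponds to the middle projection $pr_2$ (Prop.~\ref{prop:normalq}). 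The identification ``orbits $=$ $\Xi$-fibers'' is then immediate from the formula, with no bisubmersion analysis needed.

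The part of your proposal that is genuinely underspecified is the fiberwise injectivity. Your plan to lift a bisubmersion equivalence for $\cF_M$ to a ``near-equivalence'' for $\cF$ and read off two correcting elements of $G$ is plausible but vague; making it precise amounts to proving that $\ker(\Xi)=\phi(G\times P)$, and the paper establishes exactly this (Rem.~\ref{rem:imker}) \emph{using} the isomorphism $\varphi$. So without the pullback-groupoid isomorphism you would need an independent argument for $\ker(\Xi)$, and your sketch does not supply one. The cleanest fix is to import Thm.~\ref{thm:Homeo}; once you have it, both the action and the orbit identification become one-line computations.
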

\noindent {We expect the above conclusion to hold in greater generality, namely under a  regularity condition on the intersection of $\cF$ with the   foliation generated by the  $G$-action on $P$. We hope to address this in a future paper.}

 In the general case that $\cF$ does not necessarily contain the infinitesimal generators of the $G$-action, 
 {the fibers of $\Xi$ coincide with the orbits of a  \emph{groupoid} action on $\Ho(\cF)$, which we describe in Prop. \ref{cor:grpd.act.hol}.} 

We also obtain a canonical Lie 2-group action, whose orbits however may be smaller than the fibers of $\Xi$ (see Prop. \ref{prop:Lie.2.grpACTION}  and Cor. \ref{prop:Lie.2.grpFIBER}):

\begin{propnonumber} 
There is a canonical Lie ideal $\h$ of $\g$ which gives rise to a Lie 2-group $H \rtimes G$ and a Lie 2-group action on $\Ho(\cF)$,
whose orbits are contained  in the $\Xi$-fibers 
\end{propnonumber}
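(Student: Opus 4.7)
The plan is to identify a canonical Lie ideal $\h \subseteq \g$ consisting of those infinitesimal generators of the $G$-action that already lie in $\cF$, then form the corresponding normal subgroup $H$ of $G$, and bootstrap the construction behind Thm.~\ref{thm:act.q.fol} to it. Concretely, I would define
\[
\h := \{X \in \g : X_P \in \cF\},
\]
where $X_P \in \vX(P)$ is the fundamental vector field of $X$. Since $\cF$ is closed under the Lie bracket and $X \mapsto X_P$ is a Lie algebra (anti-)homomorphism, $\h$ is a Lie subalgebra. For $Y \in \g$ and $X \in \h$, the $G$-invariance of $\cF$ implies that $\mathcal{L}_{Y_P}$ preserves $\cF$, so $[Y_P, X_P] \in \cF$, whence $[Y,X] \in \h$; thus $\h$ is an ideal. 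Let $H \subseteq G$ be the connected immersed Lie subgroup integrating $\h$, which is normal in $G$. The inclusion $\phi : H \hookrightarrow G$ together with the conjugation action of $G$ on $H$ forms a strict Lie crossed module, encoding the Lie 2-group with object manifold $G$ and morphism manifold $H \rtimes G$, source $(h,g) \mapsto g$ and target $(h,g) \mapsto hg$.

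Next I would construct the action on $\Ho(\cF)$. The $G$-action on $P$ preserves $\cF$ and lifts canonically to a $G$-action on $\Ho(\cF)$ by groupoid automorphisms, giving the 2-group action at the object level. For the morphism level, I would isolate from the proof of Thm.~\ref{thm:act.q.fol} the sub-construction that turns the hypothesis ``$\cF$ contains the infinitesimal generators'' into a homomorphism from the acting group into the bisection group of $\Ho(\cF)$; applied with $H$ in place of $G$, this produces the $H$-factor of the action. Promoting the pair to a genuine Lie 2-group action then reduces to verifying that conjugating by $g \in G$ sends the bisection attached to $X \in \h$ to the bisection attached to $\mathrm{Ad}(g)X$, which is exactly the compatibility encoded in the crossed module and follows from $G$-invariance of $\cF$ together with naturality of the bisection construction.

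To see that orbits lie in $\Xi$-fibers, note that the $G$-action on $P$ descends to the trivial action on $M = P/G$, so by naturality both the $G$- and $H$-parts of the action descend to the identity on $\Ho(\cF_M)$; hence $\Xi$ is equivariant with respect to the trivial $H \rtimes G$-action on its target, forcing each orbit into a single fiber.

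The main obstacle will be the construction of the action when $H$ is not closed in $G$: then the $H$-action on $P$ is neither free nor proper, so Thm.~\ref{thm:act.q.fol} cannot be invoked as a black box. The delicate point is to extract the bisection-valued piece from that construction and verify smoothness plus the semidirect compatibility globally, which I expect to rest essentially on the $G$-equivariance inherited from $G$-invariance of $\cF$.
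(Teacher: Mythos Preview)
Your outline is close to the paper's approach, but there are a few points where it diverges or needs correction.

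First, a technical slip: since $\cF\subset\vX_c(P)$ consists of compactly supported vector fields, a nonzero fundamental vector field $X_P$ will almost never lie in $\cF$ when $P$ is non-compact. The paper (Lemma~\ref{lem:ideal}) defines $\h=\{x\in\g: v_x\in\widehat{\cF}\}$ using the global hull (Def.~\ref{defi:globalhull}); the ideal argument is then the same as yours.

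Second, your plan for the $H$-part of the action --- ``isolating from the proof of Thm.~\ref{thm:act.q.fol}'' --- is not what the paper does and would not work as stated. That theorem relies on the isomorphism $\Ho(\cF)\cong\pi^{-1}(\Ho(\cF_M))$ of Thm.~\ref{thm:Homeo}, which requires $\Gamma_c(\ker d\pi)\subset\cF$ and is unavailable in the general case. Instead the paper constructs a groupoid morphism $\phi\colon H\times P\to\Ho(\cF)$ directly (Lemma~\ref{lem:phi}): since the $H$-action is free (inherited from $G$), the transformation groupoid $H\times P$ is itself the holonomy groupoid of the $H$-orbit foliation $\cF_H\subset\cF$, and $\phi(h,p)$ is characterized as the unique element of $\Ho(\cF)$ carrying the diffeomorphism $\widehat{h}$ near $p$. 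One then sets $h\cev{\star}\xi:=\phi(h,\bt(\xi))\circ\xi$ and $(h,g)\star\xi:=h\cev{\star}(g\vec{\star}\xi)$. This also means your anticipated obstacle --- that $H$ may fail to be closed and hence the $H$-action on $P$ not proper --- never arises: only freeness is used, and nothing is quotiented by $H$.

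Third, verifying that $\star$ is a Lie 2-group action requires more than the $G$-equivariance of $\phi$ you mention (which is fact i) in the proof of Prop.~\ref{prop:Lie.2.grpACTION}). One also has to check that $\star$ is a groupoid morphism, and for that the key identity is $h\vec{\star}\xi=\phi(h,\bt(\xi))\circ\xi\circ\phi(h^{-1},h\bs(\xi))$ (fact ii) there). Both identities are established by comparing carried diffeomorphisms and invoking Prop.~\ref{prop:equiv2}; this is the recurring tool you should make explicit.

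Finally, the orbits-in-fibers statement is proved essentially as you suggest: $\vec{\star}$ preserves $\Xi$-fibers (Lemma~\ref{lem:orbits}), and $\phi(H\times P)\subset\ker(\Xi)$ (Lemma~\ref{lem:Lie.2.grpFIBER}) because each $\widehat{h}$ preserves the $\pi$-fibers, so the diffeomorphism carried by $\Xi(\phi(h,p))$ is the identity on $M$.
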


\noindent The concrete form of this Lie 2-group action is inspired   by the special case in which $\cF$ contains the infinitesimal generators of the $G$-action (hence $H=G$). Indeed in that case we recover  the Lie 2-group action given in \S\ref{sec:quotgrouppull}.

{We now return to the general set-up of the first theorem (in particular, the surjective submersion $\pi \colon P \to M$ does not necessarily arise from a Lie group action).
In \S\ref{section:groidfib} we address the question of when the open surjective morphism $\Xi \colon \Ho(\cF)\to \Ho(\cF_M)$ obtained in the first theorem is a fibration. This is useful because
when $\Xi$ is a fibration of Lie groupoids \cite{MK2}, it allows to describe the holonomy groupoid $\Ho(\cF_M)$  without knowing $\cF_M$. Namely, $\Ho(\cF_M)$ is the quotient of $\Ho(\cF)$ by a normal subgroupoid system; the latter -- as we explain just before Ex. \ref{ex:quotsim} -- is a set of data defined directly and explicitly in terms of $\Ho(\cF)$ and the projection $\pi\colon P\to M$. There is also a notion of fibration of topological groupoids \cite{BussMeyer}, which however appears to be less useful for the purpose of describing the quotient groupoid.}

{
We summarize as follows Ex. \ref{ex:fail}, Prop. \ref{prop:pullbackfibr}, Prop. \ref{prop:123fibr} and Prop. \ref{prop:norsubG}:
\begin{propnonumber} 
\begin{itemize}
\item[i)] The  open surjective morphism $\Xi \colon \Ho(\cF)\to \Ho(\cF_M)$ generally fails to be a fibration of topological groupoids. 
In the smooth case, $\Xi$ generally fails to be a fibration of Lie groupoids.
\item[ii)] Suppose that $\cF$ is the pullback foliation of $\CF_M$. Then the morphism $\Xi$ is a fibration of topological groupoids. 
In the smooth case, $\Xi$ is a fibration of Lie groupoids.
\item[iii)] Suppose that $\pi$ is the quotient map of a free, proper $G$-action on $P$  which
   preserves $\cF$. Then the morphism $\Xi$ is a fibration of topological groupoids, provided a technical condition is satisfied.  In the smooth case, $\Xi$ is always a fibration of Lie groupoids.
\end{itemize}
\end{propnonumber}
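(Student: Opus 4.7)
The plan is to address the three parts in turn, since they have very different flavors.

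For part (i), I would produce a concrete counterexample. The cleanest candidates take $P = M \times F$ with $F$ connected and $\pi$ the projection, choosing $\cF$ to contain some but not all vertical vector fields together with some partially descending horizontal vector fields. One then computes $\Ho(\cF)$ and $\Ho(\cF_M)$ directly and exhibits either an arrow of $\Ho(\cF_M)$ lacking a continuous local lift (topological case) or a point at which the source-fiber restriction of $\Xi$ fails to be a submersion (smooth case). Arranging an example in which $\Ho(\cF)$ is itself a smooth Lie groupoid yet the fibration still fails is the delicate part.

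For part (ii), I would invoke the standard identification $\Ho(\pi^{-1}\cF_M) \cong P \times_M \Ho(\cF_M) \times_M P$ (the pullback groupoid), which is part of the general theory of pullback foliations. Under this identification $\Xi$ is projection onto the middle factor. The source fiber of the pullback over $p_0 \in P$ equals $\{p_0\} \times \{h \in \Ho(\cF_M) : s(h) = \pi(p_0)\} \times_M P$, and $\Xi$ restricts to the projection forgetting the third factor; its fiber over $h$ is $\pi^{-1}(t(h))$, a submanifold since $\pi$ is a surjective submersion with connected fibers. Hence $\Xi$ is a surjective submersion on source fibers, yielding the Lie groupoid fibration; the topological statement follows by the same argument using local triviality of $\pi$.

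For part (iii), I would use the Lie 2-group action machinery from \S\ref{sec:quotgrouppull} and Prop. \ref{prop:Lie.2.grpACTION}. In the smooth case, the appropriate semidirect product (either $G \rtimes G$ when $\cF$ contains the infinitesimal generators, or the general $H \rtimes G$) acts smoothly on $\Ho(\cF)$ with orbits contained in $\Xi$-fibers; freeness and properness of the action on $P$ transfer to the action on $\Ho(\cF)$, so the orbit projection is a surjective submersion, which combined with $\pi$ on objects yields a Lie groupoid fibration. In the topological setting, the technical condition should guarantee that $\Xi$-fibers coincide with Lie 2-group orbits (noting that Cor. \ref{prop:Lie.2.grpFIBER} only asserts containment in general); granting this, the quotient argument runs topologically.

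The principal obstacle is part (i): constructing a counterexample in which $\Ho(\cF)$ remains smooth yet the fibration condition fails requires balancing several singularity phenomena against explicit holonomy computations. Parts (ii) and (iii) are structural, with (ii) following from the pullback description of holonomy and (iii) being a fibered-quotient argument that is standard once the Lie 2-group action has been exhibited.
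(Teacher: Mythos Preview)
Your approach to part (ii) matches the paper's: invoke the canonical isomorphism $\Ho(\pi^{-1}\cF_M)\cong \pi^{-1}(\Ho(\cF_M))$ (Thm.~\ref{thm:Homeo}) and the identification of $\Xi$ with the middle projection (Prop.~\ref{prop:normalq}); the fibration property is then immediate.

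There are genuine gaps in parts (i) and (iii), however.

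For part (i), you have only sketched a strategy, not a counterexample, and the strategy you propose (mixing partial vertical and descending horizontal vector fields) is more delicate than necessary. The paper's example (Ex.~\ref{ex:fail}) is elementary: take $P=\RR^2\setminus(\{0\}\times\RR_+)$, $M=\RR$, $\pi$ the first projection, and $\cF$ the regular foliation by horizontal lines and half-lines. Both holonomy groupoids are smooth Lie groupoids (indeed pair groupoids of leaves), yet $\Xi_\bs$ fails to be \emph{surjective}: an arrow $\zeta\in\Ho(\cF_M)=M\times M$ with $\bs(\zeta)=1$, $\bt(\zeta)=-1$ cannot be lifted with prescribed source $(1,y)$ for $y>0$, because the leaf through $(1,y)$ never reaches the half-plane $\{x<0\}$. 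The point is that the obstruction is purely set-theoretic (lack of surjectivity), not a failure of smoothness or continuity of local lifts.

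For part (iii), your proposed argument via the Lie 2-group action of $H\rtimes G$ does not work. The orbits of this action are only \emph{contained} in the $\Xi$-fibers (Cor.~\ref{prop:Lie.2.grpFIBER}), and in general the containment is strict; the technical condition of Prop.~\ref{prop:123fibr} is \emph{not} a condition making orbits and fibers coincide. Quotienting by the Lie 2-group action therefore does not produce $\Ho(\cF_M)$, and says nothing about $\Xi$ being a fibration. The paper proceeds quite differently. For the topological statement it uses only the lifted $G$-action $\vec{\star}$ by groupoid automorphisms: surjectivity of $\Xi_\bs$ follows because any lift $\xi$ of $\zeta$ can be translated by the unique $g\in G$ moving $\bs(\xi)$ to the prescribed point; openness requires a careful bisubmersion argument, and the technical condition (that $(g,p)\mapsto(g,\hat g(p))$ preserves $0\times\cF$ on $G\times P$) is used precisely to build a \emph{smooth family} $g\mapsto\phi^g$ of bisubmersion morphisms $gU\to U$, which is what makes the image of $\Xi_\bs$ on a suitable neighborhood open. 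For the smooth statement (Prop.~\ref{prop:norsubG}) the paper does not quotient by any Lie 2-group; instead it verifies directly that $(\CK,R,\theta)$, with $\CK=\ker\Xi$, $R=P\times_M P$, and $\theta$ induced by $\vec{\star}$, is a normal subgroupoid system in the sense of Mackenzie, and then invokes Thm.~\ref{thm:norm.sub.sys} together with the description of the $\Xi$-fibers in Lemma~\ref{prop:fib.xi0} to conclude that $\Xi$ is the associated fibration.
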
 
}

\paragraph{\bf Conventions and acknowledgements} {Given a groupoid $\cG$ with space of objects $P$, its source map is denoted by $\bs \colon \cG\to P$, its target map by $\bt \colon \cG\to P$, and its product (multiplication) $\cG {{}_{\bs}\!\times_{\bt}} \cG\to \cG$ is denoted by $\circ$.}

{We thank the referee for suggestions that improved the content of paper.}
 We acknowledge partial support by the long term structural funding -- Methusalem grant of the Flemish Government, the FWO under EOS project G0H4518N, the FWO research project G083118N (Belgium).

\section{{Quotients of foliations by surjective submersions}}
{
We start reviewing singular foliations in the sense of \cite{AndrSk} and the topological groupoids associated to them. As recalled in \S\ref{subsec:main}, given  a {surjective} submersion with connected fibers $P \to M$, an ``invariant'' singular foliation $\cF$ on $P$ induces  a singular foliation $\cF_M$ on $M$, which can be regarded as a quotient of the former.
The main statement of the paper is that the holonomy groupoid of $\cF_M$ is a quotient of the 
holonomy groupoid of $\cF$, see Thm. \ref{thm:sur.hol}.
In \S\ref{subsec:char} we give an explicit characterization of the quotient map when $\cF$ is a  pullback-foliation.}

\subsection{{Background on singular foliations and holonomy groupoids}}

We review first the notions of singular foliation and holonomy groupoid from the work \cite{AndrSk} by  Androulidakis-Skandalis.

\subsubsection*{{Singular foliations}}

\begin{defi}\label{def:singfol}
	A \textbf{singular foliation} on a manifold $P$ is a $\CI(P)$-submodule $\CF$
	{of the compactly supported vector fields}
	$\CX_c(P)$, closed under the Lie bracket and locally finitely generated.
	A \textbf{foliated manifold} is a manifold with a singular foliation.
\end{defi}
\begin{rem}\label{rem:loc.gen} Let $P$ be a manifold and $\CF$ a submodule of $\CX_c(P)$. Take an open set $U\subset P$ and consider
	$$\iota_U^{-1}\CF:=\{ X|_U \st X\in \CF \y \text{supp}(X)\subset U\}.$$
	The module $\CF$ is {\bf locally finitely generated} if for every point of $P$ there is an open neighborhood $U$ and finitely many vector fields $X_1,\dots,X_n\in \CX(U)$ such that $\iota_U^{-1}\CF$ is 
{$Span_{{C_c^{\infty}(U)}} \{	X_1,\dots, X_n\}$}.
\end{rem}

Any singular foliation gives rise to a singular distribution that satisfies the assumptions of the  Stefan-Sussmann theorem; therefore, it induces a partition of the manifold into immersed submanifolds called leaves.  
\begin{ex}
	{i) Given an involutive regular distribution $D\subset TP$, which corresponds to a regular foliation by the Frobenius theorem, we obtain a singular foliation $\CF:=\Gamma_c(D)$.}
	
	{ii) If $A$ is a Lie algebroid over $P$ with anchor $\sharp\colon A\fto TP$, then $\sharp(\Gamma_c(A))$ is a singular foliation.}
\end{ex}

{The following  vector spaces  measure the regularity of a singular foliation at a given point.}
\begin{defi} Let $(P,\CF)$ be a foliated manifold and $p\in P$. Denote $I_p:=\{f\in \CI(P) \st f(p)=0\}$.  
	
	The {\bf tangent of $\CF$} at $p$ is $F_p:=\{X(p) \st X\in \CF\}\subset T_p P$.
	
	The {\bf fibre of $\CF$} at $p$ is $\CF_p:= \CF/I_p\CF$.
\end{defi}
If $\mathrm{dim}(F_p)$ is constant then $\CF$ is a regular foliation. If $\mathrm{dim}(\CF_p)$ is constant then $\CF$ is a {proyective} module and it is isomorphic to the sections {of} a vector bundle.

\begin{defi}\label{def:pullback}
	Let $(M,\CF_M)$ be a foliated manifold  and  $\pi:P\fto M$ a submersion.
Denote	 $$\pi^* \CF_M:=Span_{{C_c^{\infty}(P)}} \{X\circ \pi:X\in \CF_M\},$$
a submodule of sections of  the pull-back vector bundle $\pi^*TM$ over $P$.	
	The {\bf pullback foliation} of $\CF_M$ under $\pi$ is 	\[\pi^{-1}(\CF_M):=   (d\pi)^{-1}(\pi^*\CF_M),\]
	where the pre-image is taken with respect to 
	$d\pi\colon  \CX_c(P)\to {\Gamma(\pi^*TM)}, Y\mapsto d\pi (Y).$
The pullback foliation is a singular foliation on $P$ \cite[Prop. 1.10]{AndrSk}.	
\end{defi}

\begin{rem}
The original definition in \cite{AndrSk} is given in more generality, for any  smooth map $\pi$ transverse to $\CF_M$. When $\pi$ is a submersion,  we also have the following description:
\[\pi^{-1}(\CF_M)= Span_{{C_c^{\infty}(P)}} \left\{(d\pi)^{-1}(\{X\circ \pi:X\in \CF_M\})\right\},\]
	 i.e. the pullback foliation is generated by projectable vector fields whose projection lies in $\CF_M$.
\end{rem}

\begin{defi}\label{defi:globalhull} Given a submodule  $\CF$ of $\CX_c(P)$, its {\bf global hull}  is given by:
	$$\widehat{\CF}:=\{X\in \CX(P) \st fX\in \CF \hspace{.1in} \forall f\in \CI_c(P)\}.$$
\end{defi}
{Given a {surjective} submersion $\pi : P \to M$ (not necessarily with compact fibers) and a singular foliation $\cF_M$ on $M$, it may happen that }the set of projectable vector fields in $\pi^{-1}(\CF_M)$ consists just of the zero vector field, but by definition of pullback foliation, $\pi^{-1}(\CF_M)$ is the {$\CI_c(P)$-span} of projectable vector fields in $\widehat{\pi^{-1}(\CF_M)}$.

\subsubsection*{{Holonomy groupoids}}

 Singular foliations as in definition \ref{def:singfol} contain more information than just the underlying partition of $P$ into leaves, {since they carry ``dynamics'' on $P$}. This extra information was used in \cite{AndrSk} to define the holonomy groupoid via the following ``building blocks'':
 
 \begin{defi}
 	Given a foliated manifold $(P,\CF)$, a \textbf{bisubmersion} for $\CF$ is a triple $(U,\bt,\bs)$ where $U$ is a manifold and  $\bt\colon U\fto P$, $\bs\colon U\fto P$ are submersions, such that:
 	$$\bs^{-1}(\CF)=\bt^{-1}(\CF)=\SEC_c(\ker(d\bs))+\SEC_c(\ker(d\bt)).$$
 \end{defi}

\begin{ex}
	Let $\CG\soutar P$ be a Lie groupoid and $\CF_\CG$ the singular foliation on $P$ given by the Lie algebroid of $\CG$. Any {Hausdorff} open set $U\subset \CG$,
together with $\bt|_U$ and $\bs|_U$, is a bisubmersion for $\CF_\CG$. In particular, if $\CG$ is a Hausdorff Lie groupoid, then it is a  bisubmersion.
\end{ex}

The following proposition, proven in \cite[\S 2.3]{AndrSk}, assures the existence of bisubmersions at any given point $p_0\in P$.  
\begin{prop}\label{prop:path.hol.bi}
	Given $p_0\in P$, let $X_1,\dots X_k\in \CF$ be vector fields whose classes in the fibre $\CF_{p_0}$ form a basis. For $v=(v_1,\dots, v_k)\in \RR^k$, put $\varphi_v:=\exp(\sum_i v_i X_i)$, where $\exp$ denotes the time one flow.
	Put $W=\RR^k\times P$, $\bt(v,p)=\varphi_v(p)$ and $\bs(v,p)=p$.
	There is a neighborhood $U\subset W$ of $(0,p_0)$ such that $(U,\bt,\bs)$ is a bisubmersion.
\end{prop}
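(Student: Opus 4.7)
The plan is to verify, after shrinking $W$ to a neighborhood $U$ of $(0,p_0)$, that $\bs$ and $\bt$ are submersions on $U$ and that
\[\bs^{-1}(\CF)\;=\;\bt^{-1}(\CF)\;=\;\Gamma_c(\ker d\bs)+\Gamma_c(\ker d\bt).\]
The submersion part is immediate: $\bs$ is the projection onto $P$, while $\bt(0,p)=p$ shows that $d\bt_{(0,p_0)}$ restricts to the identity on $T_{p_0}P$, so $\bt$ is a submersion on some neighborhood of $(0,p_0)$.

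The core step is to analyze the vector fields $\eta_i:=\partial/\partial v_i$ on $W$. Clearly $\eta_i\in\ker d\bs$, while the variational formula for flows lets me write
\[d\bt(\eta_i)|_{(v,p)}\;=\;\frac{\partial\varphi_v}{\partial v_i}\bigg|_p\;=\;A_i(v)(\varphi_v(p))\]
for a smooth family $A_i(v)\in\CX(P)$ with $A_i(0)=X_i$. The key input is that each $A_i(v)$, and hence also $\varphi_v^*A_i(v)$, lies in $\CF$: this reduces to the classical fact that flows of elements of an involutive, locally finitely generated module preserve the module, applied to $V(v)=\sum_jv_jX_j\in\CF$ via a Duhamel-type expansion of $A_i(v)$ in iterated brackets of the $X_j$'s. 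In particular $\eta_i\in\bt^{-1}(\CF)$.

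With these facts in hand, the two inclusions follow. For $\supseteq$: vector fields in $\ker d\bs$ project to $0$ under $\bs$, and since $\ker d\bs$ is $C^\infty$-generated by the $\eta_i$'s they also lie in $\bt^{-1}(\CF)$; any $Y=\sum_i\xi^i\eta_i+Z\in\ker d\bt$ (with $Z$ vertical in the $P$-direction) satisfies $Z=-\sum_i\xi^i(\varphi_v^*A_i(v))\in\bs^*\CF$, so $Y\in\bs^{-1}(\CF)$. For $\subseteq$: by Nakayama's lemma, the locally finitely generated module $\CF$ is generated on a neighborhood of $p_0$ by $X_1,\dots,X_k$; given $Y\in\bs^{-1}(\CF)$ one has $d\bs(Y)=\sum_if_i(X_i\circ\bs)$, and the horizontal lifts $\tilde X_i=(0,X_i)$ can be written as an element of $\ker d\bs$ plus an element of $\ker d\bt$ by inverting the matrix obtained from expressing $\varphi_v^*A_j(v)=\sum_lH_j^l(v,\cdot)X_l$, which reduces to the identity at $v=0$ and is therefore invertible near $(0,p_0)$. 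The argument for $\bt^{-1}(\CF)$ is symmetric.

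The main obstacle is controlling the smooth $v$-dependence of the variational vector fields $A_i(v)$ and verifying that $\varphi_v^*A_i(v)\in\CF$; this forces us to use, rather than reprove, the flow-invariance of $\CF$. Secondary issues such as ensuring compact supports after shrinking $U$, and continuous invertibility of the matrix $(H_j^l)$, are routine.
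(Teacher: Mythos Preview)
The paper does not give its own proof of this proposition; it is quoted verbatim from \cite[\S 2.3]{AndrSk} (Proposition~2.10 there). Your argument follows the same route as Androulidakis--Skandalis: verify that $\bs,\bt$ are submersions near $(0,p_0)$; show that the variational fields $\eta_i=\partial/\partial v_i$ span $\ker d\bs$ and satisfy $d\bt(\eta_i)\in\bt^*\CF$ via flow-invariance of $\CF$; use Nakayama to make $X_1,\dots,X_k$ local generators of $\CF$ near $p_0$; and finally invert the coefficient matrix $(H_j^l)$, which is the identity at $v=0$, to write the horizontal lifts $\tilde X_i$ as sums of $\ker d\bs$- and $\ker d\bt$-pieces. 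This is exactly the skeleton of the proof in \cite{AndrSk}.

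One minor comment: your phrase ``Duhamel-type expansion of $A_i(v)$ in iterated brackets'' is slightly misleading. The variation-of-constants formula gives $A_i(v)$ as an integral of pushforwards $(\varphi_{(1-s)v})_*X_i$, and each of these lies in $\widehat\CF$ precisely by the flow-invariance statement you invoke (proved in \cite{AndrSk} by an ODE/Gronwall argument on the coefficients with respect to local generators, not by a bracket expansion). No series in iterated Lie brackets is needed, and indeed such a series would only be formal. This does not affect the correctness of your argument; the key input---that flows of elements of $\CF$ preserve $\CF$---is correctly identified and used.
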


\begin{defi}\label{def:path.hol.bi}
	A bisubmersion as in Prop. \ref{prop:path.hol.bi}, when it has $\bs$-connected fibers, is called \textbf{path holonomy bisubmersion}.
\end{defi}

\begin{defi}\label{def:bisec.carry}
	Let $(P,\CF)$ be a foliated manifold, $(U,\bt,\bs)$ a bisubmersion and $u\in U$. Denote $x=\bs(u)$.
	\begin{itemize}
		\item A \textbf{bisection} at $u$ consists of a  $\bs$-section $\sigma\colon V\fto U$, {defined on an open subset $V\subset P$,} 
		{whose image   is transverse to  the fibers of $\bt$ and passes through $u$.}
		\item Given a diffeomorphism $f\colon P\supset V\fto V'\subset P$, a bisubmersion $(U,\bt,\bs)$ is said to \textbf{carry} $f$ at $u\in U$ if there exists a bisection $\sigma$ at $u$ such that $f=\bt\circ\sigma$.
	\end{itemize}
\end{defi}

\begin{defi}\label{def:inv.comp.mor.bi}
	Let $(P,\CF)$ be a foliated manifold, $(U_1,\bt_1,\bs_1)$ and $(U_2,\bt_2,\bs_2)$ be bisubmersions for $\CF$.
	\begin{itemize}
		\item The \textbf{inverse} bisubmersion of $U_1$ is $U_1^{-1}:=(U_1,\bs_1,\bt_1)$, obtained interchanging the source and target maps.
		\item The \textbf{composition} bisubmersion is $U_1\circ U_2:=(U_1 {}_{\bs_1} \!\times_{\bt_2} U_2, \bt_1, \bs_2)$.
		\item A \textbf{morphism} of bisubmersions is a map $\mu\colon U_1\fto U_2$ that commutes with the respective source and target maps. We will say that it is a local morphism if it is defined on an open set of $U_1$.
	\end{itemize}
\end{defi}
If there is a morphism of bisubmersions {$\mu\colon U_1\fto U_2$, then any local diffeomorphism carried by $U_1$ at $u\in U_1$ will be carried by $U_2$ at $\mu(u)$}.

\begin{defi}\label{def:eq.rel}
	Let $(P,\CF)$ be a foliated manifold, $(U_1,\bt_1,\bs_1)$ and $(U_2,\bt_2,\bs_2)$ be bisubmersions for $\CF$, $u_1\in U_1$ and $u_2\in U_2$. We say that $u_1$ is {\bf equivalent} to $u_2$ if there is a local morphism of bisubmersions $\mu\colon U_1\supset U'_1\fto U_2$ with $\mu(u_1)=u_2$.
\end{defi}

The previous definition gives an equivalence relation on any family of bisubmersions, {as becomes clear from the following useful proposition}.

\begin{prop}\label{prop:equiv2} Let $(P,\CF)$ be a foliated manifold, $(U_1,\bt_1,\bs_1)$ and $(U_2,\bt_2,\bs_2)$ be bisubmersions for $\CF$, $u_1\in U_1$ and $u_2\in U_2$. Then $u_1$ is equivalent to $u_2$ if and only if $U_1$ and $U_2$ carry the same local diffeomorphism at $u_1$ and $u_2$ respectively.
\end{prop}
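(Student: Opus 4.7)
The forward implication is routine. Given a local morphism $\mu\colon U_1 \supset U_1' \to U_2$ with $\mu(u_1)=u_2$, and a bisection $\sigma$ of $U_1$ at $u_1$ with $\bt_1\circ \sigma = f$, one checks directly that $\mu\circ \sigma$ is a bisection of $U_2$ at $u_2$ also carrying $f$: it is an $\bs_2$-section because $\bs_2\circ \mu\circ\sigma = \bs_1\circ\sigma = \mathrm{id}$; its image is transverse to the $\bt_2$-fibers because $\bt_2\circ\mu\circ\sigma = \bt_1\circ\sigma = f$ is a local diffeomorphism; and it passes through $\mu(u_1)=u_2$.

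For the converse, the strategy is to construct a local morphism $\mu$ near $u_1$ with $\mu(u_1)=u_2$ by matching transverse parametrizations of the two bisubmersions. First I would fix bisections $\sigma_i\colon V \to U_i$ on a common neighborhood $V$ of $x:=\bs_1(u_1)=\bs_2(u_2)$ with $\sigma_i(x)=u_i$ and $\bt_i\circ\sigma_i=f$. Exploiting the bisubmersion identity $\bs_1^{-1}(\CF)=\SEC_c(\ker d\bs_1)+\SEC_c(\ker d\bt_1)$, I would next select vector fields $Y_1,\dots,Y_k\in \SEC(\ker d\bs_1)$ near $u_1$ whose values at $u_1$ span $\ker d_{u_1}\bs_1$ and that are $\bt_1$-projectable to vector fields $X_1,\dots,X_k\in \CF$ defined near $f(x)$. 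By the analogous bisubmersion identity for $U_2$, the same $X_i$ lift to vector fields $Z_1,\dots,Z_k\in\SEC(\ker d\bs_2)$ near $u_2$ that are $\bt_2$-projectable to $X_i$.

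The map $(y,a)\mapsto \exp(a_1Y_1)\circ\cdots\circ\exp(a_kY_k)(\sigma_1(y))$ is a local diffeomorphism from a neighborhood of $(x,0)\in V\times\RR^k$ onto a neighborhood of $u_1$ in $U_1$ by the inverse function theorem, since $d\sigma_1|_x$ together with the $Y_i|_{u_1}$ span $T_{u_1}U_1$. I would then define $\mu$ in these coordinates by $(y,a)\mapsto \exp(a_1Z_1)\circ\cdots\circ\exp(a_kZ_k)(\sigma_2(y))$. The identity $\bs_2\circ\mu=\bs_1$ is immediate from the $\bs_2$-verticality of the $Z_i$; the main obstacle is the identity $\bt_2\circ\mu=\bt_1$, which, using $\bt_i\circ\sigma_i=f$ and the common $\bt_i$-projection $X_i$ of $Y_i$ and $Z_i$, reduces to both sides equaling $\exp(a_1X_1)\circ\cdots\circ\exp(a_kX_k)(f(y))$ via the standard intertwining of flows under a submersion with projectable vector fields. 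The hardest step is thus producing the matched projectable lifts $Y_i, Z_i$ with the required spanning property, which is where the bisubmersion identity is used most critically.
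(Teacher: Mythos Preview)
Your argument is correct. The paper itself offers no proof beyond citing Cor.~2.11 of \cite{AndrSk}, so there is little to compare at the level of this paper. Your direct construction---matching flows of $\bt$-projectable, $\bs$-vertical lifts launched from bisections carrying the same $f$---is essentially what that corollary establishes, though Androulidakis--Skandalis organize the argument differently: their Prop.~2.10(b) first produces, for each $U_i$, a \emph{submersive} morphism to a minimal path-holonomy bisubmersion $W$, and a local section of $U_2\to W$ then yields the composite $U_1\to W\to U_2$. Your one-step route is a legitimate shortcut. The step you flag as hardest does deserve one extra line of justification: once lifts $Y_1,\dots,Y_n\in\Gamma(\ker d\bs_1)$ of local generators $X_1,\dots,X_n$ of $\CF$ are in hand, any $W\in\Gamma(\ker d\bs_1)$ decomposes as $\sum_i g_iY_i$ plus a section of $\ker d\bs_1\cap\ker d\bt_1$ (the latter being $\bt_1$-projectable to zero); hence finitely many $\bt_1$-projectable sections of $\ker d\bs_1$ span $\ker d_{u_1}\bs_1$, and one extracts from them the basis you need.
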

\begin{proof}
This is a direct consequence of  \cite[Cor. 2.11]{AndrSk}.
\end{proof}

\begin{defi}\label{def:atlas.bi}
	Let $\CU=\{U_i\}_{i\in I}$ a family of bisubmersions for $\CF$.
	\begin{itemize}
		\item A bisubmersion $U'$ is \textbf{adapted} to $\CU$ if for any $u'\in U'$ there is $u\in U\in \CU$ which is equivalent {to $u'$}. {A family of bisubmersions $\CU'$ is adapted to $\CU$ if any {bisubmersion} $U'\in \CU'$ is adapted to $\CU$.}
		\item We say that $\CU$ is an \textbf{atlas} if:
		\begin{enumerate}
			\item For all $p\in P$ there is a $U\in \CU$ that carries  {the identity diffeomorphism nearby $p$}.
			\item The inverse and finite compositions of elements of $\CU$ are adapted to $\CU$.
		\end{enumerate}
	\end{itemize}
\end{defi}

\begin{ex}\label{ex:UdeG}
	Let $\CG\soutar P$ be a Lie groupoid and $\CF_\CG$ its associated foliation. {Any cover $\CU_\CG:=\{ U_i\}_{i\in I}$ of $\CG$ {by open Hausdorff subsets} is an atlas of bisubmersions for $\CF_\CG$. In particular, if $\CG$ is Hausdorff, it is also an atlas. Any two atlases given by Hausdorff covers of $\CG$ are adapted to each other by the identity morphism on $\CG$.}
\end{ex}

\begin{prop}\label{prop:gpd.atlas}{\bf (Groupoid of an atlas)} Let $\CU$ be an atlas of bisubmersions for $\CF$. Denote  $$\CG(\CU):= \sqcup_{U\in \CU} U / \sim$$ where $\sim$ is the equivalence relation given in definition \ref{def:eq.rel}, and endow it with the quotient topology. There is a natural structure of open topological groupoid on $\CG(\CU)$ where the source and target maps are given by the source and target maps of the elements of $\CU$.
\end{prop}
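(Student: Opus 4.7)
The plan is to define each groupoid operation -- source, target, unit, inverse, multiplication -- first on the disjoint union $\sqcup_{U \in \CU} U$ using the data of the individual bisubmersions, then to verify that it descends to the quotient $\CG(\CU)$, and finally to check the groupoid axioms and the topological properties. The guiding principle is that well-definedness will always reduce, via Prop. \ref{prop:equiv2}, to the observation that equivalent points in bisubmersions carry the same local diffeomorphism.

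First I would set $\bs([u]):=\bs_U(u)$ and $\bt([u]):=\bt_U(u)$ for $u \in U$; these descend to $\CG(\CU)$ because any local morphism of bisubmersions witnessing $u_1 \sim u_2$ commutes with source and target by Def. \ref{def:inv.comp.mor.bi}. For the unit at $p \in P$ I would use condition (1) of Def. \ref{def:atlas.bi} to pick $U \in \CU$ carrying the identity near $p$ together with a point $u \in U$ satisfying $\bs(u)=\bt(u)=p$; setting $1_p:=[u]$ is unambiguous by Prop. \ref{prop:equiv2}, since any two such points carry the identity diffeomorphism and are therefore equivalent. For the inverse, I would map $[u]_{U}$ to $[u]_{U^{-1}}$, noting that $U^{-1}$ is adapted to $\CU$ by condition (2) and that a morphism $U_1 \to U_2$ is automatically a morphism $U_1^{-1} \to U_2^{-1}$.

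For the multiplication of composable classes $[u_1],[u_2]$ with $\bs([u_1])=\bt([u_2])$, I would send $([u_1],[u_2])$ to $[(u_1,u_2)] \in U_1 \circ U_2$, the latter bisubmersion being adapted to $\CU$ by condition (2) again. Well-definedness is the first substantive point: if $u_i \sim u'_i$ via local morphisms $\mu_i\colon U_i \to U'_i$, then $\mu_1 \times \mu_2$ restricts to a local morphism $U_1\circ U_2 \to U'_1 \circ U'_2$, because commutation with source and target maps ensures the fiber product is preserved, and this morphism sends $(u_1,u_2)$ to $(\mu_1(u_1),\mu_2(u_2))$. The groupoid axioms then follow by applying Prop. \ref{prop:equiv2}: associativity from the canonical identification of iterated compositions of bisubmersions, the unit axiom from the fact that a unit bisubmersion composed with $U$ carries the same local diffeomorphisms as $U$, and the inverse axiom from the fact that $U \circ U^{-1}$ carries the identity at any point of the diagonal.

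For the topological part I would equip $\CG(\CU)$ with the quotient topology of the disjoint union and verify that the quotient map $q\colon \sqcup_{U \in \CU} U \to \CG(\CU)$ is open. This is the main technical hurdle: given an open set $V \subset U_1$, its $\sim$-saturation in any other $U_2 \in \CU$ is open, because any equivalence $u_2 \sim v$ with $v \in V$ is by definition witnessed by a local morphism defined on an open neighborhood of $u_2$. From openness of $q$ and the corresponding properties on each bisubmersion, continuity and openness of $\bs$ and $\bt$ follow; continuity of multiplication and inversion reduce to continuity of the tautological maps $U_1 {}_{\bs}\!\times_{\bt} U_2 \to U_1 \circ U_2$ and $U \to U^{-1}$. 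The delicate bookkeeping of the open neighborhoods on which local morphisms of bisubmersions are defined -- and in particular the proof that $q$ is open -- is what I expect to be the main obstacle; aside from this, the proof is a systematic application of Prop. \ref{prop:equiv2} combined with the two atlas conditions.
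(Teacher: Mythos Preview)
The paper does not supply a proof of this proposition: it is stated as background, with the construction originating in \cite{AndrSk} (see \S3 there). Your proposal is correct and is precisely the standard argument one finds in that reference; the use of Prop.~\ref{prop:equiv2} to reduce every well-definedness check to the statement ``equivalent points carry the same local diffeomorphism'' is exactly how the proof goes, and your observation that openness of the quotient map follows because local morphisms of bisubmersions are defined on open neighborhoods is the key topological point. There is nothing to compare against in the paper itself.
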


The proof of the following statement can be found in   \cite[ \S 3.1]{MEsingfol} and in \cite{T.ALF}. 
\begin{prop}\label{prop:adapt}
Two atlases $\CU$ and $\CU'$ are adapted to each other if and only if their corresponding topological groupoids are isomorphic: $\CG(\CU)\cong \CG(\CU')$.  \end{prop}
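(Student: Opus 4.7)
The plan is to use Prop.~\ref{prop:equiv2} as the key bridge: it characterizes $u_1\sim u_2$ as the existence of a common local diffeomorphism carried by both bisubmersions at $u_1$ and $u_2$. This characterization is intrinsic to the bisubmersions themselves and does not depend on any atlas, so equivalence classes appearing in $\CG(\CU)$ and in $\CG(\CU')$ are subsets of a common ``universe'' of holonomy germs, and the proposition reduces to comparing the two.

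For the forward direction, assume $\CU$ and $\CU'$ are mutually adapted and define $\Phi\colon \CG(\CU)\to \CG(\CU')$ by $[u]_\CU\mapsto [u']_{\CU'}$, where $u'\in U'\in \CU'$ is chosen equivalent to $u$; existence is supplied by adaptedness and well-definedness by transitivity of $\sim$. Preservation of $\bs,\bt$, inverses, and composition is immediate from Def.~\ref{def:eq.rel} and Def.~\ref{def:inv.comp.mor.bi} combined with the observation (via Prop.~\ref{prop:equiv2}) that $u_i\sim u_i'$ for $i=1,2$ forces $(u_1,u_2)\sim (u_1',u_2')$ in the corresponding composition bisubmersions. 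For continuity, note that at each $u\in U$ the equivalence $u\sim u'$ is realised by a local morphism of bisubmersions $\mu\colon U\supset V\to U'\in \CU'$ with $\mu(u)=u'$; then on $V$, the composite $V\to U\to \CG(\CU)\xrightarrow{\Phi}\CG(\CU')$ coincides with $V\xrightarrow{\mu}U'\to \CG(\CU')$, which is continuous. By the quotient topology on $\CG(\CU)$ this yields continuity of $\Phi$, and the symmetric construction produces a continuous inverse.

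For the reverse direction, let $\Psi\colon \CG(\CU)\to\CG(\CU')$ be an isomorphism of topological groupoids over $P$; I want to show $\CU$ is adapted to $\CU'$ (the converse is symmetric). Fix $u\in U\in \CU$ and let $u'\in U'\in \CU'$ represent $\Psi([u]_\CU)$; preservation of $\bs,\bt$ gives $\bs(u)=\bs(u')$ and $\bt(u)=\bt(u')$. By Prop.~\ref{prop:equiv2} it suffices to exhibit a local diffeomorphism $f$ carried by both $U$ at $u$ and some bisubmersion $\widetilde U\in \CU'$ at a point $\widetilde u\sim u'$. Starting from a bisection $\sigma\colon V\to U$ at $u$ with $f:=\bt\circ\sigma$, the projection $\bar\sigma$ is a continuous section of $\bs_{\CG(\CU)}$ with $\bt_{\CG(\CU)}\circ\bar\sigma=f$; applying $\Psi$ gives a continuous section of $\bs_{\CG(\CU')}$ passing through $\Psi([u]_\CU)$ and still satisfying $\bt\circ(\Psi\circ\bar\sigma)=f$. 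A local smooth lift of this section to a bisection of some $\widetilde U\in \CU'$ provides a point $\widetilde u$ with $\widetilde u\sim u'$ at which $\widetilde U$ carries $f$; applying Prop.~\ref{prop:equiv2} once more and using transitivity of $\sim$ one concludes $u\sim u'$.

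The main technical obstacle is precisely the lifting step in the reverse direction: producing a smooth bisection in some $\widetilde U\in \CU'$ out of a continuous section of $\bs_{\CG(\CU')}$ obtained by applying $\Psi$. This combines the openness of the quotient projection $\sqcup_{U'\in\CU'}U'\to \CG(\CU')$ (implicit in Prop.~\ref{prop:gpd.atlas}) with the submersion property of the source maps on bisubmersions, allowing one to upgrade a continuous section into a smooth bisection locally; alternatively one can argue at the level of germs by pulling back via the continuous inverse $\Psi^{-1}$. Everything else in the proof is formal manipulation of the equivalence $\sim$ and of the quotient topology.
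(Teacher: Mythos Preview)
The paper does not supply its own proof of this proposition; it simply refers to \cite{MEsingfol} and \cite{T.ALF}. So there is no in-paper argument to compare against, and your forward direction is essentially the standard argument one would expect to find in those references: the map $[u]_{\CU}\mapsto [u']_{\CU'}$ is well defined by transitivity of $\sim$, is a groupoid morphism because carried diffeomorphisms compose, and is continuous because it is locally induced by smooth morphisms of bisubmersions via the quotient topology. That part is fine.

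The reverse direction, however, has a genuine gap that you yourself flag but do not close. Given only an abstract isomorphism $\Psi$ of topological groupoids over $P$, you produce a \emph{continuous} section $\Psi\circ\bar\sigma$ of $\bs_{\CG(\CU')}$ and then assert the existence of a \emph{smooth} bisection of some $\widetilde U\in\CU'$ lifting it. Openness of the quotient map $\sqcup U'\to\CG(\CU')$ together with the fact that each $\bs_{U'}$ is a submersion does not yield this: openness gives you no control on preimages of sections, and since $\CG(\CU')$ is in general not a manifold there is no implicit-function-type mechanism to convert a continuous section into a smooth one. Your alternative suggestion, ``pull back via $\Psi^{-1}$'', is circular: it presupposes that $\Psi^{-1}$ already matches carried diffeomorphisms, which is exactly what is at stake.

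More fundamentally, the reverse implication as literally read (``\emph{some} isomorphism of topological groupoids over $P$ exists $\Rightarrow$ mutually adapted'') is stronger than what the references actually establish and is not obviously true: nothing forces an abstract $\Psi$ to respect the ``carried local diffeomorphism'' data, so there is no reason the representative $u'$ of $\Psi([u])$ should carry the same germ as $u$. The statement in \cite{MEsingfol} is really about the \emph{canonical} morphism $\CG(\CU)\to\CG(\CU')$ that exists as soon as $\CU$ is adapted to $\CU'$; it is this canonical map that is an isomorphism precisely when the adaptation goes both ways. If you rephrase the reverse direction in those terms, the lifting problem disappears, because the canonical map is by construction induced by local morphisms of bisubmersions and therefore automatically compatible with carried diffeomorphisms.
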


{\begin{defi}\label{def:scon.bisub} {Let $\cS$ be a family of source connected bisubmersions such that $\cup_{U\in \CS} \bs(U)=M$, satisfying this condition:
for every $U\in \cS$ and $p\in \bs(U)$, there is an element $e_p\in U$ carrying 
 the identity {diffeomorphism}  nearby $p$. The atlas $\CU$ generated by $\cS$ is called a \textbf{source connected atlas}.}
	
\end{defi}}

{{For instance, a family $\cS$ of path holonomy bisubmersions for $\cF$ such that $\cup_{U\in\cS} \bs(U)=M$ generates a source connected atlas. It is called a \textbf{path holonomy atlas}.} Also, any source connected Hausdorff Lie groupoid giving rise to $\cF$ constitutes a source connected atlas.}

{The groupoid of a source connected atlas is source connected, and one can show \cite{T.ALF} that it is the same\footnote{ {Prop. \ref{prop:adapt} then implies that all source connected atlases are adapted to each other}.} for all source connected atlases.}

\begin{defi}\label{def:hol.grpd}
	The \textbf{holonomy groupoid} $\Ho(\CF)$ of $\CF$ is the groupoid of any {source connected atlas}.  
\end{defi}

In particular the holonomy groupoid is an open source connected topological groupoid, which does not depend (up to isomorphism)  on the choice of {source connected atlas}.

\begin{rem}\label{rem:hol.regfol}
Let $\CG\soutar P$ be a source connected Lie groupoid and $\CF_{\CG}$ its associated foliation.  Then, as we now explain, the holonomy groupoid $\Ho(\CF_\CG)$ is a quotient of $\CG$. 

Let $\cS_{\CG}$ be a Hausdorff source connected cover of a neighborhood of the identity bisection in $\CG$ and $\CU_{\CG}$ the atlas generated by $\cS_{\CG}$. {Then} {$\CU_{\CG}$ is a {source connected atlas}, hence} $\CG(\CU_\CG) \cong \Ho(\CF_\CG)$. One can show that $\CU_{\CG}$ is adapted to the atlas given in example \ref{ex:UdeG}, and therefore that {$\CG(\CU_\CG)\cong \CG/\sim$,  where the latter equivalence relation identifies two points when they carry the same local diffeomorphism. Consequently, $\Ho(\CF_\CG)\cong \CG/\sim$.}

 {When $\CF$ is a regular foliation, $\Ho({\CF})$ agrees with the classical notion of holonomy groupoid of a regular foliation.} Indeed,
by the Frobenius theorem there is a Lie algebroid $D\subset TP$ such that $\CF=\SEC_c(D)$.  {The monodromy groupoid $\Pi(\CF)$, consisting  of homotopy classes of paths in the leaves of $\CF$, is a source connected Lie groupoid  integrating $D$}. 
{Therefore $\Ho({\CF})\cong (\Pi(\CF)/\sim)$, and the latter is the well-known holonomy groupoid of a regular foliation.} 
\end{rem}

 \begin{rem}
{A singular foliation $\cF$ on $M$ is called {\bf projective} if there is a vector bundle $E$ such that $\cF\cong \Gamma_c(E)$  as $C^{\infty}(M)$-modules. These are exactly the singular foliations for which the holonomy groupoid $H(\cF)$ is a Lie groupoid. The class of projective foliations contains the regular foliations as a proper subclass.}
\end{rem}

\subsection{{The main theorem}}\label{subsec:main}

We   reproduce \cite[Lemma 3.2]{AZ1}, about quotients of foliated manifolds. 

\begin{prop}\label{prop:submfol} Let $\pi : P \to M$ be a {surjective} submersion with connected fibers. Let $\cF$ be a singular foliation on $P$, such that $\Gamma_c(\ker d\pi) \subset \cF$. Then there is a unique singular foliation  $\cF_M$ on $M$
with $\pi^{-1}(\cF_M) = \cF$. 
\end{prop}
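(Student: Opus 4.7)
The plan is to construct $\CF_M$ from the $\pi$-projectable vector fields in $\widehat{\CF}$, verify it is a singular foliation satisfying $\pi^{-1}(\CF_M)=\CF$, and then deduce uniqueness from the fact that this construction depends only on $\CF$ and $\pi$.

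The technical core is a local structure claim: around every $p\in P$, $\CF$ admits local generators of the form $\{V_1,\dots,V_k,X_1,\dots,X_r\}$ with the $V_j$'s vertical sections of $\ker d\pi$ and the $X_i$'s $\pi$-projectable. Working in adapted coordinates $(x,y)$ with $\pi(x,y)=x$, the hypothesis $\SEC_c(\ker d\pi)\subset\CF$ places $\partial_{y^j}\in\CF$ locally, so the vertical parts of any chosen local generators can be absorbed into the $V_j$'s. The horizontal components are then rendered $y$-independent modulo further vertical corrections, using iterated bracketing with the $\partial_{y^j}$'s together with local finite generation of $\CF$. I expect this to be the main obstacle, as extracting genuine $y$-independence of horizontal coefficients from closure under bracketing with vertical fields requires a careful finite-generation argument in adapted coordinates.

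Given the local claim, define
\[
\CF_M:=Span_{\CI_c(M)}\!\left\{\pi_*X : X\in\widehat{\CF}\text{ is }\pi\text{-projectable}\right\},
\]
noting that $\pi_*X$ is well-defined because $\pi$ has connected fibers. The module property is immediate, bracket-closure follows from $\pi_*[X,X']=[\pi_*X,\pi_*X']$ for projectable $X,X'$, and local finite generation follows from the local claim by pushing the $X_i$'s forward to give generators of $\CF_M$ near $\pi(p)$; thus $\CF_M$ is a singular foliation in the sense of Def.~\ref{def:singfol}. For the identity $\pi^{-1}(\CF_M)=\CF$: the inclusion ``$\supseteq$'' is immediate from the local claim, since each local generator of $\CF$ lies in $\pi^{-1}(\CF_M)$ (either as a vertical field or via its projection to $\CF_M$). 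For ``$\subseteq$'', given $Y\in\CX_c(P)$ with $d\pi(Y)=\sum_i f_i(Z_i\circ\pi)$, $f_i\in\CI_c(P)$, $Z_i\in\CF_M$, pick $\pi$-projectable lifts $\tilde Z_i\in\widehat{\CF}$ of the $Z_i$'s: each $f_i\tilde Z_i$ lies in $\CF$ by the defining property of the global hull, and $Y-\sum_i f_i\tilde Z_i$ is vertical and compactly supported, hence in $\SEC_c(\ker d\pi)\subset\CF$, so $Y\in\CF$.

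For uniqueness, suppose $\CF_M'$ is another singular foliation on $M$ with $\pi^{-1}(\CF_M')=\CF$. Any $Y\in\CF_M$ of the form $g\,\pi_*X$, with $g\in\CI_c(M)$ and $X\in\widehat{\CF}$ projectable, can, after a partition of unity, be assumed supported in an open $V\subset M$ admitting a local section $\sigma\colon V\to P$ of $\pi$. Choosing $\psi\in\CI_c(P)$ with $\psi\equiv 1$ on $\sigma(\text{supp}\,g)$, the field $\psi\cdot(g\circ\pi)\cdot X$ lies in $\CF=\pi^{-1}(\CF_M')$, so $d\pi\bigl(\psi(g\circ\pi)X\bigr)=\sum_k h_k(W_k\circ\pi)$ with $h_k\in\CI_c(P)$, $W_k\in\CF_M'$; restricting this identity along $\sigma$ and multiplying by a cutoff in $\CI_c(V)$ equal to $1$ on $\text{supp}\,g$ yields $Y=\sum_k (h_k\circ\sigma)\cdot W_k\in\CF_M'$. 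By symmetry $\CF_M=\CF_M'$.
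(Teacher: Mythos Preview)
The paper does not give its own proof of this proposition; it merely reproduces the statement from \cite[Lemma 3.2]{AZ1}. Your overall strategy---define $\CF_M$ as the $C^\infty_c(M)$-span of pushforwards of projectable elements of $\widehat{\CF}$, verify $\pi^{-1}(\CF_M)=\CF$, then uniqueness via local sections---is correct, and in fact coincides with the description $\CF_M=\pi_*(\CF)$ that the paper records in Lemma~\ref{lem:projgen2}~ii). Note, however, that the paper's logic runs the other way: it \emph{deduces} that $\widehat{\CF}^{proj}$ generates $\CF$ (your ``local claim'') from Prop.~\ref{prop:submfol}, via the definition of pullback foliation. So you are proving the two statements in the reverse order, which is fine provided your local claim stands on its own.

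That is where the gap lies. Your sketch ``iterated bracketing with the $\partial_{y^j}$'s together with local finite generation'' does not close: repeated brackets give you that all $y$-partial derivatives of a horizontal generator $X_k$ lie in $\CF$, but the Hadamard/Taylor remainder $X_k(x,y)-X_k(x,y_0)=\sum_j(y^j-y_0^j)\int_0^1[\partial_{y^j},X_k](x,y_0+t(y-y_0))\,dt$ is an integral of \emph{values at different fiber points}, and it is not obvious that this integral is again a $C^\infty$-combination of the fixed generators. A clean way to repair this is to use that the flow of vertical fields preserves $\CF$: on $\RR^m_v\times P$ the diffeomorphism $\Psi(v,x,y)=(v,x,y+v)$ is the time-one flow of $\sum_j v^j\partial_{y^j}\in\widehat{0\times\CF}$, hence $\Psi_*X_k\in\widehat{0\times\CF}$; expressing $\Psi_*X_k$ locally in the generators $\{X_l,\partial_{y^j}\}$ of $0\times\CF$ with coefficients $G^l_k(v,x,y)$, $H^j_k(v,x,y)$ and restricting to the diagonal $v=y$ yields $\tilde X_k(x,y):=X_k(x,0)=\sum_l G^l_k(y,x,y)X_l+\sum_j H^j_k(y,x,y)\partial_{y^j}\in\widehat{\CF}$ locally, as desired. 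A minor further point: your ``by symmetry'' in the uniqueness argument is not literally symmetric, since $\CF_M'$ is not a priori spanned by pushforwards; but the missing inclusion $\CF_M'\subset\CF_M$ is immediate once you observe that any projectable lift $\tilde Z$ of $Z\in\CF_M'$ lies in $\widehat{\pi^{-1}(\CF_M')}=\widehat{\CF}$.
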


The following theorem  is our main result and  will be proven in  Appendix \ref{app:thmXi}.

{
\begin{thm}\label{thm:sur.hol}
		Let $\pi : P \to M$ be a {surjective} submersion with connected fibers. Let $\cF$ be a singular foliation on $P$, such that 
		\begin{equation}\label{eq:bracketpres}
[\Gamma_c(\ker d\pi),\CF] \subset \Gamma_c(\ker d\pi)+\CF.
\end{equation}
Denote by $\CF_M$ the singular foliation on $M$ obtained from
$\CF^\text{big}:=\Gamma_c(\ker d\pi)+\CF$
as in Prop. \ref{prop:submfol}. 	Then there is a canonical,  {open,} surjective morphism of topological groupoids	
$$\Xi \colon \Ho(\CF)\fto \Ho(\CF_M)$$
		covering $\pi$.
\end{thm}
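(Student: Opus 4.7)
The plan is to factor $\Xi$ through the intermediate singular foliation $\cF^{\text{big}}:=\Gamma_c(\ker d\pi)+\cF$ as
\[
\Ho(\cF)\xrightarrow{\ \Phi\ }\Ho(\cF^{\text{big}})\xrightarrow{\ \Psi\ }\Ho(\cF_M).
\]
First I would verify that $\cF^{\text{big}}$ is a singular foliation on $P$: it is a $\CI(P)$-submodule of $\CX_c(P)$, it is locally finitely generated (combine local generators of $\cF$ with a local frame of $\ker d\pi$), and it is involutive, since $[\Gamma_c(\ker d\pi),\Gamma_c(\ker d\pi)]\subseteq\Gamma_c(\ker d\pi)$ by involutivity of $\ker d\pi$, $[\cF,\cF]\subseteq\cF$ by hypothesis, and the cross brackets lie in $\cF^{\text{big}}$ by \eqref{eq:bracketpres}. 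Then Prop.~\ref{prop:submfol} applied to $\cF^{\text{big}}$ supplies $\cF_M$ with $\pi^{-1}(\cF_M)=\cF^{\text{big}}$.

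For $\Psi$, the crucial feature is that $\cF^{\text{big}}=\pi^{-1}(\cF_M)$ is a pullback foliation. I would build a source-connected atlas of $\cF^{\text{big}}$ whose charts use $\pi$-projectable generators: at each $p_0\in P$, pick $\pi$-projectable $X_1,\dots,X_\ell\in\cF^{\text{big}}$ projecting to a basis $Y_1,\dots,Y_\ell$ of $(\cF_M)_{\pi(p_0)}$, together with $Z_1,\dots,Z_m\in\Gamma(\ker d\pi)$ whose values at $p_0$ span $\ker d_{p_0}\pi$. This yields a path-holonomy bisubmersion $U'\subset\RR^{\ell+m}\times P$ for $\cF^{\text{big}}$, and the corresponding $Y_i$ yield a path-holonomy bisubmersion $V\subset\RR^\ell\times M$ for $\cF_M$. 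The map $U'\to V$, $(v,w,p)\mapsto(v,\pi(p))$, is a surjective submersion with connected fibers and intertwines sources and targets with $\pi$, because the combined field $\sum v_iX_i+\sum w_jZ_j$ is $\pi$-projectable to $\sum v_iY_i$. Assembling over this atlas gives the open surjective $\Psi$.

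For $\Phi$, the inclusion $\cF\subseteq\cF^{\text{big}}$ should induce a morphism of atlases. Given a path-holonomy bisubmersion $(U,\bt,\bs)\subset\RR^k\times P$ for $\cF$ at $p_0$, with generators $X_1,\dots,X_k$ of $\cF_{p_0}$, I adjoin $Z_1,\dots,Z_m\in\Gamma(\ker d\pi)$ completing $\{X_i(p_0)\}$ to a basis of $\cF^{\text{big}}_{p_0}$, obtaining a path-holonomy bisubmersion $U''\subset\RR^{k+m}\times P$ for $\cF^{\text{big}}$. The inclusion $U\hookrightarrow U''$, $(v,p)\mapsto(v,0,p)$, commutes with source and target, and the local diffeomorphism of $P$ carried by $(v,p)$ coincides with that carried by $(v,0,p)$. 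By Prop.~\ref{prop:equiv2} this descends to a morphism $\Phi$, independent of the choice of $Z_j$. Composing, $\Xi:=\Psi\circ\Phi$ is the desired morphism. Surjectivity follows from the fact that $d\pi\colon\cF_p\to(\cF_M)_{\pi(p)}$ is surjective for every $p$ — a consequence of $\cF^{\text{big}}=\cF+\Gamma_c(\ker d\pi)$ mapping onto $\cF_M$ fibrewise, with the second summand in the kernel of $d\pi$ — so every path-holonomy generator of $\cF_M$ lifts, up to vertical correction, to an element of $\cF$; openness follows from the local surjective-submersion structure of the maps $U'\to V$.

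The main obstacle is the bookkeeping across atlases: the atlas of $\cF^{\text{big}}$ employed to build $\Psi$ (from $\pi$-projectable lifts) differs from the one employed to build $\Phi$ (from the possibly-non-projectable generators of the $\cF$-atlas). One must identify the two via Prop.~\ref{prop:adapt} — source-connected atlases of $\cF^{\text{big}}$ define the same holonomy groupoid — and via Prop.~\ref{prop:equiv2} — equivalence is detected by the carried local diffeomorphism. A related subtle point for $\Psi$ is that, although projectability of the $X_i$ is needed so that $(v,w,p)\mapsto(v,\pi(p))$ is well-defined on $V$, the resulting morphism does not depend on the choice of lift; and for $\Phi$ one must check the independence on the vertical completion $Z_j$. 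Once this is in place, the surjectivity and openness of $\Xi$ follow from the corresponding properties of $\Phi$ and $\Psi$.
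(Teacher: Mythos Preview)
Your factorization $\Xi=\Psi\circ\Phi$ through $\Ho(\cF^{\text{big}})$ is not the route the paper takes. The paper's key step (Lemma~\ref{lem:projgen2}~i)) is that $\cF$ itself is locally generated by $\pi$-projectable vector fields in $\widehat{\cF}$; with such generators one builds path-holonomy bisubmersions $U$ for $\cF$ for which $(U,\pi\circ\bt,\pi\circ\bs)$ is already a bisubmersion for $\cF_M$, and $\Xi$ is defined in one stroke by $[u]\mapsto[u]_M$ (Rem.~\ref{rem:S}). Openness then falls out of a commutative square whose top arrow is the \emph{identity} on $\sqcup U$ and whose vertical arrows are the open quotient maps to $\Ho(\cF)$ and $\Ho(\cF_M)$.

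Your argument has a genuine gap at openness. You write that ``openness follows from the local surjective-submersion structure of the maps $U'\to V$'', but those are charts for $\Psi$, not for $\Xi$; they only show $\Psi$ is open. The map $\Phi$ is almost never open --- already for $P=\RR^2$, $\cF=\langle\partial_x\rangle$, $\pi=\mathrm{pr}_1$, the morphism $\Phi$ is the closed embedding of a hypersurface into $\Ho(\cF^{\text{big}})=\RR^2\times\RR^2$ --- so openness of $\Xi=\Psi\circ\Phi$ does not follow by composition. To repair this you must analyse the \emph{composite} $U\to U''\to U'\to V$, from a path-holonomy chart of $\cF$ to one of $\cF_M$, and show it is a submersion. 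Tracing the change of generators in $U''\to U'$, this reduces to the $X_i$'s spanning the projectable lifts $\hat X_j$ modulo $\Gamma(\ker d\pi)$, which is essentially the content of Lemma~\ref{lem:projgen2}~i). Once that lemma is available the detour through $\Ho(\cF^{\text{big}})$ becomes superfluous: choose the $X_i$ projectable from the start and argue directly. Your surjectivity sketch is closer to complete --- a ``lift up to vertical correction'' of a projectable vector field is itself projectable, so you are in effect producing projectable elements of $\cF$ --- but converting the fibrewise surjectivity $\cF_p\twoheadrightarrow(\cF_M)_{\pi(p)}$ into surjectivity of $\Xi$ still needs the same bisubmersion-level analysis.
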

{This  should be interpreted as follows.
The singular foliation $\cF_M$ is obtained from $\cF$ by a quotient procedure ({more precisely $\cF_M=\pi_*\cF$, see Lemma \ref{lem:projgen2}
 ii))}. The theorem states that   the same is true for the respective holonomy groupoids.
}

\begin{rem}\label{rem:S}
{We now give a characterization of the morphism $\Xi$.
By Lemma \ref{lem:projgen2} i) and Cor. \ref{cor:adpt}, any source connected atlas $\CU$ for $\CF$ satisfies that $\pi \CU:=\{(U,\pi \circ\bt, \pi\circ\bs) \st U\in \CU\}$ is an atlas equivalent to a path holonomy atlas  for $\CF_M$.
The map $\Xi$ is characterized by} 
\begin{equation}\label{eq:Xidescr}
\Xi([u])=[u]_M
\end{equation}
 {for all $u\in \CU$, where $[u]_M$ is the  class of $u\in (U,\pi \circ\bt, \pi\circ\bs)$, a bisubmersion for $\CF_M$.}
\end{rem}

\begin{rem}\label{rem:ss} {When $\CH(\CF)$ and $\CH(\CF_M)$ are Lie groupoids, $\Xi$ is a surjective submersion (cf. Rem. \ref{rem:appsub})}
\end{rem}

{For regular foliations, the morphism $\Xi$ admits a familiar description.}

{\begin{prop} [{\bf Regular foliations}] \label{prop:regXi} When both $\cF$ and $\cF_M$ are regular foliations, the morphism $\Xi\colon \Ho(\CF)\fto \Ho(\CF_M)$ can be easily described by
$$\Xi([\gamma]_{hol})=[\pi\circ \gamma]_{hol},$$
for each curve $\gamma\colon[0,1]\fto P$ inside a leaf of $\CF$. {Here $[-]_{hol}$ denotes  holonomy classes.}
\end{prop}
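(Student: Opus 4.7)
The plan is to compare the candidate formula $\Xi'([\gamma]_{hol}) := [\pi\circ\gamma]_{hol}$ with the characterization of $\Xi$ given in Remark \ref{rem:S}, exploiting the description $\Ho(\cF)\cong \Pi(\cF)/{\sim}$ available for regular foliations (Remark \ref{rem:hol.regfol}). First I would check that $\Xi'$ is well defined: if $\gamma$ lies in a leaf of $\cF$, its tangent vectors lie in $D:=T\cF$, so those of $\pi\circ\gamma$ lie in $d\pi(D)=D_M=T\cF_M$, hence $\pi\circ\gamma$ lies in a leaf of $\cF_M$. Using the inclusion $\ker d\pi\subset D^{\text{big}}=\ker d\pi+D$ (automatic in the regular case), one can choose slices in $P$ transverse to $D$ of the form $S_0=\pi^{-1}(T_0)$ for slices $T_0\subset M$ transverse to $D_M$; an inspection of the definition of transverse holonomy then yields the intertwining relation $\pi\circ h_\gamma = h_{\pi\circ\gamma}\circ \pi|_{S_0}$, which shows that holonomy-equivalent paths upstairs project to holonomy-equivalent paths downstairs, so that $\Xi'$ descends to $\Ho(\cF)$.

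Next I would identify $\Xi'$ with $\Xi$ via Remark \ref{rem:S}. Since both are groupoid morphisms and classes arising from path holonomy bisubmersions generate $\Ho(\cF)$, it suffices to verify $\Xi([u])=\Xi'([u])$ for a single element $u=(v,p_0)$ in a path holonomy bisubmersion $U=\RR^k\times P$ built from local generators $X_1,\dots,X_k$ of $\cF$. On the one hand, $[u]\in\Ho(\cF)$ is the holonomy class of the leaf-wise curve $\gamma_u(t):=\varphi_{tv}(p_0)$, so $\Xi'([u])=[\pi\circ\gamma_u]_{hol}$. On the other hand, $\Xi([u])=[u]_M$, the class of $u$ viewed in the bisubmersion $\pi U=(\RR^k\times P,\pi\bt,\pi\bs)$ for $\cF_M$.

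The crux is showing these two classes coincide. By Prop. \ref{prop:equiv2} it suffices to check they carry the same germ of local diffeomorphism of $M$. Choosing a local section $s$ of $\pi$ through $p_0$, the map $\sigma(q):=(v,s(q))$ is a $(\pi\bs)$-section of $\pi U$ through $u$ whose image is transverse to the fibers of $\pi\bt$; the diffeomorphism it realizes is $q\mapsto \pi(\varphi_v(s(q)))$. On the other hand, the holonomy of $\pi\circ\gamma_u$, computed on the transversal $T_0=\pi(S_0)$ with $S_0=\pi^{-1}(T_0)$, lifts $q\in T_0$ to $s(q)\in S_0$, transports along $\gamma_u$ via the leaf-wise flow to $\varphi_v(s(q))$, and projects back to $\pi(\varphi_v(s(q)))$; by the intertwining relation this is precisely $h_{\pi\circ\gamma_u}(q)$. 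The two local diffeomorphisms coincide, hence $[u]_M=[\pi\circ\gamma_u]_{hol}$, as desired.

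The main obstacle is the well-definedness step, namely establishing the intertwining $\pi\circ h_\gamma = h_{\pi\circ\gamma}\circ\pi|_{S_0}$: this requires a careful setup of transversals using $\ker d\pi\subset D^{\text{big}}$ and the unique path-lifting property of regular foliations. Once that commutation is secured, all remaining identifications are a routine unwinding of the definitions of bisubmersion, bisection, and leaf-wise flow, together with the characterization of $\Xi$ provided in Remark \ref{rem:S}.
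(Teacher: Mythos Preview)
Your overall strategy---compare the candidate formula with the characterization of $\Xi$ in Remark~\ref{rem:S} on generators coming from path holonomy bisubmersions---is sound, but there is a genuine gap in the well-definedness step. You claim that one can take ``slices in $P$ transverse to $D$ of the form $S_0=\pi^{-1}(T_0)$'', but such a $\pi^{-1}(T_0)$ has dimension $\dim T_0+\dim\ker d\pi=\dim P-\dim D+\dim(D\cap\ker d\pi)$, which is strictly larger than the codimension of $D$ whenever $D\cap\ker d\pi\neq 0$ (for instance whenever $\cF$ is a pullback foliation). In that case $S_0$ meets the leaves of $\cF$ in positive-dimensional submanifolds, so the holonomy map $h_\gamma\colon S_0\to S_1$ you invoke is not defined, and the intertwining relation $\pi\circ h_\gamma=h_{\pi\circ\gamma}\circ\pi|_{S_0}$ does not make sense as stated. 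A related issue appears in your last paragraph: the sentence ``transports along $\gamma_u$ via the leaf-wise flow to $\varphi_v(s(q))$'' is a description of the carried diffeomorphism of $\pi U$, not of the holonomy of $\pi\circ\gamma_u$; the claimed equality between the two still needs an argument (e.g.\ projectability of the $X_i$ so that the flow upstairs covers the flow of $\sum v_i\pi_*X_i$ on $M$, which is what actually computes $h_{\pi\circ\gamma_u}$).

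The paper's proof avoids all transversal bookkeeping by a different route: since $\cF$ is regular, $\Ho(\cF)$ is itself a Lie groupoid, hence a source connected atlas for $\cF$. The candidate map $\widehat{\pi}\colon[\gamma]_{hol}\mapsto[\pi\circ\gamma]_{hol}$ is declared first, and its well-definedness and smoothness are obtained in one stroke as the Lie groupoid morphism integrating the fiberwise surjective Lie algebroid morphism $\pi_*\colon D\to D_M$; in particular $\widehat{\pi}$ is a submersion. This makes $(\Ho(\cF),\pi\circ\bt,\pi\circ\bs)$ a bisubmersion for $\cF_M$ by \cite[Lemma~2.3]{AndrSk}, and $\widehat{\pi}$ is visibly a morphism of bisubmersions into $(\Ho(\cF_M),\bt_M,\bs_M)$. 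Remark~\ref{rem:S} then gives $\Xi=\widehat{\pi}$ directly. Compared with your approach, this trades the hands-on carried-diffeomorphism computation for a single appeal to Lie algebroid integration, which simultaneously handles well-definedness and the identification with $\Xi$.
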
}

 \begin{proof} 
 {We may assume that $\Ho(\CF)$ is Hausdorff (when not, one needs to argue using a Hausdorff cover of it as in Rem. \ref{rem:hol.regfol}).}
 {The {Lie} groupoids $(\Ho(\CF),\bt,\bs)$ and $(\Ho(\CF_M),\bt_M,\bs_M)$ are source connected atlases for $\CF$ and $\CF_M$ respectively. 
The map $\widehat{\pi}\colon \Ho(\CF)\fto \Ho(\CF_M);[\gamma]_{hol}\mapsto[\pi\circ \gamma]_{hol}$ is a submersion, {since it is a Lie groupoid morphism integrating the fiber-wise surjective Lie algebroid morphism $\pi_*$}. This implies that $(\Ho(\CF),\bt_M \circ \widehat{\pi},\bs_M \circ \widehat{\pi})$  is a bisubmersion for $\CF_M$, by \cite[Lemma 2.3]{AndrSk}. Notice that the latter triple equals $(\Ho(\CF),\pi\circ \bt,\pi\circ \bs)$, which hence is a bisubmersion.}
	 	
{Using that $\pi$ has   connected fibers we have that $(\Ho(\CF),\pi\circ \bt,\pi\circ \bs)$ is a source connected atlas for $\CF_M$, and by Rem. \ref{rem:S} we can thus compute the map $\Xi$ as follows:}
$$\Xi([\gamma]_{hol})=\widehat{\pi}([\gamma]_{hol})=[\pi\circ\gamma]_{hol},$$
where the first equality holds by {eq. \eqref{eq:Xidescr}} and the fact that
  $\widehat{\pi}\colon (\Ho(\CF),\pi\circ \bt,\pi\circ \bs)\fto (\Ho(\CF_M),\bt_M,\bs_M)$ is a morphism of bisubmersions.
\end{proof}

We present an example for Thm. \ref{thm:sur.hol} where $\cF$ is a regular foliation  and $\CF_M$ is a genuinely singular foliation. Notice that the holonomy groupoid of the former foliation has discrete isotropy groups, whereas for the latter the isotropy groups are not all discrete.

\begin{ex}\label{ex:non.sing.q}
{
Consider the cylinder $P:=S^1\times \RR$ with coordinates $(\theta,y)$ and
 the   regular foliation\footnote{The foliation $\cF$ is the quotient by the natural $\ZZ$-action of the foliation on the $x$-$y$-plane whose leaves are given by $graph(e^{x+c})$  on the open upper plane, $graph(-e^{x+c})$ on the open lower plane (with $c$ varying through all real numbers), and the line $\{y=0\}$.} $\cF$  given by the  integral curves of the nowhere vanishing vector field $X:=\pd{\theta}+y\pd{y}$.
The circle $U(1)$ acts on the cylinder $P$ by rotations of the first factor, preserving the foliation $\cF$. The singular foliation $\CF^\text{big}$ on $P$ has three leaves (two open leaves, separated by the middle circle).
The quotient map $$\pi\colon P=S^1 \times \RR \to  M:=P/U(1)\cong \RR$$ is the second projection.  On the quotient, the induced foliation is $\cF_M=\langle y\pd{y}\rangle$, a genuinely singular foliation.}

{For the holonomy groupoids, we have $\Ho(\cF)=\RR\times P$, the transformation groupoid of the action of the Lie group $\RR$ on $P$ by the flow of $X$, which reads $\phi_t(\theta \text { mod }2\pi, y)=(\theta +t \text { mod }2\pi,e^ty)$. Further  $\Ho(\cF_M)=\RR\times M$, the transformation groupoid of the action of the Lie group $\RR$ on $M$ by the flow of $y\pd{y}$, which reads $\phi_t(y)=e^ty$. This follows from \cite[Ex. 3.7 (ii)]{AZ1}. The canonical surjective morphism of Thm. \ref{thm:sur.hol} is $$\Xi \colon \RR\times P\to \RR\times M,\;\; (t,p)\mapsto(t,\pi(p)).$$
This can be seen 
   from   {eq. \eqref{eq:Xidescr}, since the vector field $X$ $\pi$-projects to $y\pd{y}$.} 
Notice that, at points $S^1\times \{0\}$, the isotropy groups of $\Ho(\cF)$ are discrete, as for all regular foliations, while the  isotropy group of $\Ho(\cF_M)$ at the point $0\in M$ is isomorphic to $\RR$. 
}
\end{ex}

\begin{figure}[h]\label{fig:moebius}
	\centering
	\scalebox{.3}{\includegraphics{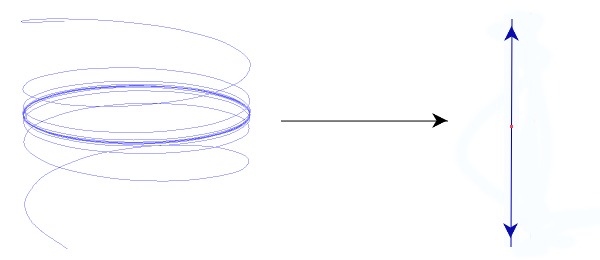}}
	\caption{The foliated manifolds in Example \ref{ex:non.sing.q}.}
\end{figure}

\subsection{A characterization of the quotient map for pullback-foliations}\label{subsec:char}

We make the map $\Xi$ in Thm. \ref{thm:sur.hol} more explicit in the special case that $\Gamma_c(\ker d\pi) \subset \CF$. {In this case $\cF=\CF^{big}:=\pi^{-1}\CF_M$ is the pullback of $\cF_M$ by $\pi$.}

We will need \cite[Thm. 3.21]{MEsingfol}, stated as follows:

\begin{thm}\label{thm:Homeo}
	Given a foliated manifold $(M,\CF_M)$ and a surjective submersion with connected fibers $\pi\colon P\fto M$, {there is a canonical isomorphism} $$\varphi\colon \Ho(\pi^{-1}(\CF_M))
	\xrightarrow{\sim}
	 \pi^{-1}(\Ho(\CF_M)),$$
{where the r.h.s.  denotes the pullback groupoid 	 $P{}_\pi \!\times_\bt \Ho(\CF_M) {}_\bs \!\times_\pi P$.}
\end{thm}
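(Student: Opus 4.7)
The plan is to define the map $\varphi$ explicitly via atlases of bisubmersions and then verify the isomorphism property. We are in the setting of Thm.~\ref{thm:sur.hol}: since $\Gamma_c(\ker d\pi) \subset \pi^{-1}\cF_M$, we have $\cF^{\text{big}} = \pi^{-1}\cF_M$. Let $\cU$ be a source connected atlas for $\pi^{-1}\cF_M$; by Lemma~\ref{lem:projgen2}~i) and Cor.~\ref{cor:adpt} (cf. Rem.~\ref{rem:S}), the family $\pi\cU = \{(U, \pi\circ\bt, \pi\circ\bs) : U \in \cU\}$ is an atlas for $\cF_M$ which is equivalent to a source connected one. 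For $u \in U \in \cU$, set
\[
\varphi([u]) := \bigl(\bt(u),\, [u]_M,\, \bs(u)\bigr),
\]
where $[u]_M \in \Ho(\cF_M)$ denotes the class of $u$ viewed inside $(U, \pi\circ\bt, \pi\circ\bs)$. This triple lies in $P \,{}_\pi\!\times_\bt \Ho(\cF_M) \,{}_\bs\!\times_\pi P$. Well-definedness is automatic: a local morphism of bisubmersions for $\pi^{-1}\cF_M$ commutes with $\bs, \bt$, hence also with $\pi\circ\bs, \pi\circ\bt$, and so descends to a morphism of the associated bisubmersions for $\cF_M$. That $\varphi$ is a continuous, open morphism of topological groupoids then follows from compatibility with the quotient maps from $\sqcup_{U\in\cU} U$ onto the respective holonomy groupoids.

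For surjectivity, let $(p, [v]_M, q) \in \pi^{-1}(\Ho(\cF_M))$. Realize $[v]_M$ by a path holonomy bisubmersion $V$ for $\cF_M$ generated by $X_1, \ldots, X_k \in \cF_M$. Lift each $X_i$ locally to a projectable $\widetilde X_i \in \pi^{-1}\cF_M$, and adjoin a pointwise basis $Y_1, \ldots, Y_\ell$ of $\ker d\pi \subset \pi^{-1}\cF_M$; the resulting path holonomy bisubmersion $\widetilde V$ for $\pi^{-1}\cF_M$ projects to a bisubmersion equivalent to $V$. By varying the $Y_j$-parameters within a fiber and exploiting the connectedness of the fibers of $\pi$, one can find $u \in \widetilde V$ with $\bs(u) = q$, $\bt(u) = p$, and $[u]_M = [v]_M$; then $\varphi([u]) = (p, [v]_M, q)$.

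Injectivity is the main obstacle. If $\varphi([u_1]) = \varphi([u_2])$, then $u_1, u_2$ share the same source $q$ and target $p$ in $P$, while $[u_1]_M = [u_2]_M$ in $\Ho(\cF_M)$. By Prop.~\ref{prop:equiv2} one must show that $u_1$ and $u_2$ carry the same local diffeomorphism of $P$ at $q$. They certainly carry local diffeomorphisms $f_1, f_2$ of $P$ both lifting the same local diffeomorphism $g$ of $M$ (carried by $[u_1]_M = [u_2]_M$) and with $f_1(q) = f_2(q) = p$. The discrepancy $h := f_2^{-1}\circ f_1$ is a local diffeomorphism of $P$ fixing $q$, preserving each fiber of $\pi$, and preserving $\pi^{-1}\cF_M$. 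The crux is to show that such an $h$ is carried at the unit bisection by a bisubmersion adapted to $\cU$; this uses that $\Gamma_c(\ker d\pi)$ lies inside $\pi^{-1}\cF_M$, so any vertical path holonomy bisubmersion is adapted to $\cU$, and connectedness of the $\pi$-fibers forces a fiber-preserving, fixed-point local diffeomorphism generated by such data to be equivalent to the identity. Carefully extracting this equivalence from the definitions, and checking that the continuous inverse of $\varphi$ so constructed is itself a groupoid morphism, is the technical heart of the proof.
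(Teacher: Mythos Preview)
First, note that the paper does not prove this theorem: it is quoted from \cite[Thm.~3.21]{MEsingfol}. The construction indicated in Rem.~\ref{rem:varphi} (which is the one used in \cite{MEsingfol}) goes in the opposite direction from yours. One starts with a path holonomy atlas $\cU$ for $\cF_M$, forms the \emph{pullback} bisubmersions $\pi^{-1}U:=P\,{}_\pi\!\times_\bt U\,{}_\bs\!\times_\pi P$, and shows that $\pi^{-1}\cU$ is a source connected atlas for $\pi^{-1}\cF_M$. The map $\varphi$ is then $[(p,u,q)]\mapsto (p,[u],q)$, and the problem reduces to checking that two points $(p_1,u_1,q_1)$ and $(p_2,u_2,q_2)$ of $\pi^{-1}U_1$, $\pi^{-1}U_2$ are equivalent (as points of bisubmersions for $\pi^{-1}\cF_M$) if and only if $p_1=p_2$, $q_1=q_2$ and $u_1\sim u_2$ in $\cU$. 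This is a localized computation in which the fiber directions and the $\cF_M$-directions are already separated by the very shape of the bisubmersions.

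Your approach (pushing a source connected atlas for $\pi^{-1}\cF_M$ down via $\pi\cU$) is legitimate for constructing $\Xi$ and showing surjectivity, but your injectivity argument has a genuine gap. You assert that the diffeomorphisms $f_1,f_2$ carried by $u_1,u_2$ ``lift the same local diffeomorphism $g$ of $M$''. This is not justified: a bisection of $(U,\bt,\bs)$ is a local section of $\bs$, whereas a bisection of $(U,\pi\circ\bt,\pi\circ\bs)$ is a local section of $\pi\circ\bs$; the two are unrelated in general, and a diffeomorphism $f_i$ of $P$ carried by $u_i$ need not descend to $M$ at all. One would have to arrange in advance that the bisections are $\pi$-projectable (possible for path holonomy bisubmersions built from projectable generators, but this needs to be said and controlled through compositions). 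Even granting that, the final step---that a fiber-preserving local diffeomorphism $h$ fixing $q$ is carried at an identity element of the atlas---is asserted rather than proved; it amounts to showing that $\ker\Xi$ has trivial isotropy, which is exactly the content you are trying to establish. The pullback-atlas route avoids both difficulties because equivalence in $\pi^{-1}U$ decouples by construction into the $P$-coordinates and equivalence in $U$.
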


\begin{rem}\label{rem:varphi}
 Let $\CU$ be a path holonomy atlas for $\CF_M$. Then {$\pi^{-1}\CU:=\{\pi^{-1}U:U\in \CU\}$, where}
$$\pi^{-1}U:=P{}_\pi \!\times_\bt U {}_\bs \!\times_\pi P,$$ is a {source connected} atlas for $\pi^{-1}(\CF_M)$, see \cite{MEsingfol}. We describe the   isomorphism $\varphi$  by $[(p,u,q)]\mapsto (p,[u],q)$.
\end{rem}

Our alternative description of the map $\Xi$  is as follows:

\begin{prop}\label{prop:normalq}
	Let $\pi \colon P \to M$ be a {surjective} submersion with connected fibers. Let $\cF$ be a singular foliation on $P$, such that $\Gamma_c(\ker d\pi) \subset \CF$. Denote by $\CF_M$ the unique singular foliation on $M$ such that $\pi^{-1}(\CF_M)=\CF$. 
Under the canonical isomorphism $\varphi\colon \Ho(\CF) \xrightarrow{\sim} \pi^{-1}(\Ho(\CF_M)) $  given in Thm. \ref{thm:Homeo}, the following two morphisms coincide:
\begin{itemize}
\item the morphism $\Xi\colon \Ho(\CF)\fto \Ho(\CF_M)$    given by Thm. \ref{thm:sur.hol},
\item the second projection $pr_2\colon \pi^{-1}(\Ho(\CF_M))=P\times_M \Ho(\CF_M) \times_MP\fto \Ho(\CF_M)$.
\end{itemize}
\end{prop}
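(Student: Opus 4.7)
The plan is to evaluate both maps on the  atlas $\pi^{-1}\CU$ of Rem. \ref{rem:varphi} and show they agree pointwise. Since every element of $\Ho(\CF)$ is the class of some $(p,u,q)\in \pi^{-1}U=P{}_\pi\!\times_\bt U{}_\bs\!\times_\pi P$ for some $U\in \CU$, it suffices to compute $\Xi([(p,u,q)])$ and $pr_2(\varphi([(p,u,q)]))$ separately and compare.

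First I would apply Rem.~\ref{rem:S} to the source connected atlas $\pi^{-1}\CU$ for $\CF$. Writing $\bt',\bs'$ for the source/target maps of $\pi^{-1}U$ (the first and third projections), this gives
$$\Xi([(p,u,q)])=[(p,u,q)]_M,$$
where the right-hand side is the equivalence class of $(p,u,q)$ regarded as an element of the bisubmersion $(\pi^{-1}U,\pi\circ\bt',\pi\circ\bs')$ for $\CF_M$. The key observation is then that the obvious projection
$$\mu\colon \pi^{-1}U\fto U,\qquad (p,u,q)\mapsto u,$$
is a smooth map intertwining source and target maps, since by definition of the fibered product $\bt\circ \mu(p,u,q)=\bt(u)=\pi(p)=\pi\circ\bt'(p,u,q)$ and similarly $\bs\circ \mu=\pi\circ\bs'$. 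Hence $\mu$ is a morphism of bisubmersions for $\CF_M$ from $(\pi^{-1}U,\pi\circ\bt',\pi\circ\bs')$ to $U\in \CU$, so by Def.~\ref{def:eq.rel} we obtain $[(p,u,q)]_M=[\mu(p,u,q)]=[u]$ in $\Ho(\CF_M)$.

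Combining with the explicit description $\varphi([(p,u,q)])=(p,[u],q)$ from Rem.~\ref{rem:varphi}, we conclude
$$pr_2(\varphi([(p,u,q)]))=[u]=\Xi([(p,u,q)]),$$
which is the desired equality. The step that requires a little care is the implicit use of the fact that $(\pi^{-1}U,\pi\circ\bt',\pi\circ\bs')$ really is a bisubmersion for $\CF_M$ (so that $[(p,u,q)]_M$ makes sense); this is precisely the content guaranteed by Lemma~\ref{lem:projgen2}~i) and Cor.~\ref{cor:adpt} invoked in Rem.~\ref{rem:S}, so no further work is needed beyond the elementary commutativity check above.
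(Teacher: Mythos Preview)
Your proof is correct and follows essentially the same approach as the paper's: both use the source-connected atlas $\pi^{-1}\CU$, apply the characterization of $\Xi$ from Rem.~\ref{rem:S}, and observe that the projection $(p,u,q)\mapsto u$ is a morphism of bisubmersions for $\CF_M$ from $(\pi^{-1}U,\pi\circ\bt',\pi\circ\bs')$ to $U$, yielding $\Xi([(p,u,q)])=[u]=pr_2(\varphi([(p,u,q)]))$. The only cosmetic difference is that the paper phrases the argument as showing $\Xi\circ\varphi^{-1}=pr_2$ starting from an element of $\pi^{-1}(\Ho(\CF_M))$, while you start from an element of $\Ho(\CF)$.
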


\begin{proof}
{Fix a path holonomy atlas $\CU_M$ for $\CF_M$. By Rem. \ref{rem:varphi} the family $\CU:=\pi^{-1}\CU_M$
is a {source connected} atlas for $\CF=\pi^{-1}(\CF_M)$. Moreover, one can show that for all $U\in \CU_M$ the triple $(\pi^{-1}(U),\pi\circ \bt,\pi\circ \bs)$ is a bisubmersion for $\CF_M$ .}
	
 We want to show that $\Xi\circ \varphi^{-1}=pr_2$. To this aim, take any $(p,{\zeta},q) \in \pi^{-1}(\Ho(\CF_M))$ (so in particular  ${\zeta}\in \Ho(\CF_M)$).
Fix a representative  $u\in U \in \CU$ such that $[u]={\zeta}$, {then $\varphi^{-1}(p,{\zeta},q)=[(p,u,q)]$} where $(p,u,q)\in \pi^{-1}U$. It is sufficient to show that 
\begin{equation}\label{eq:show}
\Xi([(p,u,q)])=[u].
\end{equation}
Note that $\hat{\pi}\colon (\pi^{-1}U,\pi\circ \bt,\pi\circ \bs)\fto U; (p,u,q)\mapsto u$ is a {morphism of} bisubmersions for $\CF_M$, therefore $(p,u,q)\in(\pi^{-1}U,\pi\circ \bt,\pi\circ \bs)$ is equivalent to $u\in (U,\bt,\bs)$. Then {eq. \eqref{eq:show} holds}
by the characterization of $\Xi$ given {in Rem. \ref{rem:S}}.
\end{proof}

\section{{Lie $2$-group actions on holonomy groupoids}}
\label{sec:quotgrouppull}

{We start reviewing Lie 2-groups and Lie 2-group actions.
	In \S\ref{subsec:ex.lie.2.grp} we present an important special case of Thm. \ref{thm:sur.hol} in which the map $\Xi$   is the quotient map of a Lie 2-group action on $\Ho(\CF)$ (see Thm. \ref{thm:act.q.fol} and Prop. \ref{prop:afterthm:act.q.fol}). We will revisit this special case later on, in \S\ref{subsec:alternative}.}

\subsection{{Background on Lie 2-groups}}\label{subsec:Lie.2.grp}

In the sequel will need the notion of Lie 2-group, which we recall here.

\begin{defi} 
{	A \textbf{Lie 2-group} is a group in the category of Lie groupoids.}
\end{defi} 
{In other words,  a Lie 2-group is a Lie groupoid $\CG\soutar G$ such that $\CG$ and $G$ are Lie groups, so that the group multiplication and group inverse are Lie groupoid morphisms,
and the inclusion of the neutral elements is a Lie groupoid morphism.}

{\begin{rem} {Equivalently, a Lie 2-group is a groupoid in the category of Lie groups.}
\end{rem}}

\begin{ex}\label{ex:GH}
Let $G$ be a Lie group and $H\subset G$ a normal Lie subgroup. Then $H$ acts on $G$ by left multiplication, leading to the action Lie groupoid $H\times G\soutar G$. {In particular, the groupoid composition is $$(h_2,h_1  g)\circ(h_1,g)=(h_2 h_1, g).$$}
Note that its  space of arrows has a group structure, namely the semidirect product by the conjugation action $C_g(h)=g h g^{-1}$ {of $G$ on $H$}. Explicitly, {the group multiplication} is given by
$$(h_1,g_1)\cdot(h_2,g_2)=(h_1C_{g_1} (h_2), g_1g_2).$$
{We write $H\rtimes G$ for $H\times G$ endowed with this group structure.}

{One can check that $H\rtimes G\soutar G$ is a Lie 2-group.}
\end{ex}

\begin{rem}\label{rem:xm} 
For the sake of completeness, we provide the description of a Lie 2-group in full generality.
A crossed module of Lie groups consists of  Lie groups   $H$ and $G$, Lie group morphisms $C\colon G\fto \mathrm{Aut}(H); g\mapsto C_g$ and $\bt\colon H\fto G$ such that $\bt(C_g(h))=g\bt(h)g^{-1}$ and $C_{\bt(h)}(j)=hjh^{-1}$ for all $g\in G$ and $h,j\in H$.
There is a bijection between Lie 2-groups and crossed {modules} of Lie groups \cite{CrossMod}.
Given a  Lie 2-group $\CG\soutar G$, the associated crossed module is given by $G$, by $H:=\ker(\bs)$ (a normal subgroup of  $\CG$), by the restriction $\bt\colon H\fto G$  of the target map, and the Lie group morphisms $C\colon G\fto \mathrm{Aut}(H); g\mapsto C_g(h):=g h {g^{-1}}$. Then $\cG$ as a Lie group is isomorphic to the semidirect product of $G$ and $H$ by the action $C$, and as a Lie groupoid it is isomorphic to the transformation groupoid of the $H$-action on $G$ by left multiplication with $\bt(\cdot)$.
\end{rem}

\begin{defi}
	A \textbf{Lie 2-group action} is a  {group} action in the category of Lie groupoids. 
\end{defi}

Hence an action of a Lie 2-group $\CG\soutar G$ on a Lie groupoid $\cH\soutar P$ consists of    {group} actions of $\cG$ on $\cH$ and of $G$ on $P$ such that the action map
\[\begin{tikzcd}
\cG \times \cH \ar[d, shift right=.2em, swap,"\bt\times \bt"]\ar[d,shift left=.2em,"\bs\times \bs"] \ar[r] & \cH \ar[d, shift right=.2em, swap,"\bt"]\ar[d,shift left=.2em,"\bs"] \\
G\times P \ar[r] & P
\end{tikzcd}\]
is a Lie groupoid map.
{Notice that such an action is not by Lie groupoid automorphisms of $\cH$. Nevertheless, the following result holds:}

\begin{prop}\label{prop:lie2grp.q}
 {Consider a free and proper action $\star$ of the  Lie 2-group}  $ \CG \soutar G$ on a Lie groupoid $\cH\soutar P$. 
	
Then $\cH':= \cH/\CG$ and $M:= P/G$ are manifolds, and $\cH'\soutar M$ acquires a canonical  Lie groupoid structure. Further  the projection $\cH\to \cH'$ is  a surjective submersion and Lie groupoid morphism.
\end{prop}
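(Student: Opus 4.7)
The plan is to carry out three stages: (i) show the quotient spaces are smooth manifolds; (ii) descend source, target, unit, and inverse maps; (iii) descend composition using the interchange law built into the hypothesis that the action is a Lie groupoid morphism.

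For (i), $\cH'=\cH/\CG$ is immediately a manifold (with $\pi_\cH\colon\cH\to\cH'$ a surjective submersion) by the standard quotient theorem for free proper Lie group actions. To obtain the same for $M=P/G$, I show the induced $G$-action on $P$ is free and proper. Freeness: if $g\cdot p=p$, then the unit $e_g\in\CG$ over $g$ satisfies $e_g\star 1_p=1_{g\cdot p}=1_p$, because the action map is a Lie groupoid morphism (so sends units to units), hence $e_g=1_\CG$ by freeness of the $\CG$-action, and therefore $g=1_G$. Properness is obtained by lifting a candidate non-convergent sequence $(g_n,p_n)$ in $G\times P$ to $(e_{g_n},1_{p_n})$ in $\CG\times\cH$ and invoking properness of the $\CG$-action.

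For (ii), the equivariances $\bs(\gamma\star h)=\bs(\gamma)\cdot\bs(h)$ and $\bt(\gamma\star h)=\bt(\gamma)\cdot\bt(h)$ (built into the Lie groupoid morphism hypothesis) make $\pi_P\circ\bs$ and $\pi_P\circ\bt$ constant on $\CG$-orbits, so they descend to smooth submersions $\bs',\bt'\colon\cH'\to M$; units and inverse descend analogously. For (iii), set $\cH^{(2)}:=\cH\,{}_\bs\!\times_\bt\cH$ and $\CG^{(2)}:=\CG\,{}_\bs\!\times_\bt\CG$, a closed Lie subgroup of $\CG\times\CG$ acting componentwise (freely and properly) on $\cH^{(2)}$. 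The natural map $\cH^{(2)}\to\cH'\times_M\cH'$, $(h_1,h_2)\mapsto([h_1],[h_2])$, is a surjective submersion whose fibers are exactly the $\CG^{(2)}$-orbits: surjectivity uses freeness of the $G$-action on $P$ to realign $\bs(h_1)$ with $\bt(h_2)$ inside a common $G$-orbit (take $\tilde h_2=e_g\star h_2$ for the unique $g\in G$ with $\bs(h_1)=g\cdot\bt(h_2)$), and the fiber description uses the same freeness. This yields a diffeomorphism $\cH^{(2)}/\CG^{(2)}\cong\cH'\times_M\cH'$. The interchange law
$$(\gamma_1\star h_1)\circ(\gamma_2\star h_2)\;=\;(\gamma_1\circ\gamma_2)\star(h_1\circ h_2),$$
which is precisely the statement that the action map is a Lie groupoid morphism, makes $\circ\colon\cH^{(2)}\to\cH$ equivariant over $\CG^{(2)}\to\CG$; hence it descends to a smooth composition on $\cH'$. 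The groupoid axioms on $\cH'$ follow from those on $\cH$ by surjectivity of $\pi_\cH$, which is a Lie groupoid morphism over $\pi_P$ by construction.

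The main obstacle is the identification $\cH^{(2)}/\CG^{(2)}\cong\cH'\times_M\cH'$ in (iii), which is what lets composition descend smoothly. Once this identification is in place, the interchange law supplies the needed equivariance and the remainder is formal; verifying freeness and properness of the $\CG^{(2)}$-action on $\cH^{(2)}$ (inherited from the componentwise action of $\CG\times\CG$ on $\cH\times\cH$) is the principal technical check.
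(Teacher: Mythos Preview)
Your argument is correct. The paper, however, does not give a full proof of this proposition; it only sketches one in a footnote, and the sketch follows a different route. The paper's approach is to package the orbit equivalence relations as a smooth congruence in Mackenzie's sense: set $R=\{(gp,p)\in P\times P : p\in P,\, g\in G\}$ and $\CR=\{(\gamma\star\xi,\xi)\in\cH\times\cH : \xi\in\cH,\, \gamma\in\CG\}$, verify that $(\CR,R)$ satisfies Definition~\ref{def:smt.cong}, and then invoke Theorem~\ref{thm:norm.sub.sys} to obtain the quotient Lie groupoid together with the fact that the projection is a fibration.

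Your approach is more elementary and self-contained: you build the quotient structure maps by hand, and the key step---identifying $\cH^{(2)}/\CG^{(2)}$ with $\cH'\times_M\cH'$ so that the interchange law yields the descended composition---makes transparent exactly which feature of a Lie 2-group action is doing the work. The paper's route, by contrast, hides this inside the verification that $(\CR,R)$ is a congruence, but in exchange it leverages an existing general theorem and yields the stronger conclusion (fibration rather than merely surjective submersion and groupoid morphism) for free. One minor point: you assert that $\cH^{(2)}\to\cH'\times_M\cH'$ is a submersion without justification; this follows, for instance, by exhibiting local sections or by noting that the induced bijection $\cH^{(2)}/\CG^{(2)}\to\cH'\times_M\cH'$ is a smooth bijection between manifolds of equal dimension whose inverse can be checked to be smooth via local sections of $\pi_\cH$.
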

Although we do not need the above result\footnote{One can prove Prop. \ref{prop:lie2grp.q}, {and also the stronger statement that the projection is a fibration (Def. \ref{def:fibrLie})}. To do so, one can follow \cite[\S2.4]{MK2}: define $R=\{(gp,p)\in P\times P \st p\in P \y g\in G\}$ and $\CR=\{((h,g)\star\xi, \xi)\in\cH\times \cH \st \xi\in \cH \y (h,g)\in \CG\}$, and show that $(\CR,R)$ is a smooth congruence for $\cH$.  Then  Thm. \ref{thm:norm.sub.sys} gives the desired conclusion. See \cite[\S 5.3]{T.ALF} for more details.}, we mention it here because it puts in perspective Thm. \ref{thm:act.q.fol} below.

\subsection{Lie 2-group action on the holonomy groupoid of a pullback foliation}\label{subsec:ex.lie.2.grp}

Fix  a foliated manifold $(P,\cF)$ and a free and proper action of a connected Lie group $G$ on $P$ preserving $\CF$. {We denote
the quotient map by $\pi\colon M \fto M/G$.}
We assume that the infinitesimal generators of the $G$-action lie in the global hull $\widehat{\CF}$, i.e.  $\Gamma_c(\ker d\pi)\subset \CF$. This occurs exactly when $\cF$ is the pullback of $\cF_M$ by $\pi$, as in \S \ref{subsec:char}.

\begin{thm}\label{thm:act.q.fol}
Let $G$ be a connected Lie group acting freely and properly on a foliated manifold $(P,\CF)$. 
Assume {that $\Gamma_c(\ker d\pi)\subset \CF$.}  

Then there is a canonical Lie 2-group action\footnote{Here we use the term ``Lie 2-group action'' in a   loose way, since $\Ho(\CF)$ is generally not a Lie groupoid.}
 of $G\rtimes G\soutar G$  on the holonomy groupoid $\Ho(\CF)$.
\end{thm}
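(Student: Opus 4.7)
The plan is to use the pullback description of $\Ho(\CF)$ to write down the action explicitly. By the assumption $\Gamma_c(\ker d\pi)\subset \CF$, Theorem \ref{thm:Homeo} provides a canonical isomorphism of topological groupoids
$$\varphi\colon \Ho(\CF)\xrightarrow{\sim} P\times_M \Ho(\CF_M)\times_M P,$$
so it suffices to construct a Lie $2$-group action on the right-hand side. Recall from Ex. \ref{ex:GH} that in $G\rtimes G\rightrightarrows G$ the structure maps are $\bs(h,g)=g$ and $\bt(h,g)=hg$; these formulas, together with the source/target compatibility required of a Lie $2$-group action, essentially force the shape of the formula on arrows.

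On objects, I take the given $G$-action on $P$. On arrows I define
$$(h,g)\star (p,\zeta,q):=(hg\cdot p,\;\zeta,\;g\cdot q).$$
This is well-defined in the fibered product because $G$ acts along the $\pi$-fibers, so $\pi(hg\cdot p)=\pi(p)=\bt(\zeta)$ and $\pi(g\cdot q)=\pi(q)=\bs(\zeta)$. The group action axioms follow from a short calculation: expanding the semidirect product $(h_1,g_1)\cdot (h_2,g_2)=(h_1 g_1 h_2 g_1^{-1},\,g_1 g_2)$ yields
$$\big((h_1,g_1)(h_2,g_2)\big)\star(p,\zeta,q)=(h_1 g_1 h_2 g_2\cdot p,\,\zeta,\,g_1 g_2\cdot q),$$
which is precisely $(h_1,g_1)\star\big((h_2,g_2)\star(p,\zeta,q)\big)$, while the identity element acts trivially.

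It remains to verify that $\star$ is a morphism of topological groupoids covering the $G$-action on $P$. Source/target compatibility is built into the definition: $\bs\big((h,g)\star(p,\zeta,q)\big)=g\cdot q=g\cdot \bs(p,\zeta,q)$ and $\bt\big((h,g)\star(p,\zeta,q)\big)=hg\cdot p=hg\cdot \bt(p,\zeta,q)$. For multiplicativity, I take composable pairs $(h_2,g_2)\circ(h_1,g_1)=(h_2 h_1,g_1)$ (which forces $g_2=h_1 g_1$) and $(p_1,\zeta_1,q_1)\circ(p_2,\zeta_2,q_2)=(p_1,\zeta_1\circ\zeta_2,q_2)$ (which forces $q_1=p_2$), and then check that applying $\star$ to the product of these pairs agrees with the product of the individual actions. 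Using $g_2=h_1 g_1$, both expressions collapse to $(h_2 h_1 g_1\cdot p_1,\,\zeta_1\circ\zeta_2,\,g_1\cdot q_2)$. Continuity of $\star$ is immediate from the continuity of the $G$-action on $P$ together with the subspace topology on the pullback groupoid.

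The only conceptual obstacle is locating the correct formula on arrows so that both source/target matching requirements of a Lie $2$-group action are satisfied simultaneously; once this is pinned down, every verification above is a short direct computation. As a sanity check on the choice of $\star$, note that quotienting by this action fixes $\zeta$ while letting $p$ and $q$ range over their entire $\pi$-fibers, so the orbit space is naturally $\Ho(\CF_M)$, with the quotient map corresponding under $\varphi$ to the morphism $\Xi$ of Prop. \ref{prop:normalq}. This is in line with the motivating claim that $\Xi$ is realized as a Lie $2$-group quotient.
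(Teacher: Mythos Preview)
Your proof is correct and follows essentially the same approach as the paper: both use the isomorphism $\varphi\colon \Ho(\CF)\xrightarrow{\sim}\pi^{-1}(\Ho(\CF_M))$ from Thm.~\ref{thm:Homeo} and define the identical action $(h,g)\star(p,\zeta,q)=(hg\cdot p,\zeta,g\cdot q)$. The only minor difference is in the verification: where you check the group-action and groupoid-morphism axioms by direct computation in $G\rtimes G$, the paper instead passes through the Lie 2-group isomorphism $G\rtimes G\cong G\times G$ (pair groupoid with product group structure) given by $(h,g)\mapsto(hg,g)$, under which the action becomes the transparently-valid $((a,b),(p,\zeta,q))\mapsto(a\cdot p,\zeta,b\cdot q)$.
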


 Here $G\rtimes G\soutar G$ is endowed with the Lie 2-group structure of Ex. \ref{ex:GH}.

\begin{proof} We make use {of} the canonical isomorphism $\Ho(\CF) \cong \pi^{-1}(\Ho(\CF_{M}))$ given in  Thm. \ref{thm:Homeo}.

There is a canonical Lie $2$-group action of $G\rtimes G$ on   $\pi^{-1}(\Ho(\CF_{M}))$, extending the given action of $G$ on the base $P$,
 given by
\begin{equation}\label{eq:star}
(h,g)* (p,[v],q)=(hgp,[v],gq).
\end{equation}
It can be checked by computations that this defines a group action and groupoid morphism. Alternatively, we can use the isomorphism of Lie 2-groups to    $G \times G \soutar G$ (the pair groupoid, with product group structure) given by  $G\rtimes G\cong G\times G, (h,g)\mapsto (hg,g)$. Under this isomorphism,   
\eqref{eq:star} becomes 
$$ (G\times G)\times {\pi^{-1}(\Ho(\CF_{M}))}\to {\pi^{-1}(\Ho(\CF_{M}))},\;\; \left((h,g),(p,[v],q)\right) \mapsto (hp,[v],gq),$$
which is easily checked to   be a Lie $2$-group action.
\end{proof}
 
\begin{prop}\label{prop:afterthm:act.q.fol}
Assume the set-up of Thm. \ref{thm:act.q.fol}.

The orbits of the Lie 2-group action of $G\rtimes G\soutar G$ on $\Ho(\CF)$ are exactly the fibers of the canonical map $\Xi \colon \Ho(\CF)\fto \Ho(\CF_M)$. In particular, the quotient of $\Ho(\CF)$ by the action is canonically isomorphic to $\Ho(\CF_M)$.
	  
\end{prop}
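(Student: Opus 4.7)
The strategy is to transport the question through the isomorphism $\varphi\colon \Ho(\CF)\xrightarrow{\sim}\pi^{-1}(\Ho(\CF_M))$ of Thm. \ref{thm:Homeo}, where the action and the map $\Xi$ both admit very concrete descriptions. By Prop. \ref{prop:normalq}, $\Xi$ corresponds to the second projection $pr_2\colon P\times_M \Ho(\CF_M)\times_M P\to \Ho(\CF_M)$, and by eq. \eqref{eq:star} the action of $G\rtimes G$ on $\pi^{-1}(\Ho(\CF_M))$ reads $(h,g)\ast(p,[v],q)=(hgp,[v],gq)$.

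First I would compute the orbits. Fix $(p,[v],q)\in \pi^{-1}(\Ho(\CF_M))$. Since the $G$-action on $P$ is free and its orbits are exactly the $\pi$-fibers, the set $\{(hgp,[v],gq):h,g\in G\}$ equals $\{(p',[v],q'):\pi(p')=\pi(p),\ \pi(q')=\pi(q)\}$. On the other hand, the fiber of $pr_2$ over $[v]$ consists of all $(p',[v],q')$ satisfying the compatibility conditions $\pi(p')=\bt_M([v])=\pi(p)$ and $\pi(q')=\bs_M([v])=\pi(q)$. Hence the orbit of $(p,[v],q)$ coincides with the $pr_2$-fiber through it, proving the first assertion.

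Next I would deduce the isomorphism statement. Since orbits and fibers coincide, $\Xi$ descends to a bijection $\overline{\Xi}\colon \Ho(\CF)/(G\rtimes G)\to \Ho(\CF_M)$, which is surjective because $\Xi$ is surjective by Thm. \ref{thm:sur.hol}. The same theorem asserts that $\Xi$ is open, so $\Xi$ is a topological quotient map; consequently $\overline{\Xi}$ is a homeomorphism when the left-hand side carries the quotient topology. Finally, $\Xi$ is a morphism of topological groupoids, so $\overline{\Xi}$ is a morphism as well, and hence an isomorphism in the category of topological groupoids.

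There is no genuine obstacle here: the content of the proposition is essentially a bookkeeping consequence of the explicit form \eqref{eq:star} of the action combined with the identification of $\Xi$ with $pr_2$. The only mild subtlety is making sure that, once the orbits have been identified with the $\Xi$-fibers, the quotient inherits the correct topological-groupoid structure, which follows from the openness of $\Xi$.
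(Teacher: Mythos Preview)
Your proposal is correct and follows exactly the paper's approach: the paper's proof is a single sentence observing that formula \eqref{eq:star} makes the orbits transparent and Prop.~\ref{prop:normalq} identifies $\Xi$ with $pr_2$, which is precisely what you spell out in more detail. Your additional paragraph justifying the ``in particular'' statement via openness of $\Xi$ is a welcome elaboration of something the paper leaves implicit.
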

\begin{proof}
{The formula   \eqref{eq:star} makes clear what the orbits are, and Prop. \ref{prop:normalq} shows that they agree with the $\Xi$-fibers.}
\end{proof}

\section{{Quotients of foliations by group actions: the general case}}
\label{sec:general}
In this section we consider the following set-up:
\begin{center}
	\fbox{
		\parbox[c]{12.6cm}{\begin{center}
				a foliated manifold $(P,\cF)$,\\
				a free and proper action of a connected Lie group $G$ on $P$ preserving $\cF$.\end{center}
	}}
\end{center}
Condition \eqref{eq:bracketpres} in Thm. \ref{thm:sur.hol} is satisfied. Hence we obtain an {open} surjective groupoid morphism 
\begin{equation}\label{eq:Xi}
\Xi\colon \Ho(\CF)\fto \Ho(\CF_M)  
\end{equation}
 covering the projection   $\pi \colon P\to M:=P/G$, where the latter is endowed with the foliation $\CF_M$ specified there.

Unlike the special case considered in \S \ref{subsec:ex.lie.2.grp},  {$\Gamma_c(\ker d\pi)$ may not be contained in $\CF$, hence} the $\Xi$-fibers are not the orbits of a Lie 2-group action in general. In this section we make two general statements about the $\Xi$-fibers.

In \S\ref{sec:groidorbits},  {we lift} the $G$-action on $P$ to an action on $\Ho(\CF)$ by groupoid automorphisms.  {Using this action, later in Proposition \ref{cor:grpd.act.hol}} we can characterize the fibers of $\Xi$ as the orbits of a \emph{groupoid}  action. 

In \S \ref{subsec:2gract}, we establish the existence of a canonical   \emph{Lie 2-group action} on $\Ho(\CF)$ whose orbits lie inside the $\Xi$-fibers, but which might fail to be the whole fiber (see Prop. \ref{prop:Lie.2.grpACTION} and Cor. \ref{prop:Lie.2.grpFIBER}).

\subsection{The lifted group action}\label{sec:groidorbits}

We show that the $G$ action on $P$ admits a canonical lift to $\Ho(\CF)$. We start with the following lemma.

\begin{lem}\label{def:gW}
Let $\widehat{g}\colon P\fto P$ be the diffeomorphism given by the action of $g\in G$. Take a path holonomy atlas $\CU$ and a bisubmersion $W\in \CU$. The  triple $$gW:=(W,\bt_g:=\widehat{g}\circ\bt, \bs_g:=\widehat{g}\circ\bs)$$ is a bisubmersion. Moreover $gW$ is adapted to $\CU$.
\end{lem}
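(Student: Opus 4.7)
The plan is to verify the two claims separately: first that $gW$ is a bisubmersion, using the $G$-invariance of $\CF$; then that it is adapted to $\CU$, by constructing a morphism of bisubmersions from $gW$ to a path holonomy bisubmersion obtained from $W$ by $\widehat{g}$-conjugating its generating vector fields.

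For the bisubmersion property, first observe that $\bt_g = \widehat{g}\circ\bt$ and $\bs_g = \widehat{g}\circ\bs$ are submersions with $\ker d\bs_g = \ker d\bs$ and $\ker d\bt_g = \ker d\bt$, so the $\SEC_c(\ker d\bs)+\SEC_c(\ker d\bt)$ side of the defining identity is unchanged. For the pullback sides, the crucial input is $\widehat{g}_*\CF = \CF$: for any $Y\in \vX_c(W)$, the relation $d\bs_g(Y)=d\widehat{g}\bigl(d\bs(Y)\bigr)$ combined with the $\widehat{g}$-invariance of $\CF$ shows that $d\bs_g(Y)$ lies in $\bs_g^*\CF$ if and only if $d\bs(Y)$ lies in $\bs^*\CF$, hence $\bs_g^{-1}(\CF) = \bs^{-1}(\CF)$; the analogous identity holds for $\bt$. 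The bisubmersion identity for $W$ thus transfers directly to $gW$.

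For the adapted claim, I would reduce to the case in which $W$ is a path holonomy bisubmersion, since the $g$-twist commutes with inverses and compositions, i.e.\ $g(W_1\circ W_2)=gW_1\circ gW_2$ and $g(W_1^{-1})=(gW_1)^{-1}$, and $\CU$ is generated from path holonomy bisubmersions under these operations. In the path holonomy case, write $W$ locally as an open subset of $\RR^k\times P$ with $\bt(v,p)=\exp(\sum v_iX_i)(p)$ and $\bs(v,p)=p$, where $X_1,\dots,X_k\in\CF$ descend to a basis of $\CF_{p_0}$. Setting $Y_i:=\widehat{g}_*X_i$, which still lie in $\CF$ and descend to a basis of $\CF_{\widehat{g}(p_0)}$, let $W'$ be the path holonomy bisubmersion at $\widehat{g}(p_0)$ generated by the $Y_i$. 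The formula $\mu(v,p):=(v,\widehat{g}(p))$ defines a map $W\to W'$ which, by the flow-conjugation identity $\widehat{g}\circ\exp\bigl(\sum v_iX_i\bigr)=\exp\bigl(\sum v_iY_i\bigr)\circ\widehat{g}$, intertwines $\bs_g$ with $\bs_{W'}$ and $\bt_g$ with $\bt_{W'}$. Hence $\mu$ is a morphism of bisubmersions $gW\to W'$, and since $W'$ is path holonomy it is adapted to $\CU$; this proves $gW$ is too.

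The main technical nuisance will be the bookkeeping behind the reduction to path holonomy bisubmersions, together with the verification that the flow-conjugation identity behaves well under the composition and inversion of bisubmersions. Each step is a routine consequence of the $G$-invariance of $\CF$, but has to be stated carefully to cover arbitrary elements of a path holonomy atlas rather than merely generators.
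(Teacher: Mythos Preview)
Your proposal is correct and follows essentially the same approach as the paper: both arguments use $\widehat{g}$-invariance of $\CF$ for the bisubmersion property, reduce the adapted claim to path holonomy bisubmersions via $g(W_1\circ W_2)=gW_1\circ gW_2$, and then exhibit the map $(v,p)\mapsto(v,\widehat{g}(p))$ as an isomorphism from $gW$ to the path holonomy bisubmersion $W'$ built from the pushed-forward generators $\widehat{g}_*X_i$. Your write-up is slightly more explicit (you also note the inverse compatibility $g(W_1^{-1})=(gW_1)^{-1}$ and spell out the flow-conjugation identity), but there is no substantive difference.
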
  
\begin{proof}
{Because the $G$-action preserves $\cF$, the pullback foliation $\widehat{g}^{-1}\CF$ equals $\CF$, implying that $gW$ is a bisubmersion.}

{We prove that $gW$ is adapted to $\CU$. Notice that   $g(W_1\circ W_2)=gW_1\circ gW_2$ for any $W_1,W_2\in \CU$. Hence it is sufficient to assume that $W$ is a path holonomy bisubmersion, as any element in the path-holonomy atlas is a composition of such elements.

Denote by $v_1,\cdots,v_n\in \CF$ the vector fields that give rise to the path holonomy bisubmersion $W$ (hence $W\subset \RR^n\times P$). Consider the push-forward vector fields $\widehat{g}_*v_1,\cdots,\widehat{g}_*v_n \in \CF$. The associated path-holonomy bisubmersion is defined on
 $$W':=\{(v,gp)\st (v,p)\in W\}\subset \RR^n\times P.$$
Since $gW\fto W', (v,p)\mapsto (v,gp)$ is an isomorphism of bisubmersions and since $W'$ is adapted to $\CU$ (being a path holonomy bisubmersion), we conclude that $gW$ is adapted to $\CU$.}
\end{proof}

Now {we introduce} the {\bf lifted action} 

\begin{equation}\label{eq:vecstar}
  \vec{\star}\colon{G} \times \Ho(\cF)\fto \Ho(\cF), \hspace{1cm}
  g\vec{\star} [v]:=[v]_{gW}  
\end{equation}
where, for any $v$ in a path holonomy bisubmersion $W$, we denote by 
{$[v]_{gW}$}  the class of $v$ regarded as an element of $gW$.
 {This is clearly well-defined and indeed a Lie group action.}
{Further, this action is by groupoid automorphisms:}

{\begin{lem}\label{lem:cover}
	For all $g\in G$ the map $g\vec{\star} (-)\colon \Ho(\cF)\fto \Ho(\cF)$ is a groupoid morphism covering the diffeomorphism $\widehat{g}\colon P\fto P$. 
\end{lem}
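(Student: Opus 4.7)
The plan is to verify the three defining properties of a groupoid morphism covering $\widehat{g}$: well-definedness on equivalence classes, compatibility with source and target (including units), and multiplicativity. The unifying observation is that $gW$ has the same underlying manifold as $W$, with source/target maps merely post-composed with the diffeomorphism $\widehat{g}$, so each step reduces to a corresponding statement for $W$ itself.

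First I would establish well-definedness. If $[v]=[v']$ for $v\in W$, $v'\in W'$ path holonomy bisubmersions, Def.~\ref{def:eq.rel} provides a local morphism $\mu\colon W\supset W''\fto W'$ with $\mu(v)=v'$. Post-composing the identities $\bs'\circ\mu=\bs$ and $\bt'\circ\mu=\bt$ with $\widehat{g}$ yields $\bs'_g\circ\mu=\bs_g$ and $\bt'_g\circ\mu=\bt_g$, so the very same $\mu$ is a local morphism of bisubmersions $gW\supset W''\fto gW'$; hence $[v]_{gW}=[v']_{gW'}$ in $\Ho(\CF)$. That $g\vec{\star}$ covers $\widehat{g}$ on objects is then immediate from the definition of $gW$. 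To check preservation of units, if a bisection $\sigma\colon V\fto W$ through $v$ carries the identity near $p$ (so $\bs\circ\sigma=\bt\circ\sigma=id_V$), then $\widetilde{\sigma}:=\sigma\circ\widehat{g}^{-1}\colon\widehat{g}(V)\fto W$ is a $\bs_g$-section of $gW$ through $v$ satisfying $\bt_g\circ\widetilde{\sigma}=id_{\widehat{g}(V)}$, so $[v]_{gW}$ is the unit at $\widehat{g}(p)$.

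The main step, and the only one requiring anything beyond direct substitution, is multiplicativity. It rests on the identity
$$g(W_1\circ W_2)=gW_1\circ gW_2$$
of bisubmersions, which I would verify by noting that the underlying fibre products $W_1{}_{\bs_1}\!\times_{\bt_2}W_2$ and $W_1{}_{\widehat{g}\circ\bs_1}\!\times_{\widehat{g}\circ\bt_2}W_2$ coincide (since $\widehat{g}$ is a diffeomorphism, the equations $\bs_1(u_1)=\bt_2(u_2)$ and $\widehat{g}\circ\bs_1(u_1)=\widehat{g}\circ\bt_2(u_2)$ cut out the same set), and the outer source/target maps on both sides agree with $\widehat{g}\circ\bt_1$ and $\widehat{g}\circ\bs_2$. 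Multiplicativity then follows from
$$(g\vec{\star}[v_1])\circ(g\vec{\star}[v_2])=[(v_1,v_2)]_{gW_1\circ gW_2}=[(v_1,v_2)]_{g(W_1\circ W_2)}=g\vec{\star}([v_1]\circ[v_2]).$$
I do not anticipate a genuine obstacle: all three steps are formal consequences of $\widehat{g}$ being a foliation-preserving diffeomorphism, a fact already exploited in Lemma~\ref{def:gW} to ensure $gW$ is a bisubmersion adapted to the fixed atlas (which is what makes the classes $[v]_{gW}$ well-defined elements of $\Ho(\CF)$ in the first place).
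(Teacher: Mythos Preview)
Your proof is correct and follows essentially the same approach as the paper: both arguments reduce the source/target compatibility to the definition of $gW$ and derive multiplicativity from the identity $g(W_1\circ W_2)=gW_1\circ gW_2$. Your version is simply more explicit, spelling out well-definedness (which the paper dismisses as ``clearly well-defined'' just before the lemma) and the preservation of units.
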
}
 
\begin{proof}
 Using the construction of $g\vec{\star}(-)$ it is clear that 
 the source and target map commute with the map $\widehat{g}$. {Further
 $g\vec{\star}(-)$ preserves the groupoid composition since
  $g(W_1\circ W_2)=gW_1\circ gW_2$ for any path-holonomy bisubmersions $W_1,W_2$.} 
\end{proof}

{\begin{lem}\label{lem:orbits}
The  orbits of the lifted action $\vec{\star}$ lie in the fibers of $\Xi:\Ho(\CF)\fto \Ho(\CF_M)$.
\end{lem}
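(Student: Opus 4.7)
The plan is to show directly that $\Xi(g\vec{\star}[v])=\Xi([v])$ for every $g\in G$ and $[v]\in \Ho(\CF)$, from which the lemma will follow. The key observation I will use is that $\pi\circ\widehat{g}=\pi$ for every $g\in G$, since $\pi\colon P\to M=P/G$ is the $G$-orbit projection. In particular, for any path holonomy bisubmersion $W$, the two triples
$$(W,\pi\circ\bt,\pi\circ\bs)\quad\text{and}\quad (gW,\pi\circ\bt_g,\pi\circ\bs_g)$$
will coincide \emph{as bisubmersions for} $\CF_M$; i.e. projecting $gW$ along $\pi$ produces literally the same $\CF_M$-bisubmersion as projecting $W$.

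I would fix a path holonomy atlas $\CU$ for $\CF$ and a representative $v\in W\in\CU$ of $[v]$. By the characterization in Rem.~\ref{rem:S}, $\Xi([v])$ is the class of $v$ in $(W,\pi\circ\bt,\pi\circ\bs)$. To compute $\Xi(g\vec{\star}[v])=\Xi([v]_{gW})$, I would invoke Lemma~\ref{def:gW}: since $gW$ is adapted to $\CU$, there exist a (finite composition of elements of $\CU$, hence adapted) bisubmersion $W'$ and an element $v'\in W'$ equivalent to $v\in gW$ via some local morphism of $\CF$-bisubmersions $\mu$. Hence $[v]_{gW}=[v']_{W'}$ in $\Ho(\CF)$, and Rem.~\ref{rem:S} (applied to a source connected atlas containing $W'$) identifies $\Xi([v']_{W'})$ with the class of $v'$ in $(W',\pi\circ\bt',\pi\circ\bs')$. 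Post-composing $\mu$ with $\pi$ on both source and target sides promotes it to a local morphism of $\CF_M$-bisubmersions $(gW,\pi\circ\bt_g,\pi\circ\bs_g)\to (W',\pi\circ\bt',\pi\circ\bs')$, so the classes of $v$ and $v'$ in $\Ho(\CF_M)$ also agree. Combined with the displayed identification from the first paragraph, this gives $\Xi(g\vec{\star}[v])=\Xi([v])$.

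The only subtlety will be that $gW$ is not a priori an element of the chosen path holonomy atlas $\CU$; this is handled by transferring $v$ to an equivalent $v'$ in a bisubmersion adapted to $\CU$ and then using that equivalences of $\CF$-bisubmersion elements descend, under post-composition with $\pi$, to equivalences of the associated $\CF_M$-bisubmersion elements. Apart from this routine bookkeeping, the argument is a direct unwinding of the definition of $\vec{\star}$ in \eqref{eq:vecstar} together with the characterization of $\Xi$ in Rem.~\ref{rem:S}, and no deeper input is required.
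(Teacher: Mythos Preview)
Your proof is correct and follows essentially the same approach as the paper: both arguments hinge on the identity $\pi\circ\widehat{g}=\pi$, which forces $(gW,\pi\circ\bt_g,\pi\circ\bs_g)=(W,\pi\circ\bt,\pi\circ\bs)$ as $\CF_M$-bisubmersions, and then invoke the characterization of $\Xi$ in Rem.~\ref{rem:S}. The paper's version is terser in that it applies Rem.~\ref{rem:S} directly to $gU$ without comment (implicitly enlarging the source connected atlas to contain the $gU$'s, which is legitimate since each $gU$ is adapted to $\CU$ by Lemma~\ref{def:gW}); your detour through an equivalent $v'\in W'\in\CU$ makes this step explicit but is otherwise the same idea.
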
}
\begin{proof}{Let $\CU$ be a source connected atlas for $\CF$.} 
The morphism $\Xi$ is induced by the identity map $\CU$ to $\pi\CU$,  see  the characterization of $\Xi$ given in Rem. \ref{rem:S}.

 Fix $g\in G$, and $u\in U\in \CU$.
  {By the above and since $\pi \circ \widehat{g}=\pi$,} the images under $\Xi$ of both $[u]$ and $g\vec{\star} [u]=[u]_{gU}$
are the class of the element $u\in (U, \pi\circ\bt, \pi\circ\bs)$. In particular,
$\Xi([u])=\Xi(g\vec{\star} [u])$, showing the desired statement.
\end{proof}

 \begin{ex}\label{ex:ri.regfol}({\bf Regular foliations}) {As discussed in Rem. \ref{rem:hol.regfol}, if $\CF$ is a regular foliation then $\CH(\CF)$ coincides with the classical notion of  holonomy groupoid given by holonomy classes of paths in the leaves. The Lie group $G$ acts canonically on $\CH(\CF)$ 
{by translating paths}: $G\times \CH(\CF)\fto \CH(\CF); (g,[\gamma]_{hol})\mapsto [g\gamma]_{hol}$. It is easy to see that this action agrees with the lifted action 
\eqref{eq:vecstar}, i.e. that
$[g\gamma]_{hol}=g\vec{\star}[\gamma]_{hol}$, because $[g\gamma]_{hol}\in (\CH(\CF),\bt,\bs)$ is equivalent to $[\gamma]_{hol}\in (\CH(\CF),\bt_g, \bs_g)$.} 
{ Moreover, using the characterization of $\Xi$ given by Prop. \ref{prop:regXi} it is clear that $\Xi(g\vec{\star}[\gamma]_{hol})=\Xi([\gamma]_{hol})$, in accordance with Lemma \ref{lem:orbits}.}
 \end{ex}

\subsection{{A canonical Lie 2-group action on the holonomy groupoid}}\label{subsec:2gract}

In this subsection we prove that there always is a Lie 2-group action on $\Ho(\CF)$ whose orbits lie inside the fibers of the morphism $\Xi$. In general however the orbits do not coincide with the (connected components of) the $\Xi$-fibers.
The formulae for this Lie 2-group action are suggested by the special case we will spell out in \S\ref{subsec:alternative}.

Denote the Lie algebra of $G$ by $\g$, and by $v_x\in \vX(P)$ the generator of the action corresponding to $x\in \g$.

\begin{lem}\label{lem:ideal}
The subspace $\h:=\{x\in \g: v_x\in \widehat{\cF}\}$ is a Lie ideal of $\g$.
\end{lem}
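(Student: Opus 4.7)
The plan is to deduce the ideal property from Ad-invariance of $\h$ under the adjoint action of $G$, which in turn will follow from the fact that the $G$-action on $P$ preserves the global hull $\widehat{\CF}$.

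First I would observe that $\h$ is a vector subspace of $\g$: the map $x \mapsto v_x$ is $\RR$-linear, and $\widehat{\CF}$ is an $\RR$-vector subspace of $\vX(P)$, so $\h$ is the preimage of a subspace under a linear map, hence a subspace. Being finite-dimensional, $\h$ is automatically closed in $\g$.

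Next I would check that $\widehat{\CF}$ is $G$-invariant. Writing $\widehat{g}\colon P \fto P$ for the diffeomorphism given by $g\in G$, and taking $X\in \widehat{\CF}$ and $f\in \CI_c(P)$, one has
$$ f\cdot (\widehat{g}_{*} X) \;=\; \widehat{g}_{*}\!\bigl((f\circ \widehat{g})\, X\bigr) \;\in\; \widehat{g}_{*}\CF \;=\; \CF, $$
since $f\circ \widehat{g}\in \CI_c(P)$ and the $G$-action preserves $\CF$. This shows $\widehat{g}_{*}X\in \widehat{\CF}$.

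Combining these two observations gives the result. For fundamental vector fields one has $\widehat{g}_{*} v_y = v_{\mathrm{Ad}_g y}$, so if $y\in \h$ then $v_{\mathrm{Ad}_g y}\in \widehat{\CF}$, i.e.\ $\mathrm{Ad}_g y\in \h$. Hence the smooth curve $t\mapsto \mathrm{Ad}_{\exp(tx)} y$ lies entirely in the closed subspace $\h$, and differentiating at $t=0$ yields $[x,y]\in \h$ for every $x\in \g$. This gives $[\g,\h]\subset \h$, so $\h$ is a Lie ideal (and in particular a Lie subalgebra, since $[\h,\h]\subset [\g,\h]\subset \h$). The one non-formal step is propagating $G$-invariance from $\CF$ to $\widehat{\CF}$; once that is in hand the remainder is routine, so I do not anticipate a serious obstacle.
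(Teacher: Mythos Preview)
Your argument is correct, but it takes a different path from the paper's. The paper works infinitesimally from the outset: from $G$-invariance of $\CF$ it concludes $[v_y,\CF]\subset\CF$ for every $y\in\g$, observes this is equivalent to $[v_y,\widehat{\CF}]\subset\widehat{\CF}$, and then for $x\in\h$ simply computes $v_{[y,x]}=[v_y,v_x]\in\widehat{\CF}$, giving $[y,x]\in\h$ in one line. You instead lift the global action to $\widehat{\CF}$, deduce $\mathrm{Ad}$-invariance of $\h$, and then differentiate the curve $t\mapsto \mathrm{Ad}_{\exp(tx)}y$ inside the closed subspace $\h$. Both routes hinge on the same underlying fact (invariance of $\widehat{\CF}$), but the paper exploits that the Lie bracket of fundamental vector fields already realizes $\mathrm{ad}$, so no passage through the group and back is needed. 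Your detour through $\mathrm{Ad}$ is perfectly valid and perhaps conceptually clearer about where connectedness is (not) used; the paper's version is simply shorter.
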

\begin{proof}
Since $\cF$ is $G$-invariant, for all $y\in \g$ we have $[v_y,\cF]\subset \cF$, or equivalently $[v_y,\widehat{\cF}]\subset \widehat{\cF}$.
Let $x\in \h$. Then for all $y\in \g$ we have $v_{[y,x]}=[v_y,v_x]\in \widehat{\cF}$, that is,
$[y,x]\in \h$. 
\end{proof}

Denote by $H$ the unique connected Lie subgroup of $G$ with Lie algebra $\h$. 
Lemma \ref{lem:ideal} implies that $H$ is a normal subgroup, hence as in Example \ref{ex:GH} we obtain a Lie 2-group $H\rtimes G\soutar G$.

We  define {a Lie group action of $H$} of $\Ho(\CF)$. It is not by groupoid automorphisms, unlike the  lifted $G$-action $\vec{\star} $ introduced in \S \ref{sec:groidorbits}, {but rather it preserves every source fiber}. In order to do so, we need a lemma.

\begin{lem}\label{lem:phi}
{There is a canonical groupoid morphism   $$\phi\colon {H}\times P\to \Ho(\CF),$$ where ${H}\times P$ denotes the transformation groupoid of the ${H}$-action on $P$  {obtained restring the action of $G$.}

{The morphism $\phi$ can be described as follows:} take $({h},p)\in {H}\times P$ and denote by $\widehat{{h}}\colon P\fto P$ the diffeomorphism corresponding to ${h}$ under the $G$-action. Then
$\phi({h},p)$ is the unique element of $\Ho(\CF)$ carrying the diffeomorphism $\widehat{{h}}$ near $p$.} 
\end{lem}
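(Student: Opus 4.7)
My strategy is to prove uniqueness first (which also immediately yields the groupoid morphism property), then to establish existence of a bisubmersion representative, and finally to check continuity.

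\emph{Uniqueness and the morphism property.} By Proposition \ref{prop:equiv2}, an element of $\Ho(\cF)$ is uniquely determined by the germ of local diffeomorphism it carries. Hence any element carrying $\widehat{h}$ near $p$ is unique, so $\phi$ is well-defined as a set-map once existence is shown. The source and target of $\phi(h,p)$ then match those of $(h,p)\in H\times P$ by construction, and the composition property is automatic: $\phi(h_2,\widehat{h_1}(p))\circ \phi(h_1,p)$ carries $\widehat{h_2}\circ\widehat{h_1}=\widehat{h_2h_1}$ near $p$, so by uniqueness it must equal $\phi(h_2h_1,p)$.

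\emph{Existence.} I would first treat the case $h=\exp(x)$ with $x\in \h$. Since $v_x\in \widehat{\cF}$, one can pick $f\in C^\infty_c(P)$ with $f\equiv 1$ near $p$ and $fv_x\in \cF$. Extending $fv_x$ to a finite collection $X_1=fv_x,X_2,\ldots,X_k$ whose classes span $\cF_p$ yields a path-holonomy bisubmersion $W\subset \RR^k\times P$ via Prop.~\ref{prop:path.hol.bi}, and for $t$ small the point $(t,0,\ldots,0,p)\in W$ carries $\exp(t\cdot fv_x)=\widehat{\exp(tx)}$ near $p$. For arbitrary $t$, split $\exp(tx)=\exp((t/N)x)^N$ for $N$ large enough and compose bisubmersions, choosing bump functions adapted to each intermediate orbit point. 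For general $h\in H$, use connectedness to write $h=\exp(x_1)\cdots \exp(x_n)$ with $x_i\in \h$, and compose the bisubmersions obtained factor by factor.

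\emph{Continuity and the main obstacle.} Near any $(h_0,p_0)\in H\times P$, I would write $h=h_0\exp(x)$ for $x$ in a small neighborhood of $0$ in $\h$. Then $\phi(\exp(x),p)$ varies continuously in $(x,p)$ because it is represented by a continuously varying point of a single path-holonomy bisubmersion built from $fv_{x_i}$ for a basis $\{x_i\}$ of $\h$, and the factorization $\phi(h,p)=\phi(h_0,\widehat{\exp(x)}(p))\circ \phi(\exp(x),p)$ together with continuity of the $G$-action and of groupoid multiplication yields continuity of $\phi$ globally. The main technical challenge lies in the existence step: $v_x$ belongs to the global hull $\widehat{\cF}$ rather than to $\cF$ itself, forcing the use of bump-function cutoffs to produce honest elements of $\cF$ whose flows agree locally with $\widehat{\exp(tx)}$; and a single path-holonomy bisubmersion carries $\exp(\sum v_iX_i)$ only for parameters near zero, so a splitting-plus-composition argument is needed to realize arbitrary $h\in H$.
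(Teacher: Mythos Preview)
Your argument is correct and self-contained, but it follows a different route from the paper. The paper's proof is two lines: it first identifies the transformation groupoid $H\times P$ with the holonomy groupoid of the regular foliation $\cF_H$ given by the $H$-orbits (this uses that the action is free, so that $H\times P$ is effective, and \cite[Ex.~3.4(4)]{AndrSk}), and then invokes the general fact that an inclusion of singular foliations $\cF_H\subset\cF$ induces a canonical groupoid morphism $\Ho(\cF_H)\to\Ho(\cF)$ (\cite[Lemma~4.4]{SingSub}); the explicit description of $\phi$ then follows because morphisms of bisubmersions preserve carried diffeomorphisms. Your approach is more explicit and avoids external references: you produce representatives of $\phi(h,p)$ directly inside compositions of path-holonomy bisubmersions, using bump functions to pass from $v_x\in\widehat{\cF}$ to honest elements of $\cF$. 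What you gain is a concrete recipe for the map; what you pay is the bookkeeping of the splitting-and-composing step and of the continuity argument (in particular, the continuity of $q\mapsto\phi(h_0,q)$ should be justified by fixing once and for all a decomposition of $h_0$ and a single chain of bisubmersions valid for all $q$ near $p_0$, rather than appealing to the global statement you are proving). One small imprecision: an element of $\Ho(\cF)$ carries many germs of diffeomorphisms, so the correct formulation of uniqueness is that two elements with source $p$ which carry a \emph{common} germ coincide, which is indeed what Prop.~\ref{prop:equiv2} gives.
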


\begin{proof}[Proof of Lemma \ref{lem:phi}]
{Denote by $\cF_{H}$ the regular foliation on $P$ by orbits of the $H$-action.
Its holonomy groupoid is exactly $H\times P$, as follows from \cite[Ex. 3.4(4)]{AndrSk} (use that the Lie groupoid $H\times P$ gives rise to the foliation $\cF_{H}$ and is effective, i.e. the identity diffeomorphism on $M$ is carried only by identity elements of the Lie groupoid,  
due to the freeness of the action).} 

Since $\cF_{H}\subset \cF$, we are done applying \cite[Lemma 4.4]{SingSub} in the special case of the pair groupoid over $P$. 

 The description of $\phi$ given in the statement holds since $\phi$ is a groupoid morphism covering $Id_P$.
\end{proof}

\begin{lem}\label{lem:Lie.2.grpFIBER}
{   We have $\phi(H\times P)\subset \CK:=\ker(\Xi)$. }
\end{lem}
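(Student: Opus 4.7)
The plan is to show that $\Xi(\phi(h,p))$ is the unit $1_{\pi(p)} \in \Ho(\CF_M)$ for every $(h,p) \in H \times P$. The three ingredients I would combine are: the characterization of $\Xi$ from Rem.~\ref{rem:S}, the defining property of $\phi$ from Lemma~\ref{lem:phi}, and the elementary observation that $\pi \circ \widehat{h} = \pi$, which holds because $h \in G$ preserves the $\pi$-fibers.

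First I would represent $\phi(h,p)$ concretely: pick a bisubmersion $(U,\bt,\bs)$ for $\cF$ (belonging to a source connected atlas) and $u \in U$ with $[u] = \phi(h,p)$. By Lemma~\ref{lem:phi} such a $U$ carries $\widehat{h}$ at $u$, so there is a bisection $\sigma\colon V \to U$ defined on some open $V \subset P$ containing $p$, with $\sigma(p)=u$, $\bs\circ\sigma = \mathrm{id}_V$, and $\bt\circ\sigma = \widehat{h}|_V$. By Rem.~\ref{rem:S}, $\Xi([u])$ is represented by $u$ viewed inside the bisubmersion $(U, \pi\circ\bt, \pi\circ\bs)$ for $\cF_M$, so it suffices to exhibit a bisection of this $\cF_M$-bisubmersion at $u$ that carries the identity of $M$ near $\pi(p)$.

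The key construction is to compose $\sigma$ with a local section of $\pi$. Since $\pi$ is a submersion, choose a smooth section $s\colon W \to V$ of $\pi$ defined on an open neighborhood $W \subset M$ of $\pi(p)$ with $s(\pi(p))=p$, and set $\tilde{\sigma} := \sigma \circ s$. Then $\tilde{\sigma}(\pi(p))=u$, and using $\pi \circ \widehat{h} = \pi$ one checks
\[\pi\circ\bs\circ\tilde{\sigma} = \pi \circ s = \mathrm{id}_W, \qquad \pi\circ\bt\circ\tilde{\sigma} = \pi\circ\widehat{h}\circ s = \pi\circ s = \mathrm{id}_W.\]
The first identity shows $\tilde{\sigma}$ is a section of $\pi\circ\bs$; differentiating it shows that the image of $\tilde{\sigma}$ is transverse to the fibers of $\pi\circ\bt$. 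Hence $\tilde{\sigma}$ is a bisection of $(U, \pi\circ\bt, \pi\circ\bs)$ at $u$ carrying $\mathrm{id}_W$, so $\Xi([u]) = 1_{\pi(p)}$ and $\phi(h,p) \in \CK$.

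The main subtlety I would expect concerns the change in the notion of bisection when one passes between the two bisubmersion structures on $U$: for $\cF$, bisections are sections of $\bs$ defined on opens of $P$, whereas for $\cF_M$ they must be sections of $\pi\circ\bs$ on opens of $M$. The whole argument hinges on the simple trick of precomposing $\sigma$ with a local section of $\pi$ to bridge the two, together with the fiber-preserving nature of the $G$-action. Everything else is routine verification.
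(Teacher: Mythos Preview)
Your proof is correct and follows the same strategy as the paper's: represent $\phi(h,p)$ by $u$ in a bisubmersion $U$, then show that $u$ carries the identity diffeomorphism when viewed in $(U,\pi\circ\bt,\pi\circ\bs)$, so that $\Xi([u])=1_{\pi(p)}$ by Rem.~\ref{rem:S}. Your version is in fact slightly more complete than the paper's: you explicitly construct the $\cF_M$-bisection by precomposing $\sigma$ with a local section of $\pi$, and your argument works uniformly for all $h\in H$, whereas the paper first restricts to $h$ close to the identity (implicitly extending to all of $H$ via connectedness and the fact that $\phi$ is a groupoid morphism into the subgroupoid $\CK$).
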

\begin{proof}
We use Lemma \ref{lem:phi}.
A point $\phi(h,p)$ of the l.h.s.  carries near $p$ the diffeomorphism $\widehat{{h}}$ 
(the diffeomorphism corresponding to ${h}$ under the $G$-action). If $h\in H$ is sufficiently close to the unit element, $\phi(h,p)$ admits a representative $u$ in a path holonomy bisubmersion 
$(U,\bt, \bs)$ for $\cF$
satisfying the properties of Rem. \ref{rem:S}. The point $u$, viewed as a point in $(U,\pi \circ\bt, \pi\circ\bs)$, carries $Id_M$, {since $\widehat{{h}}$ preserves each $\pi$-fiber}. This implies that  $\Xi([u])=1_{\pi(q)}$, {by   the characterization of $\Xi$ given in Rem. \ref{rem:S}}.
\end{proof}

{\begin{ex}\label{ex:le.regfol}({\bf Regular foliations}) As discussed in Rem. \ref{rem:hol.regfol}, if $\CF$ is a regular foliation there is a groupoid morphism  $Q\colon \Pi(\CF)\fto \Ho(\CF)$. The orbits of the $H$-action lie inside the leaves of $\CF$, {hence} for every path $h(t)$ {in $H$} and $p\in P$ the homotopy class $[h(t)p]$ {is an element of} $\Pi(\CF)$. Moreover, the freeness of the $G$-action implies that the elements $[h(t)p],[\widetilde{h}(t)p]\in \Pi(\CF)$ have the same holonomy if and only if $h(1)=\widetilde{h}(1)$ and $h(0)=\widetilde{h}(0)$. Therefore there is a well-defined injective groupoid morphism 
$$H\times P\fto \Ho(\CF);\;\; (h,p)\mapsto Q[h(t)p],$$
where $h(t)$ is any path in $H$ with $h(0)=e$ and $h(1)=h$. This morphism is precisely $\phi$. It is clear using Prop. \ref{prop:regXi} that its image lies inside $\CK=\ker(\Xi)$.
\end{ex}}

{Consider now the following {map}, obtained applying the morphism $\phi$ of Lemma \ref{lem:phi} and left-multiplying:}

\begin{equation}\label{eq:left}
   \cev{\star}\colon {H} \times \Ho(\cF)\fto \Ho(\cF), \hspace{1cm}
  h\cev{\star} {\xi}:={\phi(h,\bt({\xi}))\circ {\xi}}.
\end{equation}
Notice that $\phi$ being a groupoid morphism implies that $\cev{\star}$ is group action. {We now assemble the group action $\cev{\star}$ and the lifted action $\vec{\star}$:}

\begin{prop}\label{prop:Lie.2.grpACTION}
The map $$\star\colon (H\rtimes G)\times \Ho(\cF)\fto \Ho(\cF), \;\;(h,g)\star \xi:=h\cev{\star} \left(g\vec{\star} \xi\right)$$
is a Lie 2-group action.
\end{prop}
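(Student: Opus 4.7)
The plan is to reduce the verification to two intertwining identities relating $\vec{\star}$, $\cev{\star}$, and the morphism $\phi$ of Lemma \ref{lem:phi}, and then combine them with the group-action properties of $\vec{\star}$ and $\cev{\star}$ (the latter coming from $\phi$ being a groupoid morphism) to obtain both the group action axiom for $\star$ and the compatibility of the action map with the groupoid structures on $H\rtimes G \soutar G$ and $\Ho(\cF)\soutar P$.

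First I would establish the following two identities:
\begin{enumerate}
\item[(I1)] $g\vec{\star}\,\phi(h,p)=\phi(C_g(h),gp)$ for every $g\in G$, $h\in H$, $p\in P$;
\item[(I2)] $h\cev{\star}\eta=(h\vec{\star}\eta)\circ \phi(h,\bs(\eta))$ for every $h\in H$ and $\eta\in \Ho(\cF)$.
\end{enumerate}
Both follow from Prop. \ref{prop:equiv2}: by Lemma \ref{lem:phi}, $\phi(h,p)$ is the unique element of $\Ho(\cF)$ carrying $\widehat{h}$ near $p$, and for $\eta=[u]$ with $u$ in a bisubmersion carrying a local diffeomorphism $f$ near $\bs(\eta)$, the construction of $gW$ in Lemma \ref{def:gW} makes it clear that $g\vec{\star}\eta$ carries $\widehat{g}\circ f\circ \widehat{g}^{-1}$ near $g\bs(\eta)$. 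Both sides of (I1) then carry $\widehat{g}\circ \widehat{h}\circ \widehat{g}^{-1}=\widehat{C_g(h)}$ near $gp$, and both sides of (I2) carry $\widehat{h}\circ f$ near $\bs(\eta)$, as one checks by composing the diffeomorphisms carried by each factor.

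Combining (I1) with the groupoid morphism property of $g\vec{\star}(-)$ (Lemma \ref{lem:cover}) yields the commutation relation
$$g\vec{\star}(h\cev{\star}\eta)=C_g(h)\cev{\star}(g\vec{\star}\eta).$$
Together with the $G$-action property of $\vec{\star}$, the $H$-action property of $\cev{\star}$, and the semidirect product law $(h_1,g_1)\cdot(h_2,g_2)=(h_1 C_{g_1}(h_2),g_1 g_2)$, this gives the group action axiom for $\star$ by direct calculation. For the 2-group action condition, the source/target and unit compatibilities are immediate from $\bs(g\vec{\star}\eta)=g\bs(\eta)$, $\bt(g\vec{\star}\eta)=g\bt(\eta)$ and $\phi(e_H,p)=1_p$, while the composition compatibility
$$(h_2h_1,g)\star(\eta\circ\xi)=\big((h_2,h_1g)\star\eta\big)\circ\big((h_1,g)\star\xi\big)$$
(for composable arrows) is obtained by applying (I2) inside $(h_1,g)\star\xi$ to shift the $\phi$-factor past $g\vec{\star}\eta$, after which the groupoid morphism property of $\phi$ collapses $\phi(h_2,\cdot)\circ \phi(h_1,\cdot)$ into $\phi(h_2h_1,\cdot)$ and the two sides agree.

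The main obstacle is this composition check: once (I1), (I2), and the derived commutation relation are in place, the remaining manipulations are mechanical but require careful tracking of source and target points and of the conjugation $C_g$ arising from the semidirect product structure.
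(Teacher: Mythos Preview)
Your proposal is correct and follows essentially the same route as the paper. The paper also isolates your (I1) (its ``fact i)''), and in place of your (I2) states the equivalent identity $h\vec{\star}\xi=\phi(h,\bt(\xi))\circ\xi\circ\phi(h^{-1},h\bs(\xi))$ (its ``fact ii)''), which is (I2) after right-multiplying by $\phi(h,\bs(\xi))^{-1}$; the group-action and composition checks then proceed exactly as you outline.
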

\begin{proof}
We first observe that if $\xi\in \Ho(\cF)$ carries a diffeomorphism $\psi$, then $g\,\vec{\star}\,\xi$ carries the diffeomorphism  $\widehat{g}\psi\widehat{g}^{-1}$.

We also observe the following two facts, which hold because both the left and the right side carry  the same diffeomorphism  (as can be seen using the above observation) and because of  Prop. \ref{prop:equiv2}:

i) the map $\phi\colon H\times P\to \Ho(\CF)$ satisfies the following equivariance property: $$g\,\vec{\star} \,\phi(h,p)=\phi(c_gh,gp),$$
 where $c_g$ denotes conjugation by $g$.

ii) For all $h\in H$ and $\xi \in\Ho(\CF)$ we have $$h\,\vec{\star}\,\xi=\phi(h,\bt(\xi))\circ \xi \circ \phi(h^{-1},h\bs(\xi)).$$

To show that $\star$ is a group action, the main requirement   is to show that
$((h_1,g_1)(h_2,g_2))\star\xi$ equals $(h_1,g_1)\star \big((h_2,g_2)\star\xi\big)$ for all $(h_i,g_i)\in H\rtimes G$ and $\xi \in\Ho(\CF)$.
This holds by a straightforward computation, in which the second term is re-written using the fact that $\phi$ is a groupoid morphism and is $G$-equivariant (fact i) above).

To show that $\star$ is a groupoid morphism,  the main requirement is to show that $(h_1h_2,g_2)\star(\xi_1\circ \xi_2)$ and $\big((h_1,g_1)\star\xi_1\big)\circ \big((h_2,g_2)\star\xi_2\big)$ agree, where $g_1=h_2g_2$ and $\bs(\xi_1)=\bt(\xi_2)$. Upon using that  $\phi$ is a groupoid morphism and the action $\vec{\star}$ is by groupoid automorphisms, this boils down to applying\footnote{with $h:=h_2$ and $\xi:=g_2\vec{\star}\xi_1$.} fact ii) above.
\end{proof}

{
\begin{ex}\label{ex:le.regfol.tot}({\bf Regular foliations})
When both   $\CF$ and $\CF_M$ are regular foliations, using Ex. \ref{ex:ri.regfol}
and Ex. \ref{ex:le.regfol}, 
 the Lie 2-group action $\star$
can be described as follows: for any path $\gamma$ in a leaf of $\CF$,
$$(h,g)\star [\gamma]=[h(t)\cdot g\gamma(1)]\circ [g\gamma],$$
where $t\mapsto h(t)$ is any path in the Lie group $H$ starting at the unit element and ending in $h$, and the dot denotes the group action of $H$ on $P$. Thus the right hand side is the (holonomy class of the) concatenation of the following two paths in $P$:
the translate $g\gamma$ of the original curve $\gamma$ by the group element $g$, and the path obtained acting on its endpoint $g\gamma(1)$ by the path $h(t)$ in $H$.  \end{ex}
}

\begin{figure}[H]\label{fig:comp}
	\centering
	\scalebox{.50}{\includegraphics{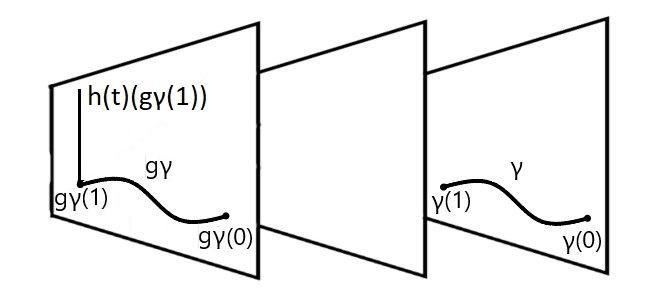}}
	\caption{{On the left, a curve representing the holonomy class $(h,g)\star [\gamma]$}.}
\end{figure}

{Lemma   \ref{lem:orbits} and Lemma \ref{lem:Lie.2.grpFIBER}  imply:}
\begin{cor}\label{prop:Lie.2.grpFIBER}
The orbits of the Lie 2-group action $\star$ of Prop. \ref{prop:Lie.2.grpACTION} lie inside the $\Xi$-fibers.
\end{cor}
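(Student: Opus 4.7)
The plan is to assemble the two invariance statements already at hand. Let $\xi \in \Ho(\cF)$ and $(h,g) \in H\rtimes G$. Unpacking the definition of $\star$ from Prop. \ref{prop:Lie.2.grpACTION}, I write
$$ (h,g)\star \xi \;=\; \phi\bigl(h,\bt(g\vec{\star}\xi)\bigr)\circ (g\vec{\star}\xi), $$
and apply the groupoid morphism $\Xi$ to obtain
$$ \Xi\bigl((h,g)\star\xi\bigr) \;=\; \Xi\bigl(\phi(h,\bt(g\vec{\star}\xi))\bigr)\circ \Xi(g\vec{\star}\xi). $$

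The first factor in this composition is trivial. By Lemma \ref{lem:Lie.2.grpFIBER} the image of $\phi$ lies in $\CK=\ker(\Xi)$, so $\Xi(\phi(h,\bt(g\vec{\star}\xi)))$ is the unit of $\Ho(\cF_M)$ at the point $\pi(\bt(g\vec{\star}\xi))$, which is precisely the target of $\Xi(g\vec{\star}\xi)$. Hence the composition reduces to $\Xi(g\vec{\star}\xi)$, and by Lemma \ref{lem:orbits} this in turn equals $\Xi(\xi)$. Putting everything together gives $\Xi((h,g)\star\xi)=\Xi(\xi)$, which is exactly the statement that the $\star$-orbit of $\xi$ is contained in the fiber $\Xi^{-1}(\Xi(\xi))$.

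There is no genuine obstacle here: the proof is a one-line chain of equalities, whose only content is recognising that the two components of the 2-group action (the lifted $G$-action $\vec{\star}$ and the $H$-translation $\cev{\star}$) have been arranged so that each factor lands in the $\Xi$-fibers, the former by Lemma \ref{lem:orbits} and the latter because left-multiplication by an element of $\ker(\Xi)$ does not change the $\Xi$-image.
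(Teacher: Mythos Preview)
Your proof is correct and follows exactly the paper's approach: the paper simply states that the corollary follows from Lemma~\ref{lem:orbits} and Lemma~\ref{lem:Lie.2.grpFIBER}, and you have spelled out precisely how these two lemmas combine.
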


\subsection{{An alternative description for the Lie 2-group action of \S \ref{subsec:ex.lie.2.grp}}}\label{subsec:alternative}

 We obtained the formula for the Lie 2-group action of $H\rtimes G$ in \S\ref{subsec:2gract} by considering a special case, as we now explain. Assume the set-up of Thm. \ref{thm:act.q.fol}, in particular that  $\Gamma_c(\ker d\pi)\subset \CF$ (i.e., $\widehat{\CF}$ contains the infinitesimal generators of the  $G$-action). There  we  defined an action  of $G\rtimes G$ on $\Ho(\CF)$ by means of the canonical isomorphism 
 $\varphi\colon\Ho(\CF)\xrightarrow{\sim}\pi^{-1}\Ho(\CF_M)$. 
 
The goal of this subsection is to prove the following proposition:

\begin{prop}\label{prop:sameaction}
{Consider these Lie 2-group actions of $G\rtimes G$ on $\Ho(\CF)$:
	\begin{itemize}
\item the action\footnote{Note that, under our assumptions on $\CF$, in the setting of Prop. \ref{prop:Lie.2.grpACTION} we have $H=G$, because the Lie subalgebra $\h$ introduced in Lemma \ref{lem:ideal} equals  the whole of $\g$.}
 $\star$ described in Prop. \ref{prop:Lie.2.grpACTION},
\item  the action described in Thm. \ref{thm:act.q.fol}. 
\end{itemize}
These two actions coincide. In other words, under the canonical isomorphism $\varphi\colon \Ho(\CF)\xrightarrow{\sim} \pi^{-1}\Ho(\CF_{M})$ given in Thm. \ref{thm:Homeo} we have}
 $$\varphi((h,g)\star \xi)=(h,g)* \varphi(\xi)$$ 
for all $(g,h)\in G\rtimes G\y \xi\in\Ho(\CF)$, where  $*$ is the action given in eq. \eqref{eq:star}.	
\end{prop}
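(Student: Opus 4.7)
The plan is to reduce the claim to two key computations, one for each building block of $\star$ (i.e.\ $\cev{\star}$ and $\vec{\star}$), and then combine them using that $\varphi$ is a groupoid isomorphism. Fix $\xi\in\Ho(\CF)$, set $p=\bt(\xi)$ and $q=\bs(\xi)$, and write $\varphi(\xi)=(p,[v],q)$.

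For the lifted action, I would show $\varphi(g\vec{\star}\xi)=(gp,[v],gq)$. Pick a path-holonomy atlas $\CU_M$ for $\CF_M$ and a representative $(p,u,q)\in\pi^{-1}U$ of $\xi$ (see Rem.\ \ref{rem:varphi}), so $[u]=[v]$. By construction, $g\vec{\star}\xi$ is the class of $(p,u,q)$ viewed in $g(\pi^{-1}U)$. The map $(p',u',q')\mapsto(gp',u',gq')$ is a well-defined isomorphism of bisubmersions $g(\pi^{-1}U)\to\pi^{-1}U$ (using $\pi\circ\widehat{g}=\pi$ for well-definedness and a direct check for the intertwining of source and target), so $g\vec{\star}\xi=[(gp,u,gq)]$, giving $\varphi(g\vec{\star}\xi)=(gp,[v],gq)$.

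For the left action, I would establish $\varphi(\phi(h,p))=(hp,1_{\pi(p)},p)$ for all $h\in G$ and $p\in P$ (recall that in the present setting $H=G$). The first and third components follow from $\bt(\phi(h,p))=hp$ and $\bs(\phi(h,p))=p$; for the middle component, I would combine Lemma \ref{lem:Lie.2.grpFIBER} (giving $\Xi(\phi(h,p))=1_{\pi(p)}$) with Prop.\ \ref{prop:normalq} (identifying $\Xi$ with $pr_2\circ\varphi$).

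Combining both, since $\varphi$ is a groupoid morphism and composition in $P\times_M\Ho(\CF_M)\times_M P$ reads $(a,\alpha,b)\circ(b,\beta,c)=(a,\alpha\circ\beta,c)$, I would conclude
$$\varphi((h,g)\star\xi)=\varphi(\phi(h,gp))\circ\varphi(g\vec{\star}\xi)=(hgp,1_{\pi(gp)},gp)\circ(gp,[v],gq)=(hgp,[v],gq),$$
which matches $(h,g)*(p,[v],q)$ from eq.\ \eqref{eq:star}. The main obstacle I expect is the careful handling of the bisubmersion isomorphism $g(\pi^{-1}U)\to\pi^{-1}U$ in the first step; once that is in place, the rest is a direct bookkeeping exercise using the groupoid-morphism property of $\varphi$.
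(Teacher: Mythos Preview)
Your overall strategy matches the paper's: reduce to the two factor subgroups $\{e\}\times G$ and $G\times\{e\}$, prove the compatibility for each, then combine. The paper packages these as Lemmas \ref{lem:raction} and \ref{lem:laction} and concludes exactly as you do.

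For the $\vec{\star}$ part your argument is essentially the paper's, with one caveat: the formula $g\vec{\star}[v]=[v]_{gW}$ in eq.~\eqref{eq:vecstar} is stated for $W$ a path holonomy bisubmersion for $\CF$, not for $W=\pi^{-1}U$. The paper therefore first takes a representative $w$ in a path holonomy bisubmersion $W$, uses a morphism $\tau\colon W\to\pi^{-1}U$ with $\tau(w)=(p,u,q)$, and observes that the same $\tau$ is a morphism $gW\to g(\pi^{-1}U)$; only then does it invoke your isomorphism $g(\pi^{-1}U)\to\pi^{-1}U$. Your shortcut is fine once you insert this one-line justification that the defining formula for $\vec{\star}$ extends to the pullback atlas.

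For the $\cev{\star}$ part your route genuinely differs from the paper's. The paper (Lemma \ref{lem:laction}) argues directly with carried diffeomorphisms: it shows that both $\phi(h,p)\circ[(p,u,q)]$ and $[(hp,u,q)]$ carry $\widehat{h}\circ f$, hence coincide. You instead compute $\varphi(\phi(h,p))=(hp,1_{\pi(p)},p)$ by reading off the outer components from source/target and the middle component from Lemma \ref{lem:Lie.2.grpFIBER} combined with Prop.~\ref{prop:normalq}, then use that $\varphi$ is a groupoid morphism. This is a clean alternative that reuses results already in the paper rather than repeating a bisubmersion argument; it is in fact the computation the paper records afterwards in Rem.~\ref{rem:imker} (there derived \emph{from} Lemma \ref{lem:laction}, whereas you obtain it independently).
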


 We will prove this statement by analyzing the restrictions of the actions to 
$\{e\}\times G$ and $G\times\{e\}$. (Notice that these two subgroups generate $G\rtimes G$ as a group, since every element $(h,g)$ can be written as $(h,e)(e,g)$).
{We denote by ${\varphi}\colon \Ho(\CF)\xrightarrow{\sim} \pi^{-1}\Ho(\CF_{M})$  the canonical isomorphism  given in Thm. \ref{thm:Homeo}.}
\begin{lem}\label{lem:raction}
Under the  isomorphism ${\varphi}$,  
the lifted action $\vec{\star} $ of $G$ introduced in \S\ref{sec:groidorbits} 
and the restriction of the Lie $2$-group action $*$ of {eq. \eqref{eq:star}}
agree:  $${\varphi}(g\vec{\star} \xi)=(e,g)* {\varphi}(\xi)$$
for all $g\in G$ and $\xi\in \Ho(\CF)$.
\end{lem}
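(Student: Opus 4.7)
The plan is to trace through the definitions by choosing a convenient source connected atlas, namely $\pi^{-1}\CU_M$ where $\CU_M$ is a path holonomy atlas for $\CF_M$ (see Rem. \ref{rem:varphi}). Take $\xi\in \Ho(\CF)$ and represent it as $\xi=[(p,u,q)]$ for some $(p,u,q)\in \pi^{-1}U$ with $\pi^{-1}U\in \pi^{-1}\CU_M$. By the description of $\varphi$ given in Rem. \ref{rem:varphi}, the right-hand side of the claimed identity becomes
\[
(e,g)*\varphi(\xi)=(e,g)*(p,[u],q)=(gp,[u],gq)=\varphi\bigl([(gp,u,gq)]\bigr),
\]
where $(gp,u,gq)$ lies in $\pi^{-1}U$ because $\pi\circ\widehat{g}=\pi$. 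It therefore suffices to show that $g\vec{\star}\xi=[(gp,u,gq)]$.

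First I would observe that the definition \eqref{eq:vecstar} of the lifted action extends verbatim to any bisubmersion for $\CF$: for any $(W,\bt,\bs)$, the triple $gW=(W,\widehat{g}\circ\bt,\widehat{g}\circ\bs)$ is a bisubmersion, since $G$-invariance of $\CF$ gives $(\widehat{g}\circ\bs)^{-1}(\CF)=\bs^{-1}(\CF)$ and similarly for the target, exactly as in Lemma \ref{def:gW}. Then I would introduce the map
\[
\Phi_g\colon g(\pi^{-1}U)\fto \pi^{-1}U,\qquad (p',u',q')\mapsto (gp',u',gq'),
\]
and verify that it is a morphism of bisubmersions: compatibility with target maps follows from
\[
\bt_{\pi^{-1}U}\bigl(\Phi_g(p',u',q')\bigr)=gp'=\widehat{g}\circ \bt_{\pi^{-1}U}(p',u',q')=\bt_{g(\pi^{-1}U)}(p',u',q'),
\]
and the analogous identity for the source. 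By Def. \ref{def:eq.rel}, this makes $(p,u,q)\in g(\pi^{-1}U)$ equivalent to $(gp,u,gq)\in \pi^{-1}U$, whence
\[
g\vec{\star}[(p,u,q)]=[(p,u,q)]_{g(\pi^{-1}U)}=[(gp,u,gq)]_{\pi^{-1}U}
\]
in $\Ho(\CF)$. Applying $\varphi$ and combining with the first display finishes the proof.

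The only care point, and indeed the only place a subtlety could arise, is the legitimacy of computing $g\vec{\star}$ on a representative $(p,u,q)$ drawn from $\pi^{-1}U$ rather than from a path holonomy bisubmersion. This is resolved by the extension argument above: the formula $g\vec{\star}[v]_W:=[v]_{gW}$ makes sense on any bisubmersion for $\CF$, and the adaptation of $g(\pi^{-1}U)$ back to $\pi^{-1}\CU_M$ is supplied precisely by $\Phi_g$. Apart from this bookkeeping, the argument is a direct matching of source/target formulae, with no genuine obstacle.
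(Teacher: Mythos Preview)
Your proof is correct and follows essentially the same approach as the paper. Both arguments hinge on the observation that the map $(p',u',q')\mapsto(gp',u',gq')$ is an isomorphism of bisubmersions $g(\pi^{-1}U)\to(\pi^{-1}U,\bt,\bs)$, and deduce $g\vec{\star}[(p,u,q)]=[(gp,u,gq)]$ from it. The only cosmetic difference is that the paper first picks a representative $w$ in a genuine path holonomy bisubmersion $W$ and transfers it to $\pi^{-1}U$ via a local morphism $\tau$, whereas you work directly in the pullback atlas $\pi^{-1}\CU_M$ and justify this by extending the formula $g\vec{\star}[v]_W:=[v]_{gW}$ to arbitrary bisubmersions; both devices address the same bookkeeping point and the geometric content is identical.
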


\begin{proof}
Let $\CU$ be a path holonomy atlas for $\CF_M$. Recall that by Rem. \ref{rem:varphi}, the pullback-atlas $\pi^{-1}\CU$ is an atlas for $\pi^{-1}(\CF_M)$  equivalent to a path-holonomy atlas.  
Fix $\xi\in \Ho(\CF)$. Take a representative $w$ in a {bisubmersion} $W$ in the
path holonomy atlas of $\pi^{-1}(\CF_M)$.  
By the above, there is a path holonomy bisubmersion $U$ in $\CU$ and a locally defined morphism of bisubmersions $$\tau\colon (W,\bt,\bs)\fto   (\pi^{-1} U,\bt,\bs)$$ 
	mapping $w$ to some point $(p,v,q)$.
 By definition,  $\varphi([w])=(p,[v],q)$.
 
Now fix $g\in G$. Recall that the bisubmersion $gW:=(W, \widehat{g}\circ\bs, \widehat{g}\circ\bt)$ was defined in Lemma \ref{def:gW}.	
The same map $\tau$ is also a morphism of bisubmersions $$gW \fto   (\pi^{-1} U,\widehat{g}\circ\bt,\widehat{g}\circ\bs).$$ 
The latter bisubmersion is isomorphic to  $(\pi^{-1} U, \bt,\bs)$ via
$(p',v',q')\mapsto (gp',v',gq')$. By composition we obtain a morphism of bisubmersions
$gW \to (\pi^{-1} U, \bt,\bs)$ mapping $w$ to $(gp,v,gq)$. Hence $g\vec{\star} [w]:=[w]_{gW}$ agrees with $[(gp,v,gq)]\in \Ho(\CF)$, and therefore under $\varphi$ it is mapped to 
$(gp,[v],gq)=(e,g)* \varphi([w])$.
\end{proof}

\begin{lem}\label{lem:laction}
{Under the  isomorphism ${\varphi}$,  
the  action $\cev{\star} $ of $H$ introduced in eq. \eqref{eq:left} 
and the restriction of the Lie $2$-group action $*$ of {eq. \eqref{eq:star}}
agree:} $$\varphi(h\cev{\star} \xi)=(h,e)* \varphi(\xi)$$  for all $h\in G$ and $\xi\in \Ho(\CF)$.
\end{lem}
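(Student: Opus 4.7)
The plan is to split the composition $h\cev{\star}\xi=\phi(h,\bt(\xi))\circ\xi$ across $\varphi$ and handle the two pieces separately. Writing $p:=\bt(\xi)$, $q:=\bs(\xi)$, and $\varphi(\xi)=(p,\zeta,q)$ with $\zeta\in\Ho(\CF_M)$, and using that $\varphi$ is a groupoid isomorphism by Thm. \ref{thm:Homeo}, I would obtain
$$\varphi(h\cev{\star}\xi) \;=\; \varphi(\phi(h,p))\circ\varphi(\xi).$$
On the other hand, the right-hand side of the claimed equality is $(h,e)* \varphi(\xi) = (hp,\zeta,q)$ by eq. \eqref{eq:star}. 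So the lemma reduces to the identity $\varphi(\phi(h,p))=(hp,1_{\pi(p)},p)$, after which composition in the pullback groupoid $\pi^{-1}\Ho(\CF_M)$ closes the loop, since
$$(hp,1_{\pi(p)},p)\circ(p,\zeta,q)=(hp,\zeta,q).$$

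The key step is therefore identifying $\varphi(\phi(h,p))$. Rather than chasing a representative of $\phi(h,p)$ through a bisubmersion of the form $\pi^{-1}U$ and applying the explicit formula of Rem. \ref{rem:varphi}, I would use a cleaner shortcut. Since $\varphi$ covers the identity on $P$, its source and target in $\pi^{-1}\Ho(\CF_M)$ are the first and third projections, so $\varphi(\phi(h,p))$ is forced to have the form $(hp,\zeta_0,p)$ for some $\zeta_0\in\Ho(\CF_M)$. To pin down $\zeta_0$, I would invoke Prop. \ref{prop:normalq}, which identifies $\Xi$ with $pr_2\circ\varphi$, together with Lemma \ref{lem:Lie.2.grpFIBER}, which places $\phi(h,p)$ inside $\ker(\Xi)$. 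Combining them yields $\zeta_0=\Xi(\phi(h,p))=1_{\pi(p)}$, as desired.

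The argument is then mostly bookkeeping; the one mildly delicate ingredient is the identification of $\varphi(\phi(h,p))$, and that has just been reduced to the combination of Prop. \ref{prop:normalq} and Lemma \ref{lem:Lie.2.grpFIBER}. I would also remark that, in the setting of Prop. \ref{prop:sameaction}, the hypothesis $\Gamma_c(\ker d\pi)\subset\CF$ forces $\h=\g$ in Lemma \ref{lem:ideal}, hence $H=G$, so the action $\cev{\star}$ of $H$ is genuinely an action of $G$ and the statement of the lemma is meaningful as a comparison with the restriction of $*$ to $G\times\{e\}$.
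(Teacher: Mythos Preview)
Your argument is correct, and it is a genuinely different route from the paper's proof. The paper works at the level of representatives in the pullback atlas $\pi^{-1}\CU$: it fixes a representative $(p,u,q)$ of $\xi$, observes that any representative of $\phi(h,p)\circ[(p,u,q)]$ carries the local diffeomorphism $\widehat{h}\circ f$ (where $f$ is carried at $(p,u,q)$), notes that $(hp,u,q)\in\pi^{-1}U$ carries the same diffeomorphism, and invokes Prop.~\ref{prop:equiv2} to conclude $h\cev{\star}[(p,u,q)]=[(hp,u,q)]$; the description of $\varphi$ in Rem.~\ref{rem:varphi} then finishes the job.

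Your approach instead stays at the groupoid level and recycles already-proven facts: you split $\varphi(h\cev{\star}\xi)=\varphi(\phi(h,p))\circ\varphi(\xi)$ using that $\varphi$ is a groupoid isomorphism, and then pin down $\varphi(\phi(h,p))=(hp,1_{\pi(p)},p)$ by combining Prop.~\ref{prop:normalq} (which gives $\Xi=pr_2\circ\varphi$) with Lemma~\ref{lem:Lie.2.grpFIBER} (which gives $\phi(H\times P)\subset\ker\Xi$). There is no circularity, since Lemma~\ref{lem:Lie.2.grpFIBER} is proved independently of the present lemma. Your argument is shorter and more structural, avoiding any direct manipulation of carried diffeomorphisms; the paper's argument is more self-contained, not relying on Lemma~\ref{lem:Lie.2.grpFIBER}, and in fact the identity $\varphi(\phi(h,p))=(hp,1_{\pi(p)},p)$ that you isolate is essentially what the paper records afterwards in Rem.~\ref{rem:imker} as a consequence of the present lemma.
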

  
\begin{proof} Take {an arbitrary element $\xi\in \Ho(\CF)$ and} a path holonomy atlas $\CU$ for $\CF_M$. Let $(p,u,q)\in \pi^{-1}U\in \pi^{-1}\CU$ { be a representative of $\xi$}, and let $f$ be a local diffeomorphism carried at $(p,u,q)$.

Fix $h\in G$, and  denote by {$\widehat{h}\colon P\fto P$} the diffeomorphism {corresponding to $h$ under the $G$-action}. Note that {the transformation groupoid $G\times P$ carries}  the diffeomorphism $\widehat{h}$ at $(h,p)$. Hence any representative  of ${\phi(h,p)}\in \Ho(\CF)$ in $\pi^{-1}\CU$ carries this diffeomorphism, {where $\phi$ is the groupoid morphism of Lemma \ref{lem:phi}}.
In turn, this implies that any representative   of $\phi(h,p)\circ[(p,u,q)]\in \Ho(\CF)$ will carry $\widehat{h}\circ f$. Note that $(hp,u,q)\in \pi^{-1}U$ also carries $\widehat{h}\circ f$. {By the definition of holonomy groupoid (using Prop. \ref{prop:equiv2}) it follows that} $h\cev{\star} [(p,u,q)]=[(hp,u,q)]$.

We conclude that
$$\varphi(h\cev{\star} [(p,u,q)])=\varphi([(hp,u,q)])=(hp,[u],q)=(h,e)* \varphi([(p,u,q)]),$$
{using in the second equality the description of the isomorphism $\varphi$ given in Rem. \ref{rem:varphi}.}
\end{proof}

\begin{proof}[Proof of Prop. \ref{prop:sameaction}] The proposition follows from
	$$\varphi((h,g)\star \xi) = \varphi(h\cev{\star}\left(g\vec{\star} \xi\right))=(h,e)*((e,g)*\varphi( \xi)) = (h,g)*\varphi( \xi),$$
	{where we used Lemmas \ref{lem:raction} and \ref{lem:laction} in the second equality.}
\end{proof}
 
\begin{rem}\label{rem:imker}
{The image of $\phi\colon G\times P\fto \Ho(\CF)$ is the kernel of $\Xi\colon \Ho(\CF)\fto \Ho(\CF_M)$.} 
{Indeed, for every $(h,q)\in G\times P$, we have $\phi(h,q)=h\cev{\star} 1_q\in \Ho(\CF)$. Under the identification  $\varphi\colon \Ho(\CF)\xrightarrow{\sim} \pi^{-1}\Ho(\CF_{M})$,   this element corresponds to $(h,e)* \varphi(1_q)=(hq,1_{\pi(q)},q)\in \pi^{-1}\Ho(\CF_{M})$ by Lemma \ref{lem:laction}. Under the same identification, $\ker(\Xi)$ corresponds to $\ker(pr_2)=P\times_{\pi} M \times_{\pi} P$, by Prop. \ref{prop:normalq}.}
\end{rem}

\section{{Groupoid fibrations}}\label{section:groidfib}
 
In this section we investigate when the map $\Xi$ of Thm. \ref{thm:sur.hol} is a fibration. Loosely speaking,  fibrations are the notion of ``nice'' quotient map in the category of (topological or Lie) groupoids.

\subsection{Background on fibrations}

{We first present the recent notion of fibration for open topological groupoids.}

\begin{defi}\label{def:fibrtop}{\cite[Def. 2.1]{BussMeyer}}
Let $\CH\soutar P$ and $\Ho'\soutar M$ be  open topological groupoids. A   morphism of topological groupoids $\Xi\colon \CH\fto \Ho'$ covering a continuous map $\pi\colon P\fto M$ is called  \textbf{fibration} if 
\begin{equation}\label{eq:ximap}
{\Xi_\bs}\colon \CH\fto \CH' {}_\bs\!\times_\pi P ;\;\;\xi\mapsto (\Xi(\xi), \bs(\xi))
\end{equation}
is a surjective open map.
  \end{defi}

\begin{rem}
{In Def. \ref{def:fibrtop} the base map $\pi\colon P\fto M$ {is required to be neither open nor surjective}; see \cite[Remark 2.7]{BussMeyer} for a motivation of this choice. In 
all the instances of fibration appearing in this note, the base map is a surjective submersion between manifolds, thus also open.}
\end{rem}

{We now review fibrations of Lie groupoids and equivalent characterizations, after   \cite{MK2}.}

\begin{defi}\label{def:fibrLie} {\cite[Def. 2.4.3]{MK2}} Let $\CH\soutar P$ and $\Ho'\soutar M$ be Lie groupoids. A   morphism of Lie groupoids $\Xi\colon \CH\fto \Ho'$ covering a smooth map $\pi\colon P\fto M$ is called  \textbf{fibration} if    $\pi$ and  ${\Xi_\bs\colon }\CH\fto \CH' {}_\bs\!\times_\pi P$ (as in {Eq.} \eqref{eq:ximap}) are both surjective submersions.
\end{defi}
{The surjectivity conditions in the previous definition assure} that for any two composable elements in $\CH'$ there exist composable preimages in $\CH$, and thus
the composition on $\CH'$ is entirely determined by the composition in $\CH$. 

In \cite{MK2}  two notions are used to describe  {fibrations of} Lie groupoids: smooth congruences and normal subgroupoid systems.

 Before introducing them, recall that a \textbf{smooth equivalence relation} on a manifold $P$ is an embedded wide Lie subgroupoid $R$ of the pair groupoid $P\times P$.
By the Godement criterium, $P/R$ is a smooth manifold such that the projection map from $P$ is a submersion if and only if $R$ is a smooth equivalence relation. 

\begin{defi}\label{def:smt.cong}{\cite[2.4.5]{MK2}} Let $\CH\soutar P$ be a Lie groupoid. A \textbf{smooth congruence} on $\CH$ consists of two smooth equivalence relations $\CR$ on $\CH$ and $R$ on $P$, such that:
	\begin{itemize}
		\item $\CR\soutar R$ is a Lie subgroupoid of the Cartesian product $\CH\times \CH \soutar P\times P$,
		
		\item the map $\CR\fto \CH\; {}_\bs \!\times_{\mathrm{Pr}_1} R;\;\;(\xi_2,\xi_1)\mapsto (\xi_2, \bs(\xi_2),\bs(\xi_1))$ is a surjective submersion.\footnote{In \cite{MK2} Mackenzie  describes this condition as a certain square diagram being ``versal''.}
	\end{itemize}	
\end{defi}

{Normal subgroupoid systems allow to describe a Lie groupoid fibration  in terms of data on the domain, analogously to how a surjective group morphism can be described by its kernel.} Note that, if $\CK$ is a closed embedded wide Lie subgroupoid of $\CH$, then, by the Godement criterion, the set {of left cosets}
$$ \CK\backslash \CH=\{ \CK\circ \xi \st \xi\in \CH\}$$
has an unique manifold structure making the quotient map $\CH \fto \CK\backslash \CH; \;\xi\mapsto \CK \xi{:=(\CK\circ \xi)}$ a surjective submersion. Note also that the source map $\bs\colon \CH\fto P$ quotients to a well-defined surjective submersion $\CK\backslash \CH\fto P$, which we also denote by $\bs$.

\begin{defi}\label{def:nss}{\cite[2.4.7]{MK2}}
	A \textbf{normal subgroupoid system} in $\CH\soutar P$ is a triple $(\CK,R,\theta)$ where $\CK$ is a closed, embedded, wide Lie subgroupoid of $\CH$; $R$ is a smooth equivalence {relation} on $P$; and $\theta$ is an action of $R$ on the map $\bs\colon \CK\backslash \CH\fto P$ such that for all $(p,q)\in R$ the following   holds:
	\begin{enumerate}
		\item Let $\xi\in \CH$ with $\bs(\xi)=q$ and $\xi_1\in \CH$ with $\theta(p,q)(\CK \xi)=\CK \xi_1$, then $(\bt(\xi_1),\bt(\xi))\in R$.
		\item $\theta(p,q)(\CK {e_q})=\CK {e_p}$.
		\item Let $\xi_1$ and $\xi_2$ composable elements in $\CH$ such that $\bs(\xi_2)=q$. Consider  $\xi_1'$ and $\xi_2'$ such that $\theta(p,q)(\CK \xi_2)=\CK \xi_2'$ and $\theta(\bt(\xi_2'),\bt(\xi_2))(\CK \xi_1)=\CK \xi_1'$, then:
		$$\theta(p,q)(\CK (\xi_1\circ \xi_2))=\CK (\xi_1'\circ\xi_2').$$
	\end{enumerate}
\end{defi}

The following theorem  says that smooth congruences, fibrations and normal subgroupoids systems are equivalent descriptions for the quotients of Lie groupoids.

\begin{thm}\label{thm:norm.sub.sys}(\cite[2.4.6]{MK2} and \cite[2.4.8]{MK2})
{ Let $\CH\soutar P$ be a Lie groupoid.}
\begin{enumerate}
	\item If $\Xi$ is a fibration {defined on $\CH$} covering $\pi$, then the pair $(\CR,R)$ is a congruence, where $\CR:=\CH\times_\Xi \CH$ and $R:= P\times_\pi P$. Conversely, given a congruence, the {induced} quotient map is a fibration.
	\item If $(\CK,R,\theta)$ is a normal subgroupoid system {on $\CH$} then $R$, together with the equivalence relation on $\CH$ given by
	$$\CR:=\{(\xi',\xi)\in \CH\times\CH \st (\bs(\xi'),\bs(\xi))\in R \y \theta(\bs(\xi'),\bs(\xi))\CK \xi=\CK \xi'\},$$
{is} a smooth congruence on $\CH$.
	
	 Conversely, let $(\CR,R)$ be a smooth congruence on $\CH$. Let $\CK$  
{consist of elements of $\CH$ which are related to an identity element
 $e_p$, where $p\in P$.} 	 

Let $\theta$ be the following   action of $R$ on $\CK\backslash \CH$: for every $(p,q)\in R$ and $\xi\in \CH$ with source $q$,
	$$\theta(p,q)(\CK \xi)=\CK \xi',$$
	where $\xi'$ is any element related to $\xi$ and with source $p$. 	
Then  $(\CK,R,\theta)$ is a normal subgroupoid system {on $\CH$}.
\end{enumerate}
\end{thm}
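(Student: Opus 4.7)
The plan is to establish the two equivalences (fibration versus smooth congruence, and smooth congruence versus normal subgroupoid system) separately, constructing the data in both directions and checking the axioms. Throughout, Godement's criterion is the principal technical tool for obtaining smoothness of quotients.

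For part (1), starting from a fibration $\Xi\colon \CH\fto \CH'$ covering $\pi$, I would observe first that $\Xi$ itself is a surjective submersion, being the composition of $\Xi_\bs$ with the projection $\CH'{}_\bs\!\times_\pi P\fto \CH'$, both of which are surjective submersions. Hence $R:=P\times_\pi P$ and $\CR:=\CH\times_\Xi\CH$ are smooth embedded submanifolds, clearly equivalence relations. That $\CR \soutar R$ is a Lie subgroupoid of $\CH\times\CH\soutar P\times P$ is immediate from $\Xi$ being a groupoid morphism. The versal condition unravels to exactly the surjectivity and smoothness of $\Xi_\bs$ from Def. \ref{def:fibrLie}. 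Conversely, from a smooth congruence $(\CR,R)$, Godement produces smooth quotients $\CH/\CR\soutar P/R$; the groupoid structure descends because $\CR\soutar R$ is a subgroupoid of the Cartesian product; and the resulting projection is a fibration by construction.

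For part (2), to pass from a normal subgroupoid system $(\CK,R,\theta)$ to a smooth congruence, the relation $\CR$ defined in the statement is reflexive and transitive because $\theta$ is an action, and symmetric because $\theta(p,q)^{-1}=\theta(q,p)$. Closure of $\CR$ under the groupoid product of $\CH\times \CH$ is axiom (3) of Def. \ref{def:nss} combined with axiom (1), which ensures that the targets of composable representatives are still $R$-related. The smooth structure on $\CR$ can be obtained as the pullback of the graph of $\theta$ along a smooth map, using that $\bs\colon \CK\backslash\CH\fto P$ is a submersion; the same submersion property yields the versal condition. In the reverse direction, $\CK$ consisting of elements $\CR$-related to identities is a closed embedded wide Lie subgroupoid (being the pullback of the identity bisection under a suitable projection), and $\theta(p,q)(\CK\xi):=\CK\xi'$ is well-defined because the versal condition provides some $\xi'$ that is $\CR$-related to $\xi$ with $\bs(\xi')=p$, unique modulo the left $\CK$-action.

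The main obstacle will be verifying axiom (3) of Def. \ref{def:nss} in this reverse passage: given composable $\xi_1,\xi_2\in\CH$ with $\bs(\xi_2)=q$, one must produce $\xi'_1,\xi'_2$ that are composable in $\CH$ and check that $\xi'_1\circ\xi'_2$ represents $\theta(p,q)(\CK(\xi_1\circ\xi_2))$. Composability of the primed arrows follows by applying $\CR\soutar R$ being a subgroupoid to re-derive the base-point condition, and the coset equality then follows from $\CR$ being closed under multiplication in $\CH\times\CH$. Axiom (2) is a direct consequence of $\CR$ containing identity pairs for every $(p,q)\in R$, via compatibility of $\CR\soutar R$ with units, while axiom (1) is built into the source of $\xi'$ being $p$ together with the subgroupoid property. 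The conceptual content of the axioms is transparent, but careful bookkeeping of base points together with left $\CK$-cosets is essential to avoid ambiguity at each stage.
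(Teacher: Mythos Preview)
The paper does not actually provide a proof of this theorem: it is stated with the attribution ``(\cite[2.4.6]{MK2} and \cite[2.4.8]{MK2})'' and no proof is given, as the result is quoted directly from Mackenzie's book. So there is no ``paper's own proof'' to compare your proposal against.

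That said, your sketch follows the standard route one would expect (and essentially the one in Mackenzie): Godement's criterion to obtain smoothness of the quotients, the versal condition as the translation of the fibration axiom, and the back-and-forth between congruences and normal subgroupoid systems via the explicit formulas in the statement. Your outline is reasonable, though if you were to write it out in full you would need to be careful in a couple of places you flag only lightly: showing that $\CR$ is a closed embedded submanifold (not just an immersed one) in the passage from a normal subgroupoid system to a congruence, and verifying that the structure maps on $\CH/\CR$ are smooth. But for the purposes of this paper these details are delegated to the reference.
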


\subsection{{The morphism $\Xi$ is not always a fibration}}
 {Let $\pi\colon P\fto M$ be a surjective submersion with connected fibers,   let $\CF$ be a singular foliation on $P$ satisfying the condition \eqref{eq:bracketpres}, and
denote by $\CF_M$ the induced singular foliation on $M$.}
{In Theorem \ref{thm:sur.hol} we obtained an open surjective morphism of topological groupoids
$\Xi \colon \Ho(\CF)\fto \Ho(\CF_M)$.} 

{This morphism is not always a fibration, as the following example shows. What fails here is the surjectivity of the map {$\Xi_\bs$} ({as in Eq.} \eqref{eq:ximap}).}

\begin{ex}\label{ex:fail} Let $P=\RR^2-(\{0\}\times \RR_+)$ {(the plane with a vertical half-line removed)}, $M=\RR$ and $\pi\colon P\fto M$ the first projection. Take $\CF$ to be the foliation on $P$ given by the horizontal lines {and halflines}, then $\CF_M$ is the full foliation on $M$.
{In particular, the foliation $\CF$ has no holonomy, and $\Ho(\CF_M)=M\times M$.} 
	
The map ${\Xi_\bs\colon} \Ho(\CF)\fto \Ho(\CF_M) {}_\bs\!\times_\pi P$  (as in \eqref{eq:ximap})  is not surjective. Indeed, take ${\zeta}\in \CH(\CF_M)$ such that $\bs({\zeta})=1$ and $\bt({\zeta})=-1$. {For all $y>0$,} the element $({\zeta},{(1,y)})$ lies in $\Ho(\CF_M)_\bs\!\times_\pi P$, but there is no ${\xi}$ in $\CH(\CF)$ satisfying $\bs({\xi})={(1,y)}$ and $\Xi({\xi})={\zeta}$     (otherwise   $\bt(\xi)$ would be of the form $(-1,*)$ and thus not lie in the same leaf as $\bs({\xi})$, leading to a contradiction). 
 \end{ex}

\begin{rem}
{ The above example also shows that, in general, Lie algebroid fibrations 
do not integrate to Lie groupoid fibrations.
To explain this, recall that a Lie algebroid fibration  \cite{MK2} is a morphism of  Lie algebroids $\phi\colon A\to B$ which is fiberwise surjective and covers a surjective submersion between the manifolds of objects. Suppose $A,B$ integrate to source simply connected Lie groupoids  $\cG, \cH$. Then the unique Lie groupoid morphism $\Phi\colon \cG \to \cH$ integrating $\phi$ is a surjective submersion, but
in general   fails to be a  Lie groupoid fibration. An instance is   the above example, in which $A$ and $B$ are the involutive distributions tangent to the foliations, $\phi=\pi_*$, and consequently $\Phi=\Xi$.
}

It was pointed out to us that a sufficient condition for a Lie algebroid fibration 
to integrate to Lie groupoid fibration is the existence of a complete Ehresmann connection, as realized by Brahic in \cite{BrahicExt}. See Example 4.12 there  for an instance involving foliations.\end{rem}

\subsection{Fibrations from pullback-foliations}
{In the next two subsections we present two cases in which $\Xi$ is a fibration. A simple instance is when $\CF$ is a pullback-foliation:}

\begin{prop}\label{prop:pullbackfibr}
	As in Prop. \ref{prop:submfol}, let $\pi\colon P\fto M$ be a surjective submersion with connected fibers, and let $\CF$ be a singular foliation on $P$ satisfying $\Gamma_c(\ker d\pi)\subset \CF$. 
	
	Then the map $\Xi\colon \CH(\CF)\fto \CH(\CF_M)$ of Thm. \ref{thm:sur.hol} is a fibration of topological groupoids.
\end{prop}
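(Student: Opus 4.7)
The plan is to exploit the explicit description of $\Xi$ furnished by Prop. \ref{prop:normalq} to reduce the verification that $\Xi$ is a fibration to elementary point-set topology of fiber products. Since $\Gamma_c(\ker d\pi)\subset \CF$, the singular foliation $\CF$ is the pullback $\pi^{-1}(\CF_M)$, so Thm. \ref{thm:Homeo} provides a canonical isomorphism $\varphi\colon \Ho(\CF)\xrightarrow{\sim}\pi^{-1}(\Ho(\CF_M))=P{}_\pi\!\times_\bt\Ho(\CF_M){}_\bs\!\times_\pi P$, and under $\varphi$ the morphism $\Xi$ becomes the middle projection $pr_2\colon(p,\zeta,q)\mapsto \zeta$. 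In particular the source map on the pullback groupoid is $(p,\zeta,q)\mapsto q$.

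With this identification, the map
\[
\Xi_\bs\colon \Ho(\CF)\to \Ho(\CF_M){}_\bs\!\times_\pi P,\qquad \xi\mapsto(\Xi(\xi),\bs(\xi))
\]
is transported to the map
\[
\Psi\colon P{}_\pi\!\times_\bt\Ho(\CF_M){}_\bs\!\times_\pi P \to \Ho(\CF_M){}_\bs\!\times_\pi P,\qquad (p,\zeta,q)\mapsto(\zeta,q),
\]
i.e.\ projection onto the last two factors. The first step is then to check surjectivity of $\Psi$: given any $(\zeta,q)$ with $\bs(\zeta)=\pi(q)$, we need some $p\in P$ satisfying $\pi(p)=\bt(\zeta)$, and such a $p$ exists because $\pi$ is surjective.

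The second step is to verify openness of $\Psi$. I would observe that $\Psi$ is nothing but the pullback of $\pi\colon P\to M$ along the continuous map $\bt\circ pr_1\colon \Ho(\CF_M){}_\bs\!\times_\pi P\to M,\;(\zeta,q)\mapsto \bt(\zeta)$; that is, the domain of $\Psi$ is $P\,{}_\pi\!\times_{\bt\circ pr_1}(\Ho(\CF_M){}_\bs\!\times_\pi P)$ and $\Psi$ is the projection to the second factor. Since $\pi$ is a surjective submersion it is in particular an open map, and the pullback of an open map along a continuous map is always open (a basic open rectangle $U\times V$ in the fiber product maps to $V\cap (\bt\circ pr_1)^{-1}(\pi(U))$, which is open). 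Thus $\Psi$ is open, and transferring back via $\varphi$ we conclude that $\Xi_\bs$ is open and surjective, which by Def. \ref{def:fibrtop} means that $\Xi$ is a fibration of topological groupoids.

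There is no real obstacle here beyond recognizing the pullback structure: once Prop. \ref{prop:normalq} is applied, all the non-trivial content has been absorbed, and the statement reduces to the well-known fact that openness and surjectivity are preserved under pullback. The same reasoning gives the smooth statement (that $\Xi$ is a Lie groupoid fibration when the holonomy groupoids are smooth), since submersions are also stable under pullback, but this is not needed for the proposition as stated.
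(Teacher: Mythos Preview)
Your proof is correct and follows exactly the same approach as the paper: both invoke Thm.~\ref{thm:Homeo} and Prop.~\ref{prop:normalq} to identify $\Xi$ with the middle projection $pr_2$ from the pullback groupoid, after which the fibration property is immediate. The paper simply asserts that $pr_2$ is ``clearly a fibration,'' whereas you spell out the surjectivity and openness of $\Xi_\bs$ explicitly via the pullback-of-an-open-map argument; this is a harmless elaboration of the same idea.
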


\begin{proof}   There is  an {isomorphism} of topological groupoids     $\varphi\colon \CH(\CF)\fto \pi^{-1}(\CH(\CF_M))$ by {Thm. \ref{thm:Homeo}. By Prop. \ref{prop:normalq}, the map $\Xi$ corresponds to}   the projection $\text{pr}_2\colon \pi^{-1}(\CH(\CF_M))\fto \CH(\CF_M)$, which is clearly a fibration.
\end{proof}

{
\begin{rem}
As the proof of Prop. \ref{prop:pullbackfibr} shows, when $\CH(\CF)$ and $\CH(\CF_M)$ are smooth, then $\Xi$ is a fibration of Lie groupoids. The normal subgroupoid system $(\CK,R,\theta)$ corresponding to it (as in Thm.  \ref{thm:norm.sub.sys}) is given by $\CK=P\times_M 1_M\times_M P$, by $R=
P\times_M P$, and the following  Lie groupoid action $\theta$ of $R$ on $\CK\backslash \CG\cong 
\CH(\CF_M)\times_M P$:
$$\theta(p,q)[(\xi,q)]=[(\xi,p)].$$ Here the square bracket denotes the equivalence class in $\CK\backslash \CG$.
\end{rem}
}

\subsection{Fibrations from group actions}

 We exhibit one more case when $\Xi$ is a fibration. {In this whole subsection we assume the set-up at the beginning of \S \ref{sec:general}, that is:
				a foliated manifold $(P,\cF)$, and
				a free and proper action of a connected Lie group $G$ on $P$ preserving $\cF$. }
				
		{		Denote $M:=P/G$, denote by $\CF_M$ the induced singular foliation there, and use the short-hand notation $\Ho:=\Ho(\CF)$ and $\Ho':=\Ho(\CF_M)$.
The groupoid morphism $\Xi$   defined in eq. \eqref{eq:Xi} and  the lifted Lie group action
  $\vec{\star}$ in eq. \eqref{eq:vecstar}, thanks to Lemma \ref{lem:cover} and Lemma \ref{lem:orbits}, provide us with the following data:} 
 
\begin{enumerate}
	\item a free and proper action of a Lie group $G$ on a manifold $P$, with quotient map $\pi\colon P\fto M:=P/G$,
	\item an {open} surjective morphism of  {holonomy groupoids}  $\Xi\colon \Ho\fto \Ho'$ covering $\pi$,
	\item a group action $\vec{\star}$ of  $G$ on $\Ho$ by groupoid automorphisms covering the $G$-action on $P$ and 
	preserving each fiber of $\Xi$.
\end{enumerate}

\[\begin{tikzcd}
\Ho \ar[d, shift right=.2em, swap,]\ar[d,shift left=.2em,] \ar[r,"\Xi"] & \Ho' \ar[d, shift right=.2em, swap]\ar[d,shift left=.2em] \\
P \ar[r,"\pi"] & M
\end{tikzcd}\]

\subsubsection*{Fibrations of topological groupoids}
 
\begin{prop}\label{prop:123fibr}
	Assume {the set-up at the beginning of \S \ref{sec:general}}.  {Make the following technical assumption: the diffeomorphism $G\times P\to G\times P, (g,p)\mapsto (g,\hat{g}(p))$ is an automorphism of the product foliation $0\times \cF$. (This is a reasonable assumption since, for every $g\in G$, $\hat{g}$ is an automorphism of $(P,\cF)$).} Then the map $\Xi$ is a  fibration of topological groupoids.
\end{prop}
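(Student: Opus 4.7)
By Definition \ref{def:fibrtop}, we must show that
$$\Xi_\bs\colon \Ho(\cF)\fto \Ho(\cF_M)\,{}_\bs\!\times_\pi P,\quad \xi \mapsto (\Xi\xi,\bs\xi)$$
is surjective and open. Surjectivity is immediate from the material of \S\ref{sec:groidorbits}: given $(\zeta,p)$ in the target, pick some $\xi_0 \in \Xi^{-1}(\zeta)$ using surjectivity of $\Xi$. Since $\pi(\bs\xi_0)=\bs(\zeta)=\pi(p)$ and the $\pi$-fibers are $G$-orbits, there is a unique $g\in G$ with $\widehat g(\bs\xi_0)=p$; then Lemmas \ref{lem:cover} and \ref{lem:orbits} yield $\Xi_\bs(g\vec\star\xi_0)=(\zeta,p)$.

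For openness, my plan is to construct, for each path-holonomy bisubmersion $(U,\bt,\bs)$ for $\cF$, an enlarged bisubmersion
$$\widetilde U := (G\times U,\;\widetilde\bt,\;\widetilde\bs),\qquad \widetilde\bt(g,u):=\widehat g(\bt u),\quad \widetilde\bs(g,u):=\widehat g(\bs u),$$
the intent being that the technical hypothesis is precisely what makes $\widetilde U$ a bisubmersion for $\cF$. Granting this, the quotient map $Q\colon\widetilde U \to \Ho(\cF)$ is open, and since every $\xi \in \Ho(\cF)$ is represented by some $(e,u)\in\widetilde U$, every open subset of $\Ho(\cF)$ is covered by $Q$-images of opens in such $\widetilde U$. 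The composition with $\Xi_\bs$ takes the explicit form
$$\Phi:=\Xi_\bs\circ Q\colon \widetilde U \to \Ho(\cF_M)\,{}_\bs\!\times_\pi P,\quad (g,u)\mapsto ([u]_M,\,\widehat g(\bs u)),$$
where we used that $[(g,u)]_M = [u]_M$ since $(\widetilde U,\pi\widetilde\bt,\pi\widetilde\bs)$ is simply $G\times(U,\pi\bt,\pi\bs)$ as a bisubmersion for $\cF_M$. It suffices then to show $\Phi$ is open, which I plan to prove by factoring $\Phi$ through $\Ho(\cF_M)\,{}_\bs\!\times_\pi (G\times P)$ via $(g,u)\mapsto ([u]_M,g,\bs u)$, post-composed with the fiberwise action $(\zeta,g,p)\mapsto(\zeta,\widehat g(p))$; the latter is open because the action map $G\times P\to P$ is a surjective submersion.

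The main obstacle is the verification that $\widetilde U$ is a bisubmersion for $\cF$, namely
$$\widetilde\bs^{-1}(\cF)=\widetilde\bt^{-1}(\cF)=\Gamma_c(\ker d\widetilde\bs)+\Gamma_c(\ker d\widetilde\bt).$$
This requires controlling how $\widehat g_{\ast} X$ for $X\in\cF$ depends smoothly on $g\in G$. The hypothesis that $(g,p)\mapsto(g,\widehat g(p))$ is an automorphism of the module $0\times\cF$ on $G\times P$ translates precisely into such a smooth-dependence statement, and should be exactly what is needed to identify the pullback modules through $\widetilde\bs$ and $\widetilde\bt$ with the sum of kernel modules. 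Absent this hypothesis, fundamental vector fields of the $G$-action can interact with $\cF$ in ways that break the module identity, consistent in spirit with the failure exhibited in Example \ref{ex:fail}.
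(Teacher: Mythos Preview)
Your surjectivity argument coincides with the paper's. For openness, your route is genuinely different from the paper's: the paper works directly on $\Ho(\cF)$, taking a slice $S\subset P$ through $\bs(\xi)$, a relatively compact $\sigma\subset\bs^{-1}(S)\cap\cO$, and a small $\widetilde G\subset G$, then proves that $V:=\widetilde G\vec\star\sigma$ is open in $\Ho(\cF)$ and that $\Xi_\bs(V)$ is open. Your idea of building an enlarged bisubmersion $\widetilde U=G\times U$ and exploiting the abstract openness of quotient maps from bisubmersions is more conceptual and potentially shorter. Two remarks are in order.

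First, your ``main obstacle'' --- that $\widetilde U$ is a bisubmersion for $\cF$ --- is not just a formality: it is essentially the same construction as the paper's core technical step. The paper (in its Step~3 Claim) uses the hypothesis to produce, for $g$ near $e$, a smoothly varying family of bisubmersion morphisms $\phi^g\colon gU\to U$; assembling these gives a submersion $\mu\colon(g,u)\mapsto\phi^g(u)$ from a neighborhood of $\{e\}\times U$ to $U$ with $\bt\circ\mu=\widetilde\bt$, $\bs\circ\mu=\widetilde\bs$, whence \cite[Lemma~2.3]{AndrSk} makes (that neighborhood in) $\widetilde U$ a bisubmersion adapted to the atlas. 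So your obstacle is real and its resolution coincides with the paper's key lemma; you should expect to do that work.

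Second, there is a genuine gap in your factorization $\Phi=\Phi_2\circ\Phi_1$ with $\Phi_1(g,u)=([u]_M,g,\bs u)$. You argue only that $\Phi_2$ is open, but you also need $\Phi_1$ open, which reduces (after stripping the $G$-factor) to the openness of $\psi\colon U\to\Ho(\cF_M)\,{}_{\bs}\!\times_\pi P$, $u\mapsto([u]_M,\bs u)$. But $\psi=\Xi_\bs\circ Q_U$ with $Q_U\colon U\to\Ho(\cF)$ the (open, surjective) quotient map, so asking $\psi$ to be open is equivalent to what you are trying to prove --- this is circular. The fix is to factor $\Phi$ differently: the map
\[
G\times U\;\longrightarrow\;U\,{}_{\pi\bs}\!\times_\pi P,\qquad (g,u)\mapsto(u,\widehat g\,\bs u)
\]
is a diffeomorphism (freeness of the $G$-action), and
\[
U\,{}_{\pi\bs}\!\times_\pi P\;\longrightarrow\;\Ho(\cF_M)\,{}_{\bs}\!\times_\pi P,\qquad (u,p)\mapsto([u]_M,p)
\]
is open because on basic opens $(W_U\times W_P)\cap(U\times_M P)$ its image equals $(Q_M(W_U)\times W_P)\cap(\Ho(\cF_M)\times_M P)$, and $Q_M$ is open. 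Their composite is exactly your $\Phi$, so $\Phi$ is indeed open and the rest of your argument goes through. In short: your strategy is sound, but the specific factorization you propose needs to be replaced by this one.
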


\begin{proof} According to Def. \ref{def:fibrLie}, we need to show that the map
$$\Xi_{{\bs}}\colon\CH\fto \CH' {}_\bs\!\times_\pi P ;\xi\mapsto (\Xi(\xi), \bs(\xi))$$ is surjective {and open}. 
{For the surjectivity we argue as follows.}	Let $({\zeta},p)\in \CH' {}_\bs\!\times_\pi P$. There exists a $\xi\in \CH$ such that $\Xi(\xi)={\zeta}$. Then $\pi(\bs(\xi))=\pi(p)$,  which means that there exists a (unique) $g\in G$ such that $g\bs(\xi)=p$. This implies $\Xi_{\bs}(g\vec{\star}\xi)=({\zeta},p)$ i.e. $\Xi_{\bs}$ is surjective.

To prove that $\Xi_{\bs}$ is an open map, let $\cO\subset \Ho$ be an open subset. It suffices to prove that for any point $\xi\in \cO$ there is a subset $V\subset\cO$ containing $\xi$ such that $\Xi_{\bs}(V)$ is open. Take a slice  through $\bs(\xi)$, i.e. a
sufficiently small submanifold $S\subset P$ through $\bs(\xi)$ transverse to the $\pi$-fibers. {We will  construct such $V$ as $\widetilde{G}\vec{\star}\sigma$, where $\widetilde{G}$  is a  neighborhood   of the identity element  in $G$, 
and $\sigma$ is a neighborhood of $\xi$ in $\bs^{-1}(S)$.
}

\noindent{\bf Step 1:} \emph{There is a relatively compact neighborhood $\sigma$ of $\xi$ in $\bs^{-1}(S)$ such that its closure satisfies $\bar{\sigma} \subset \cO$.}

To construct $\sigma$, take a bisubmersion $U$ and a point $u\in U$ such that the quotient map $q_U\colon U\to H$ satisfies $q_U(u)=\xi$. As $U$ is a manifold, there exists a relatively compact (open) neighborhood $U_0$ of $u$, and furthermore we can assume that its closure $\bar{U_0}$ lies in the open subset $q_U^{-1}(\cO)$. 
Since $q_U$ is an open map by \cite[Lemma 3.1]{MEsingfol}, $q(U_0)$ is open. 
Hence $$\sigma:=q_U(U_0)\cap \bs^{-1}(S)$$ is an open subset of $\bs^{-1}(S)$.

We now show that $\bar{\sigma}$, the closure of $\sigma$ in $\bs^{-1}(S)$, is compact.
To do so we make use of  $\breve{\sigma}$, the closure of $\sigma$ in $\bs^{-1}(\bar{S})$. 
We have $\breve{\sigma}\subset q_U(\bar{U_0})\cap \bs^{-1}(\bar{S})$. This shows that
$\breve{\sigma}$ is compact, being a closed subset of a compact set. Further it shows that, 
shrinking $U_0$ if necessary, we can arrange that
 $\breve{\sigma}$ is contained in $\bs^{-1}({S})$. This in turn implies that $\bar{\sigma}\subset \breve{\sigma}$: the r.h.s is a closed subset of $\bs^{-1}({S})$ containing $\sigma$, and by definition the l.h.s. is the smallest such subset. Hence $\bar{\sigma}$, being a closed subset of a compact set, is itself compact.

{
\noindent{\bf Step 2:} \emph{There is a neighborhood $\widetilde{G}\subset G$ of the identity element $e$ such that $\widetilde{G}\vec{\star}\sigma \subset \cO$.}} 

{The preimage of $\cO$ under
the action map $\vec{\star}\colon G\times \Ho\to\Ho$ is open, and by Step 1 it contains $\{e\}\times \bar{\sigma}$.  Hence it contains $\widetilde{G}\times \bar{\sigma}$
for some neighborhood $\widetilde{G}$ of $e$, using the compactness of $\bar{\sigma}$}.

{
\noindent{\bf Step 3:} \emph{$V:= \widetilde{G}\vec{\star}\sigma$ is an open subset of $\cH$.}
}

{
Notice that, for every $g\in \widetilde{G}$, 
\begin{equation}\label{eq:setgstar}
g\vec{\star}\sigma=\{q_{gU}(v):v\in 
\bs_U^{-1}(S)\cap U_0\},
\end{equation}
 {where $q_{gU}$ denotes the quotient map of the bisubmersion obtained as in Lemma \ref{def:gW}.}
 We want to describe this set in terms of the bisubmersion $U$.
This is possible because $q_U(U)$ is an open subset of $\CH$ containing $\sigma$, thus $g\vec{\star}\sigma$ will lie in $q_U(U)$ provided $g$ is close enough to the unit element $e\in G$. In the following claim we might need to shrink $\widetilde{G}$
to a smaller neighborhood of $e$.
}

\noindent{\bf Claim:}
{
  \emph{Let $g\in \widetilde{G}$. There is  
morphism of bisubmersions  $\phi^g\colon gU\to U$ (defined only on an open subset of $gU)$) which is a diffeomorphism onto its image and depends smoothly\footnote{Recall that the manifold underlying the bisubmersion $gU$ is $U$, and thus is independent of $g$.}
 on $g$.}
}

{
Assume first that $U$ is a path holonomy bisubmersion, associated to local generators $X_1,\dots,X_k$ of $\cF$ near $\bs(\xi)$. Fix $Y_i\in \Gamma(\ker(d\bs_U))$ such that $d\bt_U(Y_i)=X_i$ (this is possible by the proof of \cite[Prop. 2.10 b)]{AndrSk}).
We then have $d\bt_{gU}(Y_i)=\hat{g}_*X_i$. There exist smooth functions $c_{ij}^g$, defined on an open subset of $\bs_U(U)\cap \hat{g}(\bs_U(U))$, such that $
\hat{g}_*X_i=\sum_jc_{ij}^gX_j$. Further, these functions can be chosen to vary smoothly with $g$, by the technical assumption in the statement of the proposition and since $X_1,\dots,X_k$ -- viewed as vector fields on $G\times P$ -- locally generate the singular foliation $0\times \cF$. 
}

{
Consider the map 
\begin{equation}\label{eq:morpgUU}
\phi^g\colon gU\to U,   \exp_{(0,x)}(\sum \lambda_iY_i)\mapsto \exp_{(0,\hat{g}x)}(\sum \lambda_i (\bt_U^*c_{ij}^g)Y_j),
\end{equation}
which actually is defined only on a open subset of $gU$ (namely, when $x\in \bs_U(U)\cap \hat{g}^{-1}(\bs_U(U))$.)
It preserves source fibers, since the $Y_i$ are tangent to the source fibers of both bisubmersions. Applying $\bt_{gU}$ to the above point of the domain and applying $\bt_U$ to its image we obtain the same point of $P$ (namely $\exp_{(0,\hat{g}x)}(\sum \lambda_i\hat{g}_*X_i)$). Thus $\phi^g$ is a morphism of bisubmersions. Further $\phi^g$ is a diffeomorphism onto its image
(see the proof of \cite[Prop. 2.10 b)]{AndrSk}; this was also checked explicitly in \cite[Lemma 2.6]{AZ1}). Finally, looking at \eqref{eq:morpgUU} it is clear that $\phi^g$ depends smoothly on $g$. 
}

In the general case, $U=U_l\circ\dots\circ U_1$ is a composition of path-holonomy bisubmersions. We can apply the above construction to obtain, for each $k=1,\dots,l$, a morphism of bisubmersions  $\phi_k\colon gU_k\to U_k$.
Assembling them, we obtain a 
morphism of bisubmersions  $\phi^g\colon gU\to U$ satisfying the properties of the claim. This proves the claim.

{
From eq. \eqref{eq:setgstar} and the claim we deduce that 
 $g\vec{\star}\sigma=\{q_{U}(\phi^gv):v\in 
\bs_U^{-1}(S)\cap U_0\}$.
Define $$\Phi\colon \widetilde{G}\times (\bs_U^{-1}(S)\cap U_0)\to U,\;\; (g,v)\mapsto \phi^g(v).$$ Then $\tilde{G}\vec{\star}\sigma=q_U(image(\Phi))$.
As $q_U$ is an open map, it suffices to argue that $image(\Phi)$ is open in $U$.
}

{The map $\Phi$ fits in the  commutative diagram}\\
\[\begin{tikzcd}
\widetilde{G}\times (\bs_U^{-1}(S)\cap U_0) \ar[rr,"\Phi"] \ar[rd,"pr_1"] & & U \ar[dl, "\alpha"]\\
 & \widetilde{G} &
\end{tikzcd},\]
{where $pr_1$ is the first projection and $\alpha$ is mapping $u$ to the unique group element $g$ such that $\bs_U(u)\in \hat{g}S$. The  maps  $pr_1$ and $\alpha$  are submersions. Further, for every $g\in \widetilde{G}$, the map  $$\Phi|_{pr_1^{-1}(g)}\colon \bs_U^{-1}(S)\cap U_0\to \alpha^{-1}(g)=\bs_U^{-1}(\hat{g}S)$$ equals $\phi^g$, which is an diffeomorphism onto its image by the above claim.} {Hence $\Phi$ is a diffeomorphism onto its image, which consequently  is open in $U$ by dimension reasons.}

{
\noindent{\bf Step 4:} \emph{$\Xi_{\bs}(V)$ is open in $ \CH' {}_\bs\!\times_\pi P$.}
}

 {
As $\Xi\times \bs\colon \Ho\times \Ho\to \Ho'\times P$ is an open map,  and by Step 3, the image of 
$V\times V$ under this map is open. Hence its intersection with $ \CH' {}_\bs\!\times_\pi P$ is open there. This intersection is exactly 
$$A:=\{(\Xi(v_1),\bs(v_2): v_1, v_2\in V \text{ and } \pi(\bs(v_1))=\pi(\bs(v_2))\}.$$
We now show that  $A=\Xi_{\bs}(V)$. We just need to prove ``$\subset$'', since the other inclusion is obvious.
To this aim, take an arbitrary element $a:=(\Xi(v_1),\bs(v_2))$ of $A$. Since $\bs(v_1)$ and $\bs(v_2)$ lie in the same $\pi$-fiber, there is a unique $g\in G$ such that $\hat{g}\cdot\bs(v_1)=\bs(v_2)$. It is immediate to check that  $\Xi_{\bs}(g \vec{\star} v_1)=a$, hence we are done if we show that $g \vec{\star} v_1\in V$. To this aim, write $v_i=g_i \vec{\star} \xi_i$ for unique 
$g_i\in \widetilde{G}$ and $\xi_i\in \sigma$ ($i=1,2$). We have $\bs(\xi_1)=\bs(\xi_2)$, since these elements belong to the same $\pi$-fiber and $\bs(\sigma)$ lies inside a slice transverse to such fibers. This implies that $g=g_2g_1^{-1}$. Hence $g \vec{\star} v_1=g_2 \vec{\star} \xi_1$, which by construction lies in $V$.
 }
\end{proof}

{Assuming the set-up at the beginning of \S \ref{sec:general}, in Lemma \ref{prop:fib.xi0} and Prop. \ref{cor:grpd.act.hol} we  describe   the fibers of $\Xi\colon \CH\fto \CH'$.} 
Denote $\CK:=\ker(\Xi)$, a  topological subgroupoid  of $\CH$ with space of objects $P$.  Given a fibration of open topological groupoids, the kernel -- called ``fibre'' in the terminology of \cite{BussMeyer} -- is an 
open topological subgroupoid   
\cite[Lemma 2.3]{BussMeyer}. Hence $\CK$ is an open topological subgroupoid  when the technical assumption in Prop. \ref{prop:123fibr} is satisfied.  

\begin{lem}\label{prop:fib.xi0}
	The fibers of $\Xi$ are given by the orbits of the action of $G$ on $\CH$ composed\footnote{Recall that the composition (multiplication) of the groupoid $\CH$ is denoted by $\circ$.}
	with elements in $\CK$. More precisely, the fiber through $\xi\in \CH$ is 
	$$\CK\circ (G\vec{\star} \xi) :=\{\chi\circ(g\vec{\star} \xi)  :\chi\in \CK \y g\in G\}.$$	
\end{lem}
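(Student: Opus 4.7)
The plan is to prove the two inclusions of sets separately, using that $\Xi$ is a groupoid morphism together with the two key facts already established: $\Xi(g\vec{\star}\xi)=\Xi(\xi)$ for every $g\in G$ and $\xi\in\cH$ (Lemma \ref{lem:orbits}), and the fact that the $G$-orbits on $P$ coincide with the $\pi$-fibers (which follows from $\pi\colon P\to M=P/G$ being the quotient map of the free proper $G$-action).

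For the inclusion $\CK\circ(G\vec{\star}\xi)\subset \Xi^{-1}(\Xi(\xi))$, I would start with an arbitrary composable pair $(\chi,g\vec{\star}\xi)$ with $\chi\in\CK$, and simply compute
\[
\Xi(\chi\circ(g\vec{\star}\xi))=\Xi(\chi)\circ\Xi(g\vec{\star}\xi)=1_{\pi(\bs(\chi))}\circ\Xi(\xi)=\Xi(\xi),
\]
using that $\Xi(\chi)$ is a unit since $\chi\in\ker(\Xi)$, and Lemma \ref{lem:orbits} for the second factor.

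For the reverse inclusion, let $\eta\in\cH$ with $\Xi(\eta)=\Xi(\xi)$. Applying the source map $\bs_M$ of $\cH'$ gives $\pi(\bs(\eta))=\pi(\bs(\xi))$, so $\bs(\eta)$ and $\bs(\xi)$ lie in the same $\pi$-fiber, equivalently in the same $G$-orbit. By freeness of the $G$-action there is a unique $g\in G$ with $\hat{g}(\bs(\xi))=\bs(\eta)$. Since the lifted action $\vec{\star}$ covers the $G$-action on $P$ (Lemma \ref{lem:cover}), the element $g\vec{\star}\xi$ has source $\bs(\eta)$, so the product $\chi:=\eta\circ(g\vec{\star}\xi)^{-1}$ is well-defined in $\cH$. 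A direct computation
\[
\Xi(\chi)=\Xi(\eta)\circ\Xi(g\vec{\star}\xi)^{-1}=\Xi(\xi)\circ\Xi(\xi)^{-1}=1_{\bt_M(\Xi(\xi))}
\]
shows that $\chi\in\CK$, and by construction $\eta=\chi\circ(g\vec{\star}\xi)$, giving the desired decomposition.

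This proof is essentially bookkeeping, and I do not expect any real obstacle: the ingredients (groupoid-automorphism property of $\vec{\star}$, $\Xi$-invariance of orbits, and the correspondence between $G$-orbits and $\pi$-fibers from the free proper action) are all available as stated or immediate from the setup. The only small subtlety is noticing that the choice of $g$ in the reverse inclusion depends on $\eta$ (and on the representative $\xi$ of the fiber), but uniqueness of $g$ from freeness of the action makes this harmless.
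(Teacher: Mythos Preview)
Your proof is correct and follows essentially the same route as the paper's: both inclusions are shown exactly as you describe, using Lemma~\ref{lem:orbits} for $\Xi$-invariance of the $\vec{\star}$-orbits, the groupoid morphism property of $\Xi$, and the identification of $\pi$-fibers with $G$-orbits to find $g$ and form $\chi=\eta\circ(g\vec{\star}\xi)^{-1}$. Your version is slightly more explicit (you spell out the first inclusion and invoke freeness for the uniqueness of $g$), but the argument is the same.
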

\begin{proof}
	{Fix $\xi_1\in \CH$. {Thanks to Lemma \ref{lem:orbits},} the above subset $\CK\circ (G\vec{\star} \xi_1)$ is certainly contained in the $\Xi$-fiber through $\xi_1$.}
	
	{ To show the converse,} let $\xi_2$ lie in the same $\Xi$-fiber as $\xi_1$, then $\bs(\xi_2)$ and $\bs(\xi_1)$ lie in the same $\pi$-fiber. 
	Let $g\in G$ such that $\hat{g}\bs(\xi_1)=\bs(\xi_2)$. {As this equals $\bs(g\vec{\star} \xi_1)$, the groupoid composition} 
	{$\xi_2\circ (g\vec{\star} \xi_1)^{-1}$} is well-defined, and
	$$\Xi\left(\xi_2\circ (g\vec{\star} \xi_1)^{-1}\right)=\Xi(\xi_2)\circ\Xi(g\vec{\star} \xi_1)^{-1}=\Xi (\xi_2)\circ \Xi(\xi_1)^{-1}= 1_{\pi(\bt(\xi_2))},$$
	where we used that $\Xi$ is a groupoid morphism and the action of $G$ preserves the $\Xi$ fibers, respectively in the {first and second equality}. As a consequence, {$\xi_2\circ (g\vec{\star} \xi_1)^{-1}\in \ker \Xi=\CK$}.
\end{proof}

\begin{rem}
	While the fibers of a group {morphism} are just translates of the kernel, for morphisms of groupoids over different bases this is no longer true. This explains why the description of the fibers in Lemma \ref{prop:fib.xi0} is slightly involved.
\end{rem}

{A first consequence of Lemma \ref{prop:fib.xi0} is that} 
the fibers of $\Xi\colon \CH\fto \CH'$ {are orbits of a groupoid action}:

\begin{prop}\label{cor:grpd.act.hol} 
	There is a  {topological} groupoid structure on $\CK\times G$ and a  groupoid action of $\CK\times G$ on $\bt\colon \CH\fto P$, whose orbits coincide with the fibers of $\Xi$.	
\end{prop}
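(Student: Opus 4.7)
The plan is to endow $\CK\times G$ with a semidirect-product groupoid structure over $P$, using the fact (Lemma \ref{lem:cover}) that $\vec{\star}$ makes $G$ act on $\CK$ by topological groupoid automorphisms. Concretely, I would set
$$\bs(\chi,g):=\widehat{g}^{-1}(\bs_{\CK}(\chi)),\qquad \bt(\chi,g):=\bt_{\CK}(\chi),$$
the composition
$$(\chi_2,g_2)\circ(\chi_1,g_1):=(\chi_2\circ(g_2\vec{\star}\chi_1),\,g_2 g_1),$$
the identity at $p\in P$ as $(1_p,e)$, and inverses as $(\chi,g)^{-1}:=(g^{-1}\vec{\star}\chi^{-1},\,g^{-1})$. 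The composition is defined exactly when $\bt(\chi_1,g_1)=\bs(\chi_2,g_2)$, i.e. $\bs_{\CK}(\chi_2)=\widehat{g_2}(\bt_{\CK}(\chi_1))=\bt(g_2\vec{\star}\chi_1)$, so the right-hand side makes sense in $\CK$. The groupoid axioms then reduce to a routine computation exploiting that $\vec{\star}$ is a group action and that each $g\vec{\star}(-)$ preserves the groupoid structure of $\CK$. Continuity of the structure maps is inherited from the continuity of $\vec{\star}$ and of the operations on $\CH$.

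Next, I would define the action on $\bt\colon\CH\fto P$ by
$$(\chi,g)\cdot\xi:=\chi\circ(g\vec{\star}\xi),$$
defined precisely when $\bs(\chi,g)=\bt(\xi)$, i.e.\ when $\bs_{\CK}(\chi)=\bt(g\vec{\star}\xi)$, so that the composition on the right makes sense in $\CH$. Checking the action axioms amounts to verifying that $\bt((\chi,g)\cdot\xi)=\bt(\chi,g)$, that the unit acts trivially, and that the associativity identity
$$((\chi_2,g_2)\circ(\chi_1,g_1))\cdot\xi=(\chi_2,g_2)\cdot((\chi_1,g_1)\cdot\xi)$$
holds; the latter again reduces to $g_2\vec{\star}(-)$ being a groupoid morphism on $\CH$ together with $\vec{\star}$ being a left group action.

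Finally, the orbit through $\xi\in\CH$ is by definition
$$\{\chi\circ(g\vec{\star}\xi):\chi\in\CK,\,g\in G,\,\bs_{\CK}(\chi)=\widehat{g}\,\bt(\xi)\}=\CK\circ(G\vec{\star}\xi),$$
which coincides with the $\Xi$-fiber through $\xi$ by Lemma \ref{prop:fib.xi0}. I do not anticipate a main obstacle: once the twist $\bs(\chi,g)=\widehat{g}^{-1}\bs_{\CK}(\chi)$ in the source map is in place (which is what forces the appearance of $g\vec{\star}(-)$ in both the composition and the inverse), everything else is a formal verification, and the identification of orbits with $\Xi$-fibers is immediate from the earlier lemma.
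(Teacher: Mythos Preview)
Your proposal is correct and follows essentially the same approach as the paper: the paper also equips $\CK\times G$ with the semidirect-product groupoid structure (citing Brown and Brown--Higgins for the general construction), with exactly your source, target, and composition, defines the action by the same formula $(\chi,g)\smallwhitestar\xi=\chi\circ(g\vec{\star}\xi)$, and invokes Lemma~\ref{prop:fib.xi0} to identify the orbits with the $\Xi$-fibers. Your write-up is slightly more explicit about units, inverses, and the well-definedness conditions, but there is no substantive difference.
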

An instance of {this proposition} is Ex. \ref{ex:non.sing.q}, where we have $G=U(1)$ and $\ker\Xi =1_P$.

\begin{proof}
Since the Lie group $G$ acts by groupoid automorphisms on the groupoid $\CK$, we can form the semidirect product groupoid (see \cite[\S 2]{BrownCoefficients}  and \cite[\S 11.4]{TopGrpds}). We obtain a groupoid structure on $\CK\times G$ with space of objects $P$, for which
\begin{itemize}
	\item[a)] the source and target maps are respectively {$(\xi,g)\mapsto g^{-1}\bs(\xi)$ and $(\xi,g)\mapsto \bt(\xi)$},
 	\item[b)] the composition is $(\xi_2,g_2)\circ (\xi_1,g_1)\mapsto (\xi_2\circ (g_2\vec{\star} \xi_1),g_2 g_1)$.
\end{itemize}
One checks 
that the groupoid $\CK\times G$ acts on the map $\bt\colon \CH\fto P$ via
\begin{equation}\label{eq:stargroid}
\smallwhitestar\colon (\CK\times G) \times_P \CH {\to \CH};\;\; \left(( {\chi},g),{\xi}\right) \mapsto ( {\chi},g)\smallwhitestar {\xi}:={\chi}\circ(g\vec{\star} {\xi}).
\end{equation}
The orbits of this groupoid action 
 are precisely the fibers of $\Xi$, by Lemma \ref{prop:fib.xi0}. 
\end{proof}

\subsubsection*{Fibrations of Lie groupoids}				
	
{To conclude this section, we show that for Lie groupoids $\Xi$ is a fibration in the sense of Def. \ref{def:fibrLie}.}
Denote $\CK:=\ker(\Xi)$.  
{Denote $$R=
P\times_M P\cong G\times P,$$ where the isomorphism is given by $(q,p)\mapsto (g,p)$ for $g\in G$  the unique element satisfying $gp=q$.
Note that since the  action $\vec{\star}$ of $G$ on $\CH$ preserves each fiber of $\Xi$, we obtain by restriction a group action of $G$ on $\CK$, also by groupoid automorphisms. 
This ensures that the following groupoid action $\theta$ of $R$ on $\bs\colon \CK\backslash \CH\fto P$ is well-defined:
 $$\theta(g,p)(\CK {\xi})=\CK (g\vec{\star}{\xi}).$$
Hence $\theta$ essentially amounts to the lifted $G$ action.
}

\begin{prop}\label{prop:norsubG}  
 {	Assume the set-up at the beginning of \S \ref{sec:general}, and that  $\CH$ and $\CH'$ are Lie groupoids. 
	Then: 
	\begin{itemize}
	\item[i)] $\Xi$ is a   fibration of Lie groupoids,
	 	\item[ii)] 
	 $(\CK,R,\theta)$ is the normal subgroupoid system corresponding to it, via Thm. \ref{thm:norm.sub.sys}.
\end{itemize}
}	  
\end{prop}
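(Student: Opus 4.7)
The plan for (i) is to verify the two conditions in Def.~\ref{def:fibrLie}. Since the $G$-action on $P$ is free and proper, the quotient map $\pi\colon P\to M$ is a surjective submersion. By Rem.~\ref{rem:ss}, when both $\CH$ and $\CH'$ are Lie groupoids the morphism $\Xi\colon \CH\to\CH'$ is a surjective submersion, so the fibered product $\CH'\,{}_\bs\!\times_\pi P$ is a smooth manifold and $\Xi_\bs\colon \CH\to\CH'\,{}_\bs\!\times_\pi P$ is well defined. Surjectivity of $\Xi_\bs$ was already established in the proof of Prop.~\ref{prop:123fibr} and uses only the lifted $G$-action $\vec\star$. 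It remains to verify that $\Xi_\bs$ is a submersion.

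Fix $\xi\in\CH$, set $\zeta:=\Xi(\xi)$ and $p:=\bs(\xi)$, and let $(v,w)$ be a tangent vector of $\CH'\,{}_\bs\!\times_\pi P$ at $(\zeta,p)$, so that $d\bs(v)=d\pi(w)$. Since $\Xi$ is a submersion, choose $u\in T_\xi\CH$ with $d\Xi(u)=v$. Then $w-d\bs(u)$ lies in $\ker d\pi|_p$, hence equals $X_P(p)$ for some $X\in\g$, where $X_P$ denotes the fundamental vector field on $P$. The lifted $G$-action $\vec\star$ provides a fundamental vector field $X_\CH$ on $\CH$ satisfying $d\bs(X_\CH)=X_P$ (because $\vec\star$ covers the $G$-action on $P$, by Lemma~\ref{lem:cover}) and $d\Xi(X_\CH)=0$ (because $\vec\star$ preserves each $\Xi$-fiber, by Lemma~\ref{lem:orbits}). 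Therefore $u+X_\CH(\xi)$ is a tangent vector at $\xi$ mapping to $(v,w)$ under $d\Xi_\bs$, proving surjectivity on tangent spaces.

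For (ii) the plan is to compute the normal subgroupoid system associated to the fibration $\Xi$ via Thm.~\ref{thm:norm.sub.sys}. Part~(1) there assigns to $\Xi$ the smooth congruence $(\CR,R)$ with $\CR=\CH\,{}_\Xi\!\times_\Xi\CH$ and $R=P\times_\pi P\cong G\times P$. Applying Part~(2), an element of $\CH$ is $\CR$-related to an identity precisely when it lies in $\ker\Xi=\CK$, so the first datum is exactly $\CK$; the base equivalence relation is $R$; and the induced action $\tilde\theta$ sends $((g,p),\CK\xi)$, for $\bs(\xi)=p$, to $\CK\xi'$ for any $\xi'\in\CH$ with $\Xi(\xi')=\Xi(\xi)$ and $\bs(\xi')=gp$. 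Such a $\xi'$ is provided by $g\vec\star\xi$ itself: it has source $g\bs(\xi)=gp$ and lies in the same $\Xi$-fiber as $\xi$ by Lemma~\ref{lem:orbits}. Hence $\tilde\theta$ agrees with $\theta$, showing that $(\CK,R,\theta)$ is indeed the normal subgroupoid system associated to $\Xi$.

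The main obstacle is the tangent-space surjectivity in part (i): one must realize the $\ker d\pi$-component of a tangent vector in the target as the source-projection of a vector in $\ker d\Xi\subset T_\xi\CH$. This is precisely what the lifted action $\vec\star$ achieves, its infinitesimal generators simultaneously projecting onto the generators of the $G$-action on $P$ and being annihilated by $d\Xi$; once this is in place, the identification of the normal subgroupoid system with $(\CK,R,\theta)$ amounts to unwinding the definitions.
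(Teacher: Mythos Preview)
Your proof is correct, but it proceeds in the opposite order from the paper's. The paper first verifies directly that $(\CK,R,\theta)$ satisfies the axioms of a normal subgroupoid system (Def.~\ref{def:nss}): the smoothness of $\CK$ and $R$ comes from $\Xi$ and $\pi$ being surjective submersions, the well-definedness and smoothness of $\theta$ from the smoothness of $\vec\star$, and the three compatibility conditions from $\vec\star$ acting by groupoid automorphisms covering the $G$-action on $P$. Then Thm.~\ref{thm:norm.sub.sys} yields a smooth congruence $(\CR,R)$, and Lemma~\ref{prop:fib.xi0} identifies $\CR$ with $\CH\times_\Xi\CH$; hence the quotient map of this congruence is $\Xi$ itself, which is therefore a fibration. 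In short, the paper establishes (ii) first and deduces (i) from the Mackenzie equivalence.

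You instead prove (i) by a direct tangent-space computation: lifting a tangent vector through $d\Xi$ and correcting the source component with an infinitesimal generator of $\vec\star$, which lies in $\ker d\Xi$ by Lemma~\ref{lem:orbits} and $\bs$-projects correctly by Lemma~\ref{lem:cover}. Then you read off (ii) from the fibration via Thm.~\ref{thm:norm.sub.sys}. Your route is more elementary in that it avoids checking the axioms of Def.~\ref{def:nss} and does not appeal to Lemma~\ref{prop:fib.xi0}; the paper's route is more structural, obtaining the submersivity of $\Xi_\bs$ as a byproduct of the congruence machinery rather than by explicit local analysis. Both arguments rest on the same underlying fact, namely that the lifted action $\vec\star$ moves along source fibers of $\pi$ while staying inside the $\Xi$-fibers. (A minor point: in your tangent condition on the fibered product, $d\bs(v)$ should read $d\bs'(v)$, the source map of $\CH'$.)
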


\begin{proof}  
 We first prove that $(\CK,R,\theta)$ is a normal subgroupoid system. 
Because $\Xi$ and $\pi$ are surjective submersion {(see Rem. \ref{rem:ss})}, we have that $\CK$ is an closed, embedded wide Lie subgroupoid of $\CH$ and $R$ is a smooth equivalence {relation}.
	Because $\vec{\star}$ is a smooth Lie group action, 
	we have that $\theta$ is also a smooth action.
{The three conditions in Def. \ref{def:nss} are satisfied because $\vec{\star}$  is a group action on $\CH$ by Lie groupoid automorphisms, covering the group action of $G$ on $P$.}

Since $(\CK,R,\theta)$ is a normal subgroupoid system,   by Theorem \ref{thm:norm.sub.sys} $(\CR,R)$ is a smooth congruence, where
$$\CR=\{{(\xi',\xi)\in \CH\times\CH} \st (\bs(\xi'),\bs(\xi))\in R \y \theta(\bs(\xi'),\bs(\xi))\CK \xi=\CK \xi'\}.$$
{Lemma \ref{prop:fib.xi0} shows that} $(\xi',\xi)\in \CR$ if and only if $\Xi(\xi')=\Xi(\xi)$.  
This means that {the quotient map of the smooth congruence $(\CR,R)$ is exactly $
\Xi$. In particular, by Theorem \ref{thm:norm.sub.sys}, $\Xi$ is a fibration.}
\end{proof}

\begin{rem}\label{rem:hypo}
{Prop. \ref{prop:norsubG} holds also replacing the hypothesis that $\CH'$ is an embedded Lie groupoid with the hypothesis that $\CK$ is a Lie subgroupoid of $\CH$, with the same proof.}
\end{rem}

{When $\CH(\cF)$ is a Hausdorff Lie groupoid, Remark \ref{rem:S} allows to give a description of $\CK$ that does not  make   reference to the morphism $\Xi$. Namely, $\CK$ consists of the elements $\xi\in \CH(\cF)$ 
that carry a diffeomorphism $\varphi$ such that, 
for some slice $S$ through $\bs(\xi)$ transverse to the $\pi$-fibers,  this diagram commutes:}
\[\begin{tikzcd}
S  \ar[swap,"\pi",dr] \ar[rr,"\varphi"] & & \varphi(S)  \ar[dl,"\pi"] \\
& M  & 
\end{tikzcd}\]

{
In the following   simple example  we describe $\CH(\cF_M)$ using Prop. \ref{prop:norsubG}.}
\begin{ex}\label{ex:quotsim}
{Let $P=S^1\times \RR$ be the cylinder with coordinates $\theta$ and $y$, endowed with the (free) action of $G=(\RR,+)$ by vertical translations. The action preserves the (regular) foliation $$\cF:=\left\langle \pd{\theta} +\lambda \pd{y}\right\rangle$$ by spirals, where $\lambda$ is fixed non-zero real number. The quotient $M:=P/G$ is $S^1$. It is easy to see that the induced foliation $\CF_M$ is the full foliation, but we want to describe  
$\CH(\cF_M)$ without using this fact.
}

{
 We have $\CH(\cF)=\RR\times P$, the transformation groupoid of the flow of $\pd{\theta} +\lambda \pd{y}$. By the above characterization of $\CK$, we have $\CK=\ZZ\times P$, which is an embedded Lie subgroupoid of $\CH(\cF)$. Hence $\CK\backslash \CH(\cF)=(\RR/\ZZ)\times P$, and quotienting by the  groupoid action $\theta$ we obtain  $(\RR/\ZZ)\times S^1$. One checks easily that the induced groupoid structure is given by the transformation groupoid of the Lie group $\RR/\ZZ$ acting by rotations on  $S^1$, which is isomorphic to the pair groupoid structure on $S^1\times S^1$. Prop. \ref{prop:norsubG} and Rem. \ref{rem:hypo} state that it is isomorphic to $\CH(\cF_M)$.}
\end{ex}

\appendix
\section{Appendix}
 
\subsection{{A lemma about generating sets}}

We  prove the following statement, {which will be used in Appendix \ref{app:thmXi}}. 
{Recall that $\widehat{\cF}$ denotes the global hull of $\cF$, see Def. \ref{defi:globalhull}.}

\begin{lem}\label{lem:projgen2}
	Let $\pi : P \to M$ be a {surjective} submersion with connected fibers. Let $\cF$ be a singular foliation on $P$, such that $[\Gamma_c(\ker d\pi),\CF] \subset \Gamma_c(\ker d\pi)+\CF$. 
	
	i) The set
	\begin{align*}
	& {\widehat{\cF}}^{proj}:=\{X\in {\widehat{\cF}}: X\text{ is $\pi$-projectable to a vector field on $M$}\}  
	\end{align*}
  generates $\cF$ as a ${C^{\infty}_c(P)}$-module. 

ii) The singular foliation $\CF_M$  on $M$ introduced in Thm. \ref{thm:sur.hol} admits the following description:
$$\cF_M=\pi_*(\cF):=Span_{{C_c^{\infty}(M)}}\{\pi_*X : X\in {\widehat{\cF}}^{proj}\}$$
\end{lem}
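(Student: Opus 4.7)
The proof has two parts: part (ii) will follow fairly quickly from part (i) together with the alternative description of the pullback foliation $\cF^{big}=\pi^{-1}(\cF_M)$ as spanned by projectable vector fields. The core difficulty is therefore part (i).

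For (i), the statement is local on $P$: by local finite generation of $\cF$ and a partition of unity, it suffices to show that near any $p_0\in P$, every $X\in\cF$ is a $C^\infty_c(P)$-combination of elements of $\widehat{\cF}^{proj}$. Fixing adapted coordinates $(x,y)=(x^1,\dots,x^m,y^1,\dots,y^k)$ in which $\pi(x,y)=x$ and $\ker d\pi=\mathrm{span}(\partial/\partial y^j)$, the central tool is a Taylor-type decomposition in the vertical variables,
\[
X(x,y) \;=\; X(x,0) \;+\; \sum_{j=1}^k y^j \int_0^1 [\partial/\partial y^j, X](x,ty)\,dt.
\]
The leading term $X^{(0)}(x,y):=X(x,0)$ has $y$-independent coefficients and is thus $\pi$-projectable. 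Multiplying the bracket hypothesis by compactly supported cutoffs shows that $[\partial/\partial y^j,X]$ lies, as a vector field on the chart, in the global hull of $\cF^{big}=\Gamma_c(\ker d\pi)+\cF$; splitting each bracket as an $\cF$-part plus a vertical part and iterating the decomposition on the $\cF$-part allows one, after finitely many steps (using local finite generation of $\cF$), to express $X$ as a $C^\infty_c$-combination of projectable vector fields verified to lie in $\widehat{\cF}$.

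For (ii), the inclusion $\pi_*(\cF)\subset \cF_M$ goes as follows: given $Y\in\widehat{\cF}^{proj}$ and $g\in C^\infty_c(M)$, multiplying $Y$ by a cutoff of the form $(g\circ\pi)\chi$ (with $\chi\in C^\infty_c(P)$ a vertical bump equal to $1$ on a slice) produces an element of $\cF\subset \cF^{big}=\pi^{-1}(\cF_M)$. Applying $d\pi$ and unwinding the definition of $\pi^{-1}(\cF_M)$ then forces $g\cdot\pi_*Y\in\cF_M$, and a partition of unity on $M$ concludes. For the reverse inclusion $\cF_M\subset \pi_*(\cF)$, the characterization of pullback foliations says that every $W\in\cF_M$ is the projection of a projectable $Z\in\widehat{\cF^{big}}$; writing (locally, after cutoff) $Z=V+X$ with $V\in\Gamma_c(\ker d\pi)$ and $X\in\cF$, the verticality of $V$ forces $X$ itself to be projectable with $\pi_*X=\pi_*Z=W$. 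Part (i) then refines $X$ to a combination of elements of $\widehat{\cF}^{proj}$, yielding $W\in\pi_*(\cF)$.

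\textbf{Main obstacle.} The technical heart of the argument is the iteration in part (i): the bracket hypothesis only places correction terms in $\widehat{\cF^{big}}$ a priori, not in $\widehat{\cF}$. One must carefully track the ``$\cF$-part'' of each residue versus its vertical part, and exploit local finite generation of $\cF$ to conclude that the ``frozen'' projectable vector fields produced by the Taylor expansion actually lie in $\widehat{\cF}$.
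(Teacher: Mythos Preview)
Your plan for part (i) has a genuine gap. The Taylor decomposition produces remainder terms of the form $\int_0^1 [\partial_{y^j},X](x,ty)\,dt$, and the bracket hypothesis tells you only that $[\partial_{y^j},X]$ lies (locally) in $\cF^{big}=\Gamma_c(\ker d\pi)+\cF$. But the operation ``evaluate at $(x,ty)$ and integrate in $t$'' is \emph{not} $C^\infty(P)$-linear, so there is no reason the integrated remainder lies in $\widehat{\cF^{big}}$, let alone in $\widehat{\cF}$: if $[\partial_{y^j},X]=\sum_l g^l Z_l+V$ with $Z_l$ local generators of $\cF$, the integral produces terms $\int g^l(x,ty)Z_l(x,ty)\,dt$ which are not $C^\infty$-combinations of the $Z_l$'s. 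For the same reason, the ``frozen'' leading term $X^{(0)}(x,y)=X(x,0)$ is projectable, but nothing forces it into $\widehat{\cF}$; you only know $X^{(0)}=X-\sum y^jR_j$ with $X\in\cF$ and the second sum in some uncontrolled space. Your appeal to local finite generation does not close this: iterating the expansion on the ``$\cF$-parts'' reproduces the same issue at every stage and does not terminate.

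The paper sidesteps all of this with a structural argument that avoids any Taylor analysis. The key observation is that for the enlarged foliation $\cF^{big}=\Gamma_c(\ker d\pi)+\cF$ one already knows (from the definition of pullback foliation, since $\cF^{big}=\pi^{-1}\cF_M$) that $\widehat{\cF^{big}}{}^{proj}$ generates $\cF^{big}$. So any $X\in\cF\subset\cF^{big}$ can be written $X=\sum f_jY_j$ with $Y_j\in\widehat{\cF^{big}}{}^{proj}$. Decompose each $Y_j=\widehat{Y}_j+Z_j$ with $Z_j\in\Gamma(\ker d\pi)$ and $\widehat{Y}_j\in\widehat{\cF}$ (this decomposition exists because $\widehat{\cF^{big}}=\Gamma(\ker d\pi)+\widehat{\cF}$, via a partition of unity), and note that $\widehat{Y}_j=Y_j-Z_j$ is automatically projectable. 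Then $X=\sum f_j\widehat{Y}_j+\sum f_jZ_j$, and the second sum, being simultaneously vertical and equal to $X-\sum f_j\widehat{Y}_j\in\cF$, lies in $\widehat{\cF}^{proj}$. This gives the generation statement in one step. Part (ii) then follows immediately from $\pi_*\cF=\pi_*\cF^{big}$ and uniqueness in Prop.~\ref{prop:submfol}, which is shorter than your two-inclusion argument (though yours is also fine).
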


\begin{proof}
	
We first make a claim.

\noindent{\bf Claim:} \emph{{Lemma \ref{lem:projgen2} holds in the special case that $\Gamma_c(\ker d\pi)\subset \CF$.}}

\noindent{Indeed, in this special case, by Prop. \ref{prop:submfol} there is  a unique singular foliation  $\cF_M$ on $M$ with $\pi^{-1}(\cF_M) = \cF$.
Given this, i) is a consequence of Definition \ref{def:pullback}. For ii), note that $\pi^{-1}(\pi_*(\cF))=\CF$, as can be checked using i). Since} $\CF=\pi^{-1}\CF_M$, we obtain
{$\cF_M=\pi_*(\cF)$ by the uniqueness statement in Prop. \ref{prop:submfol}. This proves the claim.} 

\smallskip
	Take $\CF^{\text{big}}:=\Gamma_c(\ker d\pi)+\CF$, a singular foliation satisfying the condition of the above claim.	{We now proceed to prove the two items of the lemma.}
	
i)  {By the claim}, 	 ${\widehat{\cF^{\text{big}}}}^{proj}$
 generates $\cF^{\text{big}}$ as a ${C^{\infty}_c(P)}$-module.
 Take $X\in \CF\subset \CF^{\text{big}}$.
There exist finitely many $Y_j\in {\widehat{\cF^{\text{big}}}}^{proj}$ and $f_j\in \CI_c(P)$ such that
$X= \sum_j f_j Y_j$.
	By definition of $\CF^{\text{big}}$, we can write $Y_j= \widehat{Y}_j+Z_j$ with $\widehat{Y}_j\in {\widehat{\cF}}^{proj}$  and $Z_j\in \Gamma(Ker(d\pi))$. Then:
	\[X=\sum_j {f_j} Y_j= \sum_j {f_j} \widehat{Y}_j + \sum_j {f_j} Z_j. \]
The last term $\sum_j {f_j} Z_j=X-\sum_j {f_j} \widehat{Y}_j$   lies in {$\cF$ as the difference of two elements of $\cF$, and is $\pi$-projectable (to the zero vector field on $M$). Hence this last term lies} in ${\widehat{\cF}}^{proj}$, and we have proven i).

ii) {We have $\pi^{-1} (\pi_* (\CF))= \pi^{-1}( \pi_* (\CF^\text{big}))=\CF^\text{big}$, using the claim in the second equality, and $\CF^\text{big}=\pi^{-1}(\CF_M)$ by definition.}    The uniqueness in Proposition \ref{prop:submfol} implies that $\CF_M =\pi_* (\CF)$.
\end{proof}

\subsection{Proof of Thm. \ref{thm:sur.hol}}\label{app:thmXi}

The following proposition is a special case\footnote{This special case is obtained from \cite[Prop. D.4]{SingSub}  taking $\cB_1$ to be the singular foliation $\cF$, $G_i$ to be the pair groupoid $M_i\times M_i$ and $F:=\pi\times\pi$.} 
 of \cite[Prop. D.4]{SingSub}, {augmented with the statement that  $\Xi$ is an open map.}

\begin{prop}\label{prop:morphfol}
Let  $\pi \colon P\to M$ be a surjective submersion. 

Let $\cF$ be a singular foliation on $P$, and assume that it satisfies the following condition:
\begin{align}\label{star}
& {\widehat{\cF}}^{proj}:=\{X\in {\widehat{\cF}}: X\text{ is $\pi$-projectable to a vector field on $M$}\}\\
&\text{generates $\cF$ as a ${C^{\infty}_c(P)}$-module.}\nonumber
\end{align}
Let  $\pi_*(\cF):=Span_{{C_c^{\infty}(M)}}\{\pi_*X : X\in {\widehat{\cF}}^{proj}\}$, which is a singular foliation on $M$.

Then there is a canonical, {open,} surjective morphism of topological groupoids $\Xi \colon \Ho(\cF)\to {\Ho(\pi_*(\cF))}$ covering $\pi$.\end{prop}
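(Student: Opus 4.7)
The overall strategy is to build compatible atlases for $\cF$ and $\pi_*\cF$ so that $\Xi$ arises as the identity at the bisubmersion level. Hypothesis \eqref{star} is exactly what is needed to guarantee that projecting a path holonomy bisubmersion for $\cF$ via $\pi$ yields a bisubmersion for $\pi_*\cF$, and the rest of the statement will then fall out more or less formally.

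The key technical step is the following. At any $p_0 \in P$, I would pick $X_1, \ldots, X_k \in \widehat{\cF}^{proj}$ whose classes in the fibre $\cF_{p_0}$ form a basis; such $X_i$ exist because, by \eqref{star}, $\widehat{\cF}^{proj}$ generates $\cF$. Set $Y_i := \pi_*X_i$; these generate $\pi_*\cF$ near $\pi(p_0)$ by definition. Let $(U, \bt, \bs) \subset \RR^k \times P$ be the path holonomy bisubmersion for $\cF$ produced from the $X_i$ by Prop. \ref{prop:path.hol.bi}. I would then show that $(U, \pi\circ\bt, \pi\circ\bs)$ is a bisubmersion for $\pi_*\cF$: the submersivity of $\pi\circ\bs$ and $\pi\circ\bt$ follows because $\pi$ intertwines the flow of $\sum v_i X_i$ with that of $\sum v_i Y_i$, and the pullback identity
$$(\pi\circ\bs)^{-1}(\pi_*\cF) = (\pi\circ\bt)^{-1}(\pi_*\cF) = \Gamma_c(\ker d(\pi\circ\bs)) + \Gamma_c(\ker d(\pi\circ\bt))$$
follows by combining the corresponding identity for $(U,\bt,\bs)$ with \eqref{star}, which ensures enough projectable vector fields are available. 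Let $\cS$ be the family of all such projectable path holonomy bisubmersions; it covers $P$, so the atlas $\CU$ it generates by inverses and compositions is source connected and hence computes $\Ho(\cF)$.

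I would then define $\Xi([u]) := [u]_{\pi\CU}$, where $\pi\CU := \{(U, \pi\circ\bt, \pi\circ\bs) : U \in \CU\}$ and $[u]_{\pi\CU}$ is the class of $u$ regarded as a point of $(U, \pi\circ\bt, \pi\circ\bs)$. A short verification shows $\pi\CU$ is a source connected atlas for $\pi_*\cF$: the elements of $\pi\cS$ are path holonomy bisubmersions for $\pi_*\cF$ by the previous paragraph, and inverses and compositions in $\CU$ project under $\pi$ to bisubmersions adapted to those generated by $\pi\cS$. The map $\Xi$ is well defined because any local morphism $\mu \colon U_1 \to U_2$ of bisubmersions for $\cF$ automatically commutes with $\pi\circ\bs$ and $\pi\circ\bt$, hence is also a morphism of bisubmersions for $\pi_*\cF$ by Prop. \ref{prop:equiv2}. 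That $\Xi$ is a continuous topological groupoid morphism covering $\pi$ is then immediate from its construction on atlases.

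For surjectivity, any $\eta \in \Ho(\pi_*\cF)$ admits a representative $u \in (U, \pi\circ\bt, \pi\circ\bs)$ for some $U \in \CU$, and the class $[u] \in \Ho(\cF)$ satisfies $\Xi([u]) = \eta$. For openness, let $q_U \colon U \to \Ho(\cF)$ and $q'_U \colon U \to \Ho(\pi_*\cF)$ denote the quotient maps, both of which are open by \cite[Lemma 3.1]{MEsingfol}. By construction $\Xi \circ q_U = q'_U$, so for any open $\cO \subset \Ho(\cF)$
$$\Xi(\cO) = \bigcup_{U \in \CU} \Xi(q_U(q_U^{-1}(\cO))) = \bigcup_{U \in \CU} q'_U(q_U^{-1}(\cO)),$$
which is a union of open sets. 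The main obstacle I anticipate is the bisubmersion verification in the technical step, specifically establishing the pullback identity for $(U, \pi\circ\bt, \pi\circ\bs)$; this is where hypothesis \eqref{star} is essential, as merely the bracket condition of Prop. \ref{prop:submfol} would not suffice to furnish the necessary $\pi$-projectable generators.
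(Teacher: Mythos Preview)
Your approach is essentially the paper's: build a projectable path holonomy atlas $\CU$ for $\cF$, define $\Xi$ as the identity at the bisubmersion level, and read off the properties from the quotient maps. Two places in your sketch need more care, and on one of them the paper proceeds differently.

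First, the claim that for a composition $U = U_1 \circ \cdots \circ U_k \in \CU$ the triple $(U,\pi\circ\bt,\pi\circ\bs)$ is a bisubmersion for $\pi_*\cF$ does not follow formally from the generator case. The inclusion $\pi(U_1\circ\cdots\circ U_k) \hookrightarrow \pi U_1 \circ \cdots \circ \pi U_k$ only gives $(\pi\circ\bt)^{-1}(\pi_*\cF) = (\pi\circ\bs)^{-1}(\pi_*\cF)$; identifying this module with $\Gamma_c(\ker d(\pi\circ\bt)) + \Gamma_c(\ker d(\pi\circ\bs))$ requires an explicit computation using that $U$ is a bisubmersion for $\cF$ and that $\pi^{-1}(\pi_*\cF) = \cF + \Gamma_c(\ker d\pi)$. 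The paper carries this out in Lemma~\ref{prop:carry.diffM}~ii). (Also a minor slip: the elements of $\pi\cS$ live over $P$, so they are not literally path holonomy bisubmersions for $\pi_*\cF$, only adapted to such via $(Id_{\RR^k},\pi)$.)

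Second, your surjectivity argument assumes that every $\eta \in \Ho(\pi_*\cF)$ admits a representative in some $\pi U$ with $U \in \CU$. This would require $\pi\CU$ itself to be an atlas, in particular that $(\pi U_1)\circ(\pi U_2)$ is adapted to $\pi\CU$. But $(\pi U_1)\circ(\pi U_2)$ is the fiber product over $M$, strictly larger than $\pi(U_1\circ U_2)$ (fiber product over $P$), and Prop.~\ref{prop:morphfol} does not assume connected $\pi$-fibers, so this adaptation is not evident. The paper sidesteps the issue: it only shows that the atlas \emph{generated by} $\pi\CU$ is equivalent to a path holonomy atlas (so $\Xi$ lands in $\Ho(\pi_*\cF)$), and then argues surjectivity by observing that $\Xi(\Ho(\cF))$ is an open symmetric neighborhood of the identities which is closed under composition, hence all of the source-connected groupoid $\Ho(\pi_*\cF)$. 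That $\pi\CU$ is already an atlas is deduced \emph{a posteriori} from this surjectivity (see the proof of Cor.~\ref{cor:adpt}).
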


\begin{rem}\label{rem:appsub}
{If $\CH(\CF)$ and $\CH(\CF_M)$ are Lie groupoids, then $\Xi$ is a submersion. This follows from the proof of Prop. \ref{prop:morphfol} given below, since in that case the quotient maps from bisubmersions to holonomy groupoids are submersive.}
\end{rem}

 {We can now prove Thm. \ref{thm:sur.hol}:}

\begin{proof}[Proof of Thm. \ref{thm:sur.hol}]
Apply Prop. \ref{prop:morphfol}. Notice that 
condition \eqref{star} is satisfied by Lemma \ref{lem:projgen2} i) 
and $\pi_*\cF=\cF_M$ {by Lemma \ref{lem:projgen2} ii)}.
\end{proof}

{In order to keep this paper self-contained,    we now provide a proof for  Prop. \ref{prop:morphfol}.  It differs from the proof found in \cite[Prop. D.4]{SingSub}, in that it  allows to write down the morphism $\Xi$ more explicitly, and makes clear that $\Xi$ is an open map.}  We start with a lemma.

\begin{lem}\label{prop:carry.diffM} Let $\pi:P\fto M$ and $\cF$ be as in Prop. \ref{prop:morphfol}, and { $\cF_M :=\pi_*(\cF)$}.  Then there exists a family of path holonomy bisubmersions $\cS$ for $\cF$, {so that $\cup_{U\in \cS}\bs(U)=P$,} with these properties:  

{\begin{itemize}

	\item[i)]  For any $U\in \cS$ we have that $(U,\pi \circ\bt, \pi\circ\bs)$ is a bisubmersion for $\CF_M$. Further it is adapted to a path holonomy bisubmersions for $\cF_M$.
	
	\item[ii)] 
{	Let $\CU$ be the atlas generated by $\CS$. Then for any $U\in \CU$ we have that $(U,\pi \circ\bt, \pi\circ\bs)$ is a  (source connected) bisubmersion for $\CF_M$. Further
	the atlas generated by $$\pi\CU:=\{(U,\pi\circ \bt,\pi\circ\bs) \st U\in\CU\},$$
is equivalent to a path-holonomy atlas for $\CF_M$.} 
\end{itemize}} 
 \end{lem}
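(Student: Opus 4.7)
The plan is to construct, for each $p_0\in P$, a path holonomy bisubmersion $U_{p_0}$ for $\cF$ using a carefully chosen basis of $\cF_{p_0}$. I would first fix $Y_1,\dots,Y_j\in\cF_M$ whose classes form a basis of $(\cF_M)_{m_0}$, where $m_0:=\pi(p_0)$, and lift each to a projectable vector field $X_i\in\widehat{\cF}^{proj}$; the existence of such a lift follows from \eqref{star}, since $\cF_M=\pi_*\cF$ is locally generated by the $\pi_*$-projections of elements of $\widehat{\cF}^{proj}$. I would then complete this partial set to a family $X_1,\dots,X_k$ whose classes give a basis of $\cF_{p_0}$, arranging that $X_{j+1},\dots,X_k\in\Gamma(\ker d\pi)\cap\widehat{\cF}^{proj}$ (vertical). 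This vertical completion is obtained by starting from an arbitrary basis completion in $\widehat{\cF}^{proj}$ and subtracting appropriate $\pi^*$-multiples of $X_1,\dots,X_j$ to zero out the $\pi$-projection of the added generators without perturbing their classes in $\cF_{p_0}$. Set $\cS:=\{U_{p_0}\}_{p_0\in P}$; this covers $P$ because $p_0\in\bs(U_{p_0})$.

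The linchpin of the construction is the claim that $\pi_*(\widehat{\cF}^{proj}\cap I_{p_0}\cF)\subset I_{m_0}\cF_M$, which guarantees both that the chosen lifts $X_1,\dots,X_j$ have linearly independent classes in $\cF_{p_0}$ and that the vertical completion above is well posed. I would prove the claim by writing $X=\sum_\gamma f_\gamma W_\gamma$ with $W_\gamma\in\widehat{\cF}^{proj}$ (available by \eqref{star}) and $f_\gamma(p_0)=0$, picking a local section $\sigma\colon M\to P$ of $\pi$ through $p_0$, and setting $h_\gamma:=\sigma^*f_\gamma$. Using projectability of each $W_\gamma$, the pointwise identity $\pi_*X(m)=\sum_\gamma h_\gamma(m)\,\pi_*W_\gamma(m)$ holds for $m$ near $m_0$, hence $\pi_*X=\sum_\gamma h_\gamma\,\pi_*W_\gamma$ as vector fields; since $h_\gamma(m_0)=0$, this places $\pi_*X$ in $I_{m_0}\cF_M$. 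I expect this step to be the main obstacle.

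Given the construction, part i) follows at once: verticality of $X_{j+1},\dots,X_k$ yields $\pi\circ\exp(\sum_i v_i X_i)=\exp(\sum_{i\le j} v_i Y_i)\circ\pi$, so the map $\phi\colon U_{p_0}\to V$, $(v,p)\mapsto(v_1,\dots,v_j,\pi(p))$, where $V$ is the path holonomy bisubmersion for $\cF_M$ at $m_0$ built from $Y_1,\dots,Y_j$, is a submersion intertwining the projected source and target. Thus $(U_{p_0},\pi\circ\bt,\pi\circ\bs)$ is realized as the pullback of $V$ under $\phi$, hence a bisubmersion for $\cF_M$ and adapted to $V$ via $\phi$.

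For part ii), I would first establish the general principle that, under \eqref{star}, the assignment $(U,\bt,\bs)\mapsto(U,\pi\circ\bt,\pi\circ\bs)$ sends a bisubmersion for $\cF$ to a bisubmersion for $\cF_M$. The nontrivial inclusion $(\pi\circ\bs)^{-1}(\cF_M)\subset\Gamma_c(\ker d(\pi\circ\bs))+\Gamma_c(\ker d(\pi\circ\bt))$ is obtained by decomposing $Z$ in the LHS as $(Z-\sum_i f_i \tilde X_i)+\sum_i f_i \tilde X_i$, where $\tilde X_i\in\CX(U)$ is a $\bs$-lift of a projectable lift $X_i\in\widehat{\cF}^{proj}$ of the generator $Y_i$ appearing in $d(\pi\circ\bs)(Z)=\sum_i f_i\,Y_i\circ(\pi\circ\bs)$. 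The first summand lies in $\Gamma_c(\ker d(\pi\circ\bs))$ by construction, and the second in $\bs^{-1}(\cF)=\Gamma_c(\ker d\bs)+\Gamma_c(\ker d\bt)\subset\Gamma_c(\ker d(\pi\circ\bs))+\Gamma_c(\ker d(\pi\circ\bt))$. Applying this principle to the inverses and compositions that generate the atlas $\cU$ from $\cS$ yields the bisubmersion statement for $\pi\cU$. Source connectedness is preserved under inverses and compositions of source connected bisubmersions, and the equivalence of the atlas generated by $\pi\cU$ with a path holonomy atlas for $\cF_M$ follows from part i) together with the fact that adaptation to a path holonomy atlas is preserved under inverses and compositions.
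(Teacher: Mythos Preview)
Your construction for part~i) is correct and in fact more delicate than what the paper does: by arranging the extra generators $X_{j+1},\dots,X_k$ to be vertical you obtain a genuine morphism of bisubmersions $\phi\colon U_{p_0}\to V$ onto a \emph{path holonomy} bisubmersion for $\cF_M$. The paper instead takes any minimal family of projectable generators $X_1,\dots,X_n\in\widehat{\cF}^{proj}$, without splitting off a vertical part, and uses the map $(Id_{\RR^n},\pi)$ to the bisubmersion $W$ built from $\pi_*X_1,\dots,\pi_*X_n$ (which need not be a minimal generating set of $(\cF_M)_{m_0}$), invoking \cite[Lemma~2.3]{AndrSk}. Your route is a bit longer but yields a cleaner target; both work.

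Part~ii), however, has a genuine gap. The ``general principle'' you announce---that for \emph{any} bisubmersion $(U,\bt,\bs)$ for $\cF$ the triple $(U,\pi\circ\bt,\pi\circ\bs)$ is a bisubmersion for $\cF_M$---is false. The paper records a counterexample immediately after this lemma: take $P=\RR^2$, $\cF=0$, $\pi$ the first projection, and $(P,Id,\phi)$ for a diffeomorphism $\phi$ not preserving the $\pi$-fibers. Your argument only establishes the inclusion $(\pi\circ\bs)^{-1}(\cF_M)\subset\Gamma_c(\ker d(\pi\circ\bs))+\Gamma_c(\ker d(\pi\circ\bt))$ (and its $\bt$-analogue by symmetry); the reverse inclusion, which you treat as trivial, is precisely what fails. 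Concretely, there is no reason why a section of $\ker d(\pi\circ\bt)$ should lie in $(\pi\circ\bs)^{-1}(\cF_M)$, equivalently no reason why $(\pi\circ\bs)^{-1}(\cF_M)=(\pi\circ\bt)^{-1}(\cF_M)$, from the bisubmersion axioms for $\cF$ alone.

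What rescues the situation for $U=U_1\circ\cdots\circ U_k\in\cU$ is the extra structure coming from part~i): the tautological inclusion $\omega\colon U\hookrightarrow \pi U_1\circ\cdots\circ\pi U_k$ intertwines $(\pi\circ\bs,\pi\circ\bt)$ with the source and target of a \emph{known} bisubmersion for $\cF_M$ (a composition of those produced in i)). This immediately gives $(\pi\circ\bs)^{-1}(\cF_M)=(\pi\circ\bt)^{-1}(\cF_M)$ on $U$, and once that equality is in hand, your computation (or the paper's, which rewrites the common module as $\bt^{-1}(\ker_c d\pi)+\bs^{-1}(\ker_c d\pi)$) goes through. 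So the fix is not to prove a general principle but to exploit the commutative diagram with $\omega$ for the specific bisubmersions in $\cU$.
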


\begin{proof}
 
\noindent{\bf Claim:} \emph{$\cF$ is locally generated by {\emph{finitely many}} $\pi$-projectable vector-fields in $\widehat{\cF}$.}

Indeed, for any $p\in P$ there is a neighborhood $V_0\subset P$ and finitely many generators ${Y_1,\dots,Y_k}\in \CX(V_0)$  of $\iota_{V_0}^{-1}(\cF)$,  for $\iota_{V_0}$ the inclusion. Take any precompact open set  $V\subset V_0$ containing $p$ and $\rho_V\in \CI_c(V_0)$ such that $\rho_V=1$ on $V$. {For each $i$, since $\rho_V Y_i\in \cF$, condition \eqref{star} assures that} there is a finite number of $\pi$-projectable elements $X^j_i\in {\widehat{\cF}}^{proj}$
{and $f^j_i\in \CI_c(P)$}
		 such that:
		\[\rho_V Y_i=\sum_j {f^j_i} X^j_i.\]
{Therefore} every element of $\iota^{-1}_V(\cF)$ is a $\CI_c(V)$-linear combination of the $X^j_i$, which are $\pi$-projectable {and lie in $\widehat{\cF}$}. {This proves the claim.}

{i)} Now, for every point $p_0$ of $P$, take a minimal set of $\pi$-projectable elements
$\{X_1,\dots,X_n\}$ in $\widehat{\cF}$ that are local generators of $\cF$ nearby that point.
 Let $(U,\bt,\bs)$ be the corresponding  {path-holonomy}  bisubmersion, where $U\subset \RR^n\times P$. Then 
 $(U,\pi\circ\bt,\pi\circ\bs)$ is a bisubmersion for $\cF_M$,
 with source map $(\lambda,p)\mapsto \pi(p)$ and target map $(\lambda,p)\mapsto 
 \exp_{\pi(p)}(\sum \lambda_i \pi_*X_i).$
 A way to see this is to apply \cite[Lemma 2.3]{AndrSk} to the {path-holonomy}  bisubmersion $W   \subset {\RR^n\times M}$ for $\cF_M$ corresponding to the generators $\{\pi_*X_1,\dots,\pi_*X_n\}$ and to the submersion\footnote{{More precisely, to its  restriction to  $(Id_{\RR^n},\pi)^{-1}(W)\cap U$.}} $(Id_{\RR^n},\pi)\colon \RR^n\times P\to \RR^n\times M$.  We observe that $(Id_{\RR^n},\pi)$ is a morphism of bisubmersions from $(U,\pi\circ\bt,\pi\circ\bs)$  to $W$. This shows that the former bisubmersion is adapted (see Def. \ref{def:atlas.bi}) to the latter.

 {ii) Take $U\in \CU$, without loss of generality assume $U= U_1\circ \dots \circ U_k$ for $U_i\in \CS$. Denote $\pi U_i :=(U_i, \pi\circ \bt,\pi\circ\bs)$, which are bisubmersions for $\CF_M$  by i) . Note that the inclusion map $\omega\colon	U_1\circ \dots \circ U_k\fto \pi U_1\circ \dots \circ \pi U_k$ makes the following diagram commute:} 
 \begin{equation}\label{diag:piu}
\begin{tikzcd}
	U \ar[d, shift right=.2em, swap, "\bt"]\ar[d,shift left=.2em,"\bs"] \ar[r,"\omega"] & \pi U_1\circ \dots \circ \pi U_k \ar[d, shift right=.2em, swap,"\widetilde{\bt}"]\ar[d,shift left=.2em,"\widetilde{\bs}"] \\
	P \ar[r,"\pi"] & M
	\end{tikzcd} 
\end{equation}
{Because of this commutative diagram and since $\pi U_1\circ \dots \circ \pi U_k$ is a bisubmersion for $\CF_M$ one gets that
\begin{equation}\label{eq:igual1}
A:= (\pi\circ \bt)^{-1}\CF_M=(\pi\circ \bs)^{-1}\CF_M.
\end{equation}
}

{The l.h.s. is 
\begin{align*}
(\pi\circ \bt)^{-1}\CF_M=\bt^{-1}(\CF+\ker_c(d\pi))&=\bt^{-1}(\CF)+\bt^{-1}(\ker_c(d\pi))\\&=\ker_c(d\bs)+\ker_c(d\bt)+\bt^{-1}(\ker_c(d\pi))\\
&=\ker_c(d\bs)+ \bt^{-1}(\ker_c(d\pi)),
\end{align*}
using respectively that $\pi^{-1}\CF_M=\CF+\ker_c(d\pi)$,
that $\bt$ is a submersion, and that   $U$ is a bisubmersion for $\CF$. Here we use the short-hand notation $\ker_c(d\pi):=\Gamma_c(\ker(d\pi))$. Repeating for the r.h.s., from eq. \eqref{eq:igual1} we obtain
\[A=\ker_c(d\bs) +\bt^{-1}(\ker_c(d\pi))= \ker_c(d\bt)+\bs^{-1}(\ker_c(d\pi)).\]
This implies that 
\[A= \bt^{-1}(\ker_c(d\pi))+\bs^{-1}(\ker_c(d\pi))= \ker_c(d(\pi\circ\bt))+\ker_c(d(\pi\circ\bs)),\]
i.e. that $(U,\pi\circ\bt,\pi\circ\bs)$ is a bisubmersion for $\CF_M$. 
}

{
By construction, the bisubmersion $\pi U_1\circ \dots \circ \pi U_k$ lies in   the atlas for $\CF_M$ generated by $\pi\CS$.
Thus the commutative diagram \eqref{diag:piu} shows that $(U,\pi\circ\bt,\pi\circ\bs)$ is adapted to the atlas for $\CF_M$ generated by $\pi\CS$, which in turn  is adapted to 
  a path-holonomy atlas by i). This shows that the atlas generated by $\pi\CU$ is adapted to 
  a path-holonomy atlas for $\CF_M$. It is actually equivalent to such an atlas, because a path-holonomy atlas is adapted to any other atlas.
}

\end{proof}

\begin{rem}
	Not every bisubmersion $(U,\bt,\bs)$ for $\cF$ satisfies that $(U,\pi\circ\bt,\pi\circ\bs)$ is a bisubmersion for $\cF_M$. For instance take $P:=\RR^2$, $\cF=0$ and $M=\RR$ with map $\pi\colon P\fto M$ given by the first projection. Then $\cF_M=0$. Now take any diffeomorphism $\phi\colon P\fto P$  that does not preserve the foliation $\pi^{-1}(\cF_M)$ 
  by the fibers of $\pi$. Then $(P,Id, \phi)$ is a bisubmersion for $\cF$ but $(P,\pi,\pi\circ \phi)$ is not a bisubmersion for $\cF_M$.
\end{rem}

\begin{proof}[Proof of Prop. \ref{prop:morphfol}:] 
{
{Let $\CU$ be an atlas for $\CF$ as in Lemma \ref{prop:carry.diffM}. For every $U\in \CU$ we use the   short-hand notation $\pi U$ to denote 
  $(U,\pi \circ\bt, \pi\circ\bs)$, a bisubmersion for $\CF_M$.
Define $\Xi$ as follows:}
\begin{equation}\label{eq:Xidescr2}
\Xi \colon \CH(\CF)\fto \CH(\CF_M);	[u]\mapsto [u]_M,
\end{equation}
where {$u\in U \in \CU$, and}
$[u]_M$ is the class of $u$ {seen as an element of} $\pi U\in \pi \CU $.  {
Here we used that, by Lemma \ref{prop:carry.diffM}, $\CH(\CF_M)$ agrees with the groupoid associated to the atlas generated by $\pi \CU$.}
}

{
The map $\Xi$ is well defined. If $u_1\in U_1\in \CU$ and $u_2\in U_2\in \CU$ are equivalent, then there exists a morphism of bisubmersions sending $u_1$ to $u_2$. Using the same morphism it is clear that $u_1\in \pi U_1\in \pi\CU$ is equivalent to $u_2\in \pi U_2\in \pi \CU$, {i.e. that $[u_1]_M=[u_2]_M$}. {The same argument shows that $\Xi$ is independent of the specific choice of the atlas $\CU$, thus canonical.}
}

{
 It is   clear that $\Xi$ covers $\pi$ and sends the identity bisection of $\CH(\CF)$ to the identity bisection of $\CH(\CF_M)$.
To prove that it is a morphism of set theoretic groupoids we only need to prove that it preserves the composition. It does because for any $U_1,U_2\in \CU$, the inclusion map $\pi(U_1\circ U_2)\fto \pi U_1\circ \pi U_2$ 
is a morphism of bisubmersions for $\CF_M$.
}

{
We  check that $\Xi$ is a continuous open map. 
This holds 
because in the following commutative diagram the quotient maps $Q$ and $Q_M$ are continuous and   open  (see \cite[Lemma 3.1]{MEsingfol}), and because $Q$ is surjective.}

 \begin{equation*} 
\begin{tikzcd}
	\sqcup_{U\in \CU}U \ar[d,   "Q"]  \ar[r,"Id"] & \sqcup_{U\in \CU} \pi U \ar[   "Q_M",d] \\
\CH(\CF) \ar[r,"\Xi"] & \CH(\CF_M)
	\end{tikzcd} 
\end{equation*}

{
The map $\Xi$ is surjective. By the above, $\Xi(\CH(\CF))$ is a neighborhood of the identities of $\CH(\CF_M)$. Because $\Xi$ is a morphism of topological groupoids, $\Xi(\CH(\CF))$ is a symmetric set closed under compositions. It is well known that any $\bs$-connected topological groupoid is generated by any symmetric neighborhood of the identities \cite{MK2}. Because $\CH(\CF_M)$ is $\bs$-connected we obtain $\Xi(\CH(\CF))=\CH(\CF_M)$, i.e. $\Xi$ is surjective.
}
\end{proof}

  {The following corollary extends the conclusions of Lemma \ref{prop:carry.diffM} ii) to arbitrary source connected atlases (as defined in Def. \ref{def:scon.bisub}).}
\begin{cor}\label{cor:adpt}
	 {Let $\pi:P\fto M$, $\CF\subset \CX_c(P)$ and $\CF_M\subset\CX_c(M)$ as in Prop. \ref{prop:morphfol}. Then for any source connected atlas $\CU'$  for $\CF$ we have that $\pi\CU':=\{\pi U':U'\in \CU'\}$ is an atlas equivalent  to a path holonomy atlas for $\CF_M$.}  
\end{cor}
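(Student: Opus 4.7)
The strategy is to reduce the claim to Lemma \ref{prop:carry.diffM}, which already establishes the desired equivalence for one specific source connected atlas $\CU$ for $\CF$. Since every source connected atlas produces the same groupoid (namely $\Ho(\CF)$, as recorded in the discussion following Def. \ref{def:scon.bisub}), Prop. \ref{prop:adapt} gives that $\CU$ and $\CU'$ are mutually adapted.

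The plan is then to fix $u' \in U' \in \CU'$ and, using the adaptation of $\CU'$ to $\CU$, pick an open neighborhood $V \subset U'$ of $u'$, an element $U \in \CU$, and a morphism of $\CF$-bisubmersions $\mu \colon V \to U$ sending $u'$ to some $u \in U$. Because $\mu$ is a morphism between bisubmersions for the same singular foliation, \cite[Prop. 2.10]{AndrSk} ensures that $\mu$ is a submersion onto an open subset of $U$. Since $\pi U$ is a bisubmersion for $\CF_M$ by Lemma \ref{prop:carry.diffM} ii), \cite[Lemma 2.3]{AndrSk} applied to the submersion $\mu$ yields that $(V, \pi \circ \bt_{U'}|_V, \pi \circ \bs_{U'}|_V)$ is a bisubmersion for $\CF_M$, and $\mu$ becomes a morphism of $\CF_M$-bisubmersions from $\pi V$ to $\pi U$. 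Since $u' \in U'$ was arbitrary, this shows that $\pi U'$ is itself a bisubmersion for $\CF_M$ and that $\pi\CU'$ is adapted to $\pi\CU$, hence to a path holonomy atlas for $\CF_M$ by Lemma \ref{prop:carry.diffM} ii).

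The converse adaptation follows along the same lines: swapping the roles of $\CU$ and $\CU'$ in the above argument shows that $\pi \CU$ is adapted to $\pi \CU'$. Combined with the fact that a path holonomy atlas for $\CF_M$ is adapted to $\pi\CU$, composing adaptations yields that the path holonomy atlas is adapted to $\pi\CU'$ as well. Thus $\pi\CU'$ and any path holonomy atlas for $\CF_M$ are mutually adapted, giving the claimed equivalence.

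The main technical ingredient is the submersivity of the local morphism $\mu$, which is what allows us to invoke \cite[Lemma 2.3]{AndrSk} in order to transport the $\CF_M$-bisubmersion structure along $\mu$. This is the step where the hypothesis that both $\CU$ and $\CU'$ are atlases for the \emph{same} foliation $\CF$ enters in an essential way, since it is precisely for morphisms between $\CF$-bisubmersions that submersivity comes for free.
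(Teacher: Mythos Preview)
Your overall strategy matches the paper's: reduce to the special atlas $\CU$ of Lemma~\ref{prop:carry.diffM} via the mutual adaptation of source connected atlases, and transport the $\CF_M$-bisubmersion property along the local morphisms witnessing the adaptation. However, the central technical step contains a genuine gap.

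You assert that \cite[Prop.~2.10]{AndrSk} forces the morphism $\mu\colon V\to U$ to be a submersion. This is false: morphisms of bisubmersions for the same foliation are \emph{not} automatically submersions. A simple counterexample: take $P=\RR$ with the full foliation, $U_1=\RR^2$ and $U_2=\RR^3$, both with first and second projection as source and target; then $(x,y)\mapsto (x,y,0)$ is a morphism of bisubmersions which is not a submersion. What \cite[Prop.~2.10 b)]{AndrSk} actually provides is a submersive morphism \emph{under a dimension hypothesis} (essentially $\dim U_1\ge \dim U_2$). Since the $U\in\CU$ you land in is a finite composition of path holonomy bisubmersions, its dimension may well exceed $\dim V$, and then no map $V\to U$ can be a submersion. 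Consequently your appeal to \cite[Lemma~2.3]{AndrSk} is unjustified, and the argument that $\pi V$ is a bisubmersion for $\CF_M$ collapses.

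The paper avoids this pitfall by a different mechanism. Rather than trying to pull back the $\CF_M$-bisubmersion structure along $\mu$ via \cite[Lemma~2.3]{AndrSk}, it uses the commutative square relating $(U'_u,\bt,\bs)$ and $(U,\pi\bt,\pi\bs)$ only to obtain the equality $(\pi\circ\bt_{U'})^{-1}\CF_M=(\pi\circ\bs_{U'})^{-1}\CF_M$ on $U'_u$ (this is the equation~\eqref{eq:igual1} step, which needs only that $\omega_u$ is transverse to the pullback foliation---automatic since $\pi\circ\bt_{U'}$ is a submersion---not that it is a submersion itself). It then runs the algebraic manipulation following diagram~\eqref{diag:piu}, using directly that $U'_u$ is a bisubmersion for $\CF$ and that $\pi^{-1}\CF_M=\CF+\Gamma_c(\ker d\pi)$, to produce the second bisubmersion identity. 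Your argument can be repaired along similar lines (replace ``submersion'' by ``transverse to $(\pi\bt_U)^{-1}\CF_M$'', which does follow from $\pi\bt_{U'}=\pi\bt_U\circ\mu$ being a submersion), or alternatively by routing the morphism through a \emph{single} path holonomy bisubmersion of minimal dimension, where the dimension hypothesis of \cite[Prop.~2.10 b)]{AndrSk} is guaranteed.
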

\begin{proof}
  {We first observe that in Lemma \ref{prop:carry.diffM}, actually $\pi\CU$ is already an atlas. This follows from the fact that the map $\Xi$, given as in eq. \eqref{eq:Xidescr2}, is surjective.}

  {Now let $\CU'$ be any source connected atlas for $\CF$. Then $\CU'$ is adapted to $\CU$ (See \cite[\S 4]{T.ALF}). This means that for any element $u\in U'\in \CU'$ 
there exists a morphism of bisubmersions $\omega_u$ from a 
neighborhood $U'_u\subset U'$ to a bisubmersion $U\in \CU$. In particular the following diagram commutes:} 
	\[\begin{tikzcd}
	U'_u \ar[d, shift right=.2em, swap, "\bt"]\ar[d,shift left=.2em,"\bs"] \ar[r,"\omega_u"] & U \ar[d, shift right=.2em, swap,"\pi\circ\bt"]\ar[d,shift left=.2em,"\pi\circ\bs"] \\
	P \ar[r,"\pi"] & M
	\end{tikzcd}\]
	{The triple $(U'_u,\bt\circ\pi,\bs\circ\pi)$ is a bisubmersion for $\CF_M$, since the argument following diagram \eqref{diag:piu} can be applied identically to the diagram above. Moreover, since being a bisubmersion is a local property we have that $\pi U':=(U',\bt\circ\pi,\bs\circ\pi)$ is a bisubmersion for $\CF_M$.}
	
{The families $\pi\CU':=\{\pi U':U'\in \CU'\}$ and $\pi \CU$ are adapted to each other, since the atlases $\CU'$ and  $\CU$ are equivalent. This implies that $\pi\CU'$ is already an atlas for $\CF_M$, equivalent to $\pi \CU$. The latter is equivalent to a path holonomy atlas by Lemma \ref{prop:carry.diffM} ii), so we are done.}
\end{proof}

\bibliographystyle{alpha}
\bibliography{Lie2bib}

\end{document}